\documentclass[11pt]{amsart}
\usepackage[T1]{fontenc}
\usepackage{lmodern}
\usepackage{amsmath,amssymb,amsthm,mathtools,mathrsfs}
\usepackage{booktabs}
\usepackage[margin=1in]{geometry}
\usepackage[hidelinks]{hyperref}

\newtheorem{theorem}{Theorem}[section]
\newtheorem{proposition}[theorem]{Proposition}
\newtheorem{lemma}[theorem]{Lemma}

\theoremstyle{remark}

\newtheorem{notation}[theorem]{Notation}

\newcommand{\C}{\mathcal C}
\newcommand{\D}{\mathcal D}
\newcommand{\ZC}{\mathcal Z(\mathcal C)}
\newcommand{\Irr}{\operatorname{Irr}}
\newcommand{\FPdim}{\operatorname{FPdim}}
\newcommand{\Tr}{\operatorname{Tr}}
\newcommand{\Gal}{\operatorname{Gal}}
\newcommand{\ord}{\operatorname{ord}}
\newcommand{\one}{\mathbf 1}
\newcommand{\Q}{\mathbb Q}
\newcommand{\Z}{\mathbb Z}
\newcommand{\CC}{\mathbb C}
\newcommand{\U}{\mathscr U}
\newcommand{\V}{\mathscr V}
\newcommand{\W}{\mathscr W}
\newcommand{\id}{\operatorname{id}}
\newcommand{\PSU}{\operatorname{PSU}}

\numberwithin{equation}{section}
\allowdisplaybreaks[1]

\newtheorem{example}[theorem]{Example}

\newcommand{\Rep}{\operatorname{Rep}}

\newcommand{\cC}{\mathcal C}
\newcommand{\cZ}{\mathcal Z(\mathcal C)}
\newcommand{\QQ}{\mathbb Q}
\newcommand{\ZZ}{\mathbb Z}
\newcommand{\FF}{\mathbb F}
\newcommand{\Vect}{\operatorname{Vec}}
\newcommand{\Hom}{\operatorname{Hom}}
\newcommand{\End}{\operatorname{End}}
\newcommand{\GL}{\operatorname{GL}}
\newcommand{\PGL}{\operatorname{PGL}}
\newcommand{\Ss}{\mathsf S}
\newcommand{\ii}{\mathrm i}

\newcommand{\ip}[2]{[#1,#2]}
\title[Grothendieck rings of near-Tambara--Yamagami categories]
{Grothendieck rings of near-Tambara--Yamagami fusion categories}
\author{Jingcheng Dong}
\address{College of Mathematics and Statistics, and Center for Applied Mathematics of Jiangsu Province, Nanjing University of Information Science and Technology, Nanjing 210044, China}
\email{jcdong@nuist.edu.cn}

\author{S\'ebastien Palcoux}
\address{Beijing Institute of Mathematical Sciences and Applications, Huairou District, Beijing 101408, China}
\email{sebastienpalcoux@gmail.com}

\author{Kai Wang}
\address{College of Mathematical Science, Yangzhou University, Yangzhou, China}
\email{wangkww@163.com}

\date{}
\subjclass[2020]{18M20, 16T05}
\keywords{Fusion category, Tambara--Yamagami category, formal codegree,
Drinfeld center, Frobenius--Schur indicator}
\begin{document}
\begin{abstract}
We determine the Grothendieck rings of fusion categories over $\CC$
containing a nonpointed Tambara--Yamagami fusion subcategory and exactly
one further simple object. The subcategory necessarily has rank five,
its invertible objects form the Klein four-group, and the coefficient
$\kappa$ of the additional simple object in its tensor square belongs
to $\{0,2,7\}$. Each of these three based rings is categorifiable.
For $\kappa=0$, every category belongs, up to tensor equivalence, to the
rank-one odd-component extension list of Choi--Lu--Sun. For $\kappa=2$
we construct an explicit unitary group-theoretical realization, and
for $\kappa=7$ a realization is $\Rep(\PSU(3,2))$.
\end{abstract}
\maketitle

\tableofcontents

\section{Introduction}
The classification of fusion categories is a structural problem at the meeting
point of representation theory, quantum topology, conformal field theory,
subfactor theory, and the mathematical theory of anyons. The Grothendieck ring
makes the fusion rules discrete and computable, but it does not determine an
associator: the pentagon equations impose additional, highly restrictive
constraints. Consequently, a fusion ring may satisfy every evident numerical
condition and still admit no categorification. Exact analyses of small and
structured families are therefore valuable both for the classification program and for producing reliable algebraic models for topological and physical constructions.

Tambara--Yamagami categories are canonical nonpointed examples in which all but
one simple object are invertible and the remaining object has Frobenius--Perron
dimension \(\sqrt{|G|}\) \cite{TY}. They are explicit enough for detailed
calculations, while already displaying irrational dimensions, nontrivial
associativity, and arithmetic constraints. Near-integral fusion categories,
formed by adjoining one simple object to a proper maximal-rank fusion
subcategory \cite{DS}, provide a natural next layer of complexity. The
near-Tambara--Yamagami case is especially well suited to a complete study: the Tambara--Yamagami subcategory supplies strong symmetry, whereas the extra simple object creates a sharp test of categorifiability. 

Following \cite{DS}, we write $\C=\C(\D,\kappa)$ for such a
near-Tambara--Yamagami category, where $\D$ is nonpointed. We denote by $G$ the group of invertible objects of $\D$, by $t$ its unique
noninvertible simple object, and by $\rho$ the unique simple object of
$\C\setminus\D$. The fusion rules are
\[
 |G|=n^2,\qquad \FPdim(t)=n,\qquad
 \rho\otimes \rho\cong \bigoplus_{g\in G}g\oplus n t\oplus\kappa \rho,
\]
so that, for $d=\FPdim(\rho)$,
\[
 d^2=\kappa d+2n^2.
\]

Our first result reduces the numerical possibilities to rank six. The
following table records the candidates produced by the formal-codegree and
indicator arguments; the irrational candidate $\kappa=8$ is ruled out
later by Theorem~\ref{r8:nonexistence}.

\begin{theorem}\label{thm:overview}
Let $\C=\C(\D,\kappa)$ be a near-Tambara--Yamagami fusion category over
$\CC$, with $\D$ nonpointed. The numerical analysis gives
\[
 n=2,\qquad G\cong C_2\times C_2,\qquad \operatorname{rank}(\C)=6,
\]
and the following candidate values:
\[
\begin{array}{c|c|c}
\kappa&d&\FPdim(\C)\\\hline
0&2\sqrt2&16\\
8&4+2\sqrt6&48+16\sqrt6\\
2&4&24\\
7&8&72
\end{array}
\]
The first two rows arise in the irrational case and the last two in the
integral case. The candidate $\kappa=8$ is impossible by
Theorem~\ref{r8:nonexistence}.
\end{theorem}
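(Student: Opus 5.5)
The plan is to work entirely at the level of the based ring $K(\C)$ and its complexified character theory, in four steps. Step 1 computes the characters and formal codegrees of $K(\C)$. Step 2 uses divisibility of $\FPdim(\C)$ by the codegrees. Step 3, expected to be the main obstacle, forces $n=2$ using Frobenius--Schur indicators, with Galois and center-based integrality as fallbacks. Step 4 solves for $\kappa$ and $d$.

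\emph{Step 1: consequences of the fusion rules.} Since $\rho$ is the unique simple object outside $\D$, it is self-dual. Because $\Hom(\rho\otimes\rho,g)\neq 0$ for every $g\in G$, we get $g\otimes\rho\cong\rho$. Comparing dimensions then gives $t\otimes\rho\cong n\rho$. Now let $\chi$ be an irreducible character of $K(\C)\otimes\CC$ with $x=\chi(\rho)$.
\begin{itemize}
\item If $x\neq 0$, then $\chi|_G$ is trivial. From $t^2=\sum_g g$ and $t\rho=n\rho$ we get $\chi(t)=n$, and $x$ is a root of $x^2=\kappa x+2n^2$. So $x\in\{d,d'\}$ with $dd'=-2n^2$ and $d+d'=\kappa$.
\item If $x=0$, then $\chi$ factors through $K(\D)$ and must satisfy $\sum_g\chi(g)+n\chi(t)=0$. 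This leaves the characters of the Tambara--Yamagami ring, except the one with $\chi|_G=1$, $\chi(t)=n$.
\end{itemize}
Computing $f_\chi=\sum_b\chi(b)\chi(b^*)$ (suitably normalized in the noncommutative case) yields the following formal codegrees:
\begin{itemize}
\item $n^2$ for the nontrivial characters of $G$, which forces $G$ abelian or else gives explicit higher-dimensional codegrees;
\item $2n^2$ for the character with $\chi|_G=1$, $\chi(t)=-n$;
\item $f_d=d(d-d')=\FPdim(\C)$ and $f_{d'}=d'(d'-d)$.
\end{itemize}

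\emph{Step 2: arithmetic from codegree divisibility.} By Ostrik's theorem, $\FPdim(\C)/f$ is an algebraic integer for each formal codegree $f$. This gives that $d^2/n^2$, $d^2/(2n^2)$ and $-d/d'=d^2/(2n^2)$ are algebraic integers. Hence $d/n$ is an algebraic integer, which yields constraints linking $n$ and $\kappa$ (e.g.\ $n\mid\kappa$ in the appropriate sense). Two cases follow:
\begin{itemize}
\item If $d\in\Z$ (integral case), we get $d\mid 2n^2$ and $2n^2\mid d^2$. Writing $d=2n^2/m$, we have $\kappa=d-m$ with $m^2\mid 2n^2$.
\item If $d\notin\Z$, then $\FPdim(\C)$ and $f_{d'}$ are Galois conjugate. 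This is compatible with the Galois action on codegrees, but the multiplicities must match, and the equation $\sum_\chi \dim\chi\,\chi(b)/f_\chi=\delta_{b,\one}$ gives an additional rational identity.
\end{itemize}
I would also use the fact that $G$ acts trivially on $\rho$, so $|G|=n^2$ divides $d^2$ in the integral case (from the $G$-grading/fixed-point argument).

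\emph{Step 3: forcing $n=2$ (the main obstacle).} The codegree data alone leave infinite families, parametrized by $m$ with $m^2\mid 2n^2$. To cut these down I would bring in Frobenius--Schur indicators, using that we may pass to the spherical/pseudounitary setting in the integral case and to the Galois-twisted one otherwise.
\begin{itemize}
\item The second indicator of $t$ in a Tambara--Yamagami category is $\pm1$ and is governed by the Gauss sum of the bicharacter on $G$.
\item Computing $\nu_2(\rho)$ through the Ng--Schauenburg formula expresses it via twists of simples of $\ZC$ lying over $\rho$.
\item The dimensions of these center objects are $\FPdim(\C)/f_\chi$. Their integrality (respectively Galois) constraints, combined with $\nu_2(t)=\pm1$ and $\nu_2(\rho)\in\{\pm1\}$, should force $n=2$ and $G\cong C_2\times C_2$, since $G\cong\Z_4$ is excluded by the indicator of $t$ with $t$ self-dual. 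Here $n$ is an integer because $t^2=\sum_g g$ with $t$ self-dual.
\end{itemize}
If the indicator route proves insufficient, I would try as a fallback the Galois action on the $T$-matrix of $\ZC$ together with the Cauchy theorem, i.e.\ that prime divisors of $\FPdim(\C)$ and of $\ord(T)$ agree.

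\emph{Step 4: solving the remaining equations.} With $n=2$ we have $d^2=\kappa d+8$ and $\FPdim(\C)=8+d^2$.
\begin{itemize}
\item Integral case: $d\mid 8$ and $8\mid d^2$ leave $d\in\{4,8\}$, giving $\kappa=d-8/d\in\{2,7\}$.
\item Irrational case: the divisibility conditions from Step 2 leave $\kappa\in\{0,8\}$, giving $d=2\sqrt2$ and $d=4+2\sqrt6$ respectively.
\end{itemize}
Computing $\FPdim(\C)=8+d^2$ in each case reproduces the table: $16$, $48+16\sqrt6$, $24$ and $72$.
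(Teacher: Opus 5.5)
There is a genuine gap. Step 3, which you correctly flag as the crux, contains no argument, and the codegree data of Step 2 come nowhere near the statement. In the integral case they are satisfied by the whole family $n=am$, $d=2a^2m$, $\kappa=m(2a^2-1)$. For instance, $(n,d,\kappa)=(3,18,17)$ and $(4,16,14)$ pass every test you list, including $n^2\mid d^2$: the first has $\FPdim(\C)=342$ and codegrees $9,18,19$, the second $\FPdim(\C)=288$ and codegrees $16,32,36$. The paper removes these with genuinely categorical input:
\begin{enumerate}
\item $m\le 2$, by comparing $1+d$ with the largest dimension $4a^2+2$ of a summand of $I(\one)$.
\item For $m=1$: a Tannakian subcategory $\Rep(K)\subset\ZC$ containing a braided Tambara--Yamagami subcategory (so $G$ is an elementary abelian $2$-group by Siehler) and a normal subgroup whose nonidentity elements form one conjugacy class. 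This leads to $2^{2s+1}+1=p^r$, hence $n=2$.
\item For $m=2$: the M\"uger centralizer of $F^{-1}(\D)$ is $\Rep(K)$ with $K$ sharply doubly transitive, which places a cyclic group of order $n^2/2$ inside $G$.
\end{enumerate}
In the irrational case, $n=2$ comes from the forgetful images of the summands of $I(\one)$ being honest objects. Their integer parts $1,1,2,4$, each containing $\one$ once, cannot absorb $n\ge4$ copies of $t$; for $n=3$ one adds self-duality of $I(\one)$. Your indicator sketch supplies none of this, and its premise that central simples over $\rho$ have dimensions $\FPdim(\C)/f_\chi$ is false outside $I(\one)$. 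Likewise, you cannot exclude $C_4$ by $\nu_2(t)$: Tambara--Yamagami categories over $\Z/4$ exist, with $t$ self-dual. What is needed is the fiber functor $X\mapsto\Hom(\rho,X\otimes\rho)$ on $\D$. Under it the operators $U_g$ form a basis of $\End(\CC^n)$, which forces $\ord(g)\le n$.

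The irrational half of Step 4 is also false as stated: divisibility does not leave $\kappa\in\{0,8\}$. With $n=2$, $\kappa=16$ ($d=8+6\sqrt2$) satisfies every algebraic-integrality condition, e.g.\ $d^2/8=1+2d$. So does $\kappa=4$ ($d=2+2\sqrt3$, $d^2/8=2+\sqrt3$), because integrality of $\kappa d/8$ only forces $8\mid\kappa^2$. To get $8\mid\kappa$ you need that $\dim A=1+(\kappa/8)d$ is the dimension of an object of $\C$, hence lies in $\Z_{\ge0}+\Z_{\ge0}d$. Cutting $\kappa=8k$ down to $k\le1$ is the main technical work of the paper. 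It combines the Galois reflection on $\ZC$ with the Frobenius--Schur twist traces of $I(t)$ and $I(\rho)$. These produce multisets of roots of unity with prescribed $\sqrt c$-coefficients in their first two power sums, and Larson's lower bound on the length of such sums then gives the contradiction for $k\ge2$. None of this is replaceable by codegree divisibility, so your proposal establishes neither $n=2$ nor the finiteness of the candidate list.
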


The irrational analysis is carried out in Section~\ref{sec:irrational}.
Formal codegrees first force $n=2$ and $\kappa$ to be a multiple of eight.
For positive multiples of eight, a Galois reflection on the Drinfeld center,
together with two Frobenius--Schur indicator traces, produces simultaneous
constraints on sums of roots of unity. A quadratic trace bound leaves only
$\kappa=8$ after the zero parameter has been separated. The subsequent
cyclotomic moment argument excludes that remaining candidate. The zero-parameter categories are covered by the rank-one
odd-component extension list of Choi--Lu--Sun \cite{CLS}.

Section~\ref{sec:rational} treats the integral case independently. An
integral parametrization gives two branches. The first branch leads to a
Tannakian subcategory and a finite group with a normal subgroup whose
nonidentity elements are conjugate; the second leads to a sharply doubly
transitive group. The first branch uses the braided Tambara--Yamagami restriction
and elementary arithmetic; the second uses the order bound on $G$.
Both force $n=2$, leaving $\kappa=7$ and $\kappa=2$, respectively.

The numerical result does not by itself classify associators. We therefore
state the categorical conclusions separately. The parameter $7$ is realized
by $\operatorname{Rep}(\PSU(3,2))$. For $\kappa=2$, Section~\ref{r2:sec:positive}
constructs an explicit unitary group-theoretical fusion category with the
fusion ring $R_2$.

For $\kappa\in\Z_{\geq0}$, let $R_\kappa$ be the based ring with
basis $G\cup\{t,\rho\}$, where $G=C_2\times C_2$, group multiplication
on $G$, and
\begin{equation}\label{eq:Rkappa}
\begin{gathered}
gt=tg=t,\qquad g\rho=\rho g=\rho\quad(g\in G),\qquad
t^2=\sum_{g\in G}g,\\
t\rho=\rho t=2\rho,\qquad
\rho^2=\sum_{g\in G}g+2t+\kappa\rho.
\end{gathered}
\end{equation}
The following theorem classifies the categorifiable based rings in the
family and records the available categorical conclusions.
\begin{theorem}\label{thm:classification}
Let $\C$ be a fusion category over $\CC$ containing a nonpointed
Tambara--Yamagami fusion subcategory $\D$ and exactly one simple object
outside $\D$. Then
\[
\operatorname{rank}(\D)=5,\qquad G\cong C_2\times C_2,\qquad
K_0(\C)\cong R_\kappa\quad\text{for }\kappa\in\{0,2,7\}.
\]
Each of these three based rings is categorifiable. More precisely:
\begin{enumerate}
\item For $\kappa=0$, every such category is tensor equivalent to an
entry in the eight-solution list of rank-one odd-component extensions
of $\Rep(H_8)$ in \cite[Section~7 and Appendix~C]{CLS}; see
Theorem~\ref{thm:kappa-zero} for the precise scope of this assertion.
\item For $\kappa=2$, Theorem~\ref{r2:prop:R2} provides an explicit
unitary group-theoretical categorification.
\item For $\kappa=7$, $\Rep(\PSU(3,2))$ is a categorification, as in
Example~\ref{example:PSU}.
\end{enumerate}
The dimensions are
\[
\begin{array}{c|c|c|c}
\kappa&\FPdim(t)&\FPdim(\rho)&\FPdim(\C)\\\hline
0&2&2\sqrt2&16\\
2&2&4&24\\
7&2&8&72.
\end{array}
\]
For $\kappa=2,7$, the assertion concerns based-ring isomorphism and
existence, not classification of all realizations up to tensor equivalence.
\end{theorem}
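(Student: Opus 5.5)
The plan is to assemble Theorem~\ref{thm:classification} from Theorem~\ref{thm:overview}, Theorem~\ref{r8:nonexistence}, and the three realization results cited in its items.

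\textbf{Step 1: reduce to the based rings $R_\kappa$.} By Theorem~\ref{thm:overview}, any such $\C$ has $n=2$ and $G\cong C_2\times C_2$, so $\operatorname{rank}(\D)=|G|+1=5$. The value of $\kappa$ lies in $\{0,8,2,7\}$, and $\kappa=8$ is excluded by Theorem~\ref{r8:nonexistence}. It remains to see that the fusion rules are exactly \eqref{eq:Rkappa}.

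The products inside $\D$ are the Tambara--Yamagami rules with $n=2$, namely $gt=tg=t$ and $t^2=\sum_g g$. The rule for $\rho^2$ is the defining relation of $\C(\D,\kappa)$ with $n=2$.

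For the mixed products, note that $g\rho$ is simple of dimension $d$ and does not lie in $\D$, so $g\rho\cong\rho\cong\rho g$. For $t\rho$, Frobenius reciprocity gives
\[
[t\rho:h]=[\rho:t^*h]=[\rho:t]=0 \quad (h\in G),\qquad [t\rho:t]=[\rho:t^*t]=0,
\]
since $t^*t=\sum_g g$ contains no $\rho$. Hence $t\rho$ is a multiple $m\rho$, and comparing dimensions gives $m=\FPdim(t)=2$. The same argument applies to $\rho t$. The multiplicity $[\rho^2:t]=2$ follows from $[\rho^2:t]=[\rho t^*:\rho]=2$, which is consistent with the $n t$ term. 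This establishes $K_0(\C)\cong R_\kappa$.

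\textbf{Step 2: the dimension table.} This is immediate from $d^2=\kappa d+8$:
\begin{itemize}
\item for $\kappa=0$, $d=2\sqrt2$;
\item for $\kappa=2$, $d=4$;
\item for $\kappa=7$, $d=8$.
\end{itemize}
In each case $\FPdim(\C)=8+d^2$, giving $16$, $24$ and $72$.

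\textbf{Step 3: categorifiability.}
\begin{itemize}
\item For $\kappa=7$, I would cite Example~\ref{example:PSU}. The irreducible characters of $\PSU(3,2)\cong (C_3\times C_3)\rtimes Q_8$ have degrees $1,1,1,1,2,8$. The quotient $Q_8$ supplies a $\Rep(Q_8)$ Tambara--Yamagami subcategory, and the degree-$8$ character $\chi$ satisfies $\chi^2=\sum(\text{linear})+2\chi_2+7\chi$, which one checks by computing inner products of characters.
\item For $\kappa=2$, I would invoke Theorem~\ref{r2:prop:R2}.
\item For $\kappa=0$, I would invoke Theorem~\ref{thm:kappa-zero}. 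This requires recognizing $\D$ as a Tambara--Yamagami category on $C_2\times C_2$ and $\C$ as a $\Z/2$-graded extension of $\D$ whose odd component is $\{\rho\}$. The grading holds because $\rho^2\in\D$ when $\kappa=0$ and $\rho\D\subseteq\Z_{\ge0}\rho$. The argument also needs $\D\simeq\Rep(H_8)$, or the appropriate Tambara--Yamagami category, as used in \cite{CLS}.
\end{itemize}
The final sentence of the theorem is only a clarification of scope.

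The main obstacle is Step~1's reliance on the exclusion of $\kappa=8$ and of $n>2$, which is all contained in the cited theorems. Within this proof itself, the delicate point is matching the $\kappa=0$ case precisely to the hypotheses of \cite{CLS}, since $\D$ could a priori be any of the four Tambara--Yamagami categories on $C_2\times C_2$. I would try to handle this by appealing to the scope statement of Theorem~\ref{thm:kappa-zero}.
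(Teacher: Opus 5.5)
Your proposal is correct and is essentially the paper's proof. Both arguments assemble the rank reduction and irrational bound, the exclusion of $\kappa=8$ (Theorem~\ref{r8:nonexistence}), the integral classification, and the three realization results. You cite the summary Theorem~\ref{thm:overview} where the paper cites Proposition~\ref{prop:rank-reduction}, Theorem~\ref{thm:main}, Theorem~\ref{thm:integral} and Lemma~\ref{lem:order-bound} directly, and you re-derive the mixed fusion rules that the paper records in \eqref{eq:fusion}.

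Your concern about which Tambara--Yamagami category on $C_2\times C_2$ occurs when $\kappa=0$ is already settled inside Theorem~\ref{thm:kappa-zero}, which proves $\D\simeq\Rep(H_8)$ via the fiber functor $\Phi$ and Masuoka's classification. Existence for $R_0$ then rests on the Choi--Lu--Sun solution list being nonempty, exactly as in the paper.
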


\section{Preliminaries}\label{sec:preliminaries}
\subsection{Pseudo-unitary fusion categories}
Throughout, let \(\mathcal C\) be a fusion category over the field of complex numbers $\mathbb{C}$.  We choose representatives
\[
    \Irr(\mathcal C)=\{X_1=\mathbf 1,X_2,\ldots,X_n\}
\]
for the isomorphism classes of simple objects, with \(\mathbf 1\) denoting the tensor unit.  They form the canonical \(\mathbb Z\)-basis of the Grothendieck ring \(K_0(\mathcal C)\), and all structure constants relative to this basis are nonnegative integers.

For simple objects \(X,Y\), write the tensor-product decomposition in the
form
\[
    X\otimes Y\cong
    \bigoplus_{Z\in\Irr(\mathcal C)}N_{XY}^{Z} Z.
\]
The nonnegative integer \(N_{XY}^{Z}\) is the fusion coefficient, namely the
multiplicity of \(Z\) in \(X\otimes Y\).  The multiplicity of \(\mathcal C\)
is the maximum of these integers as \(X,Y,Z\) range over \(\Irr(\mathcal C)\).
Rigidity yields the Frobenius reciprocity relations
\[
    N_{XY}^{Z}
    =N_{Z\,Y^*}^{X}
    =N_{X^*\,Z}^{Y}
    =N_{Y^*\,X^*}^{Z^*},
    \qquad X,Y,Z\in\Irr(\mathcal C).
\]

For each \(X\in\Irr(\mathcal C)\), let \(N_X\) be the fusion matrix whose
entries are given by
\[
    (N_X)_{Y}^{\,Z}=N_{XY}^{Z},
    \qquad Y,Z\in\Irr(\mathcal C).
\]
Its Perron--Frobenius eigenvalue is positive and is, by definition, the
Frobenius--Perron dimension of \(X\), denoted by \(\FPdim(X)\).  The
Frobenius--Perron theorem, together with \cite[Theorem 8.6]{etingof2005fusion},
shows that these numbers extend uniquely to a ring homomorphism
$\FPdim:K_0(\mathcal C)\longrightarrow\mathbb R$. The Frobenius--Perron dimension of $\C$ is
\[
    \FPdim(\mathcal C)
    =\sum_{X\in\Irr(\mathcal C)}\FPdim(X)^2.
\]

A pivotal structure on \(\mathcal C\) is a monoidal natural isomorphism
$i:\operatorname{Id}_{\mathcal C}\Longrightarrow(-)^{**}$,
where $(-)^*$ denotes the duality functor. Its pivotal trace defines
$\dim(X)$. The intrinsic global dimension is
\[
\dim(\C)=\sum_{X\in\Irr(\C)}\dim(X)\dim(X^*)
\]
by \cite[Proposition~2.9]{etingof2005fusion}. A pivotal structure is
spherical when $\dim(X)=\dim(X^*)$ for every simple $X$; only then may
this formula be written as $\sum_X\dim(X)^2$.
A fusion category is \emph{pseudo-unitary} if its intrinsic global
dimension equals its Frobenius--Perron dimension. It then has a unique
spherical structure with $\dim(X)=\FPdim(X)>0$ for every simple $X$
\cite[Proposition~8.23]{etingof2005fusion}. We use this positive
spherical structure whenever working with a pseudo-unitary category.
Unitarity is a stronger property and will be established separately
where it is needed.

\subsection{The Drinfeld center}

We will also use the Drinfeld center \(\mathcal Z(\mathcal C)\) of a fusion category $\C$.  Its objects are
pairs \((Z,c_{Z,-})\), where \(Z\) is an object of \(\mathcal C\) and $    c_{Z,X}:Z\otimes X\to X\otimes Z$
is a half-braiding, natural in \(X\in\mathcal C\) and satisfying the usual
compatibility with tensor products.  The center is a braided fusion category; in the standard spherical situation it is modular.

A principal tool that we will use is induction-restriction to the Drinfeld center. There is a forgetful functor $F:\ZC\to\mathcal C$, which sends \((Z,c_{Z,-})\) to its underlying object \(Z\).  This functor has a right adjoint $I:\mathcal C\to\ZC$,
usually called the induction functor.  Thus, for every \(X\in\mathcal C\) and \(Y\in\ZC\), there is a natural isomorphism  $\operatorname{Hom}_{\ZC}(Y,I(X)) \cong \operatorname{Hom}_{\mathcal C}(F(Y),X)$.

The underlying object of an induced object admits the standard description
\cite[Proposition 5.4]{etingof2005fusion}:
\begin{equation}\label{eq:induction-formula}
    F(I(X))\cong
    \bigoplus_{V\in\Irr(\mathcal C)}V\otimes X\otimes V^*.
\end{equation}
Because the forgetful functor preserves Frobenius--Perron dimensions, this
formula gives
\begin{equation}\label{dimI(X)}
\begin{aligned}
    \FPdim(I(X))=\FPdim(X)\,\FPdim(\mathcal C).
\end{aligned}
\end{equation}

Throughout, we write $[A,B]=\dim_{\CC}\operatorname{Hom}(A,B)$ for $A,B$ in a fusion category $\C$. Thus $[A,B]$ is exactly the multiplicity of $B$ in $A$ when $B$ is simple. Further, if we denote $[I(X),Z]$ by $m_X(Z)$, we get
\begin{equation}\label{eq:induction}
\begin{split}
 [Y,I(X)]&=[F(Y),X],\\
[I(X),I(Y)]&=\sum_Zm_X(Z)m_Y(Z).
\end{split}
\end{equation}

A  twist of a braided category $\C$ is a natural automorphism $\theta:\id_{\C}\to \id_{\C}$ satisfying $\theta_1=\id_1$ and $\theta_{X\otimes Y}=(\theta_X\otimes \theta_Y)c_{Y,X}c_{X,Y}$. Let $X$ be a simple object of $\C$. Then $\theta_X$ is a scalar in $\mathbb{C}$. It is well-known that any modular category is equipped with a twist $\theta$ such that $\theta_X$ is a root of unity for any $X\in \Irr(\C)$ \cite{Vafa1988421}.

When using spherical traces in the center, put $\Delta=\dim(\C)$;
in the positive spherical structure, $\Delta=\FPdim(\C)$.
For the canonical twist of the Drinfeld center $\ZC$,
\begin{equation}\label{eq:indicators}
\theta_{I(\one)}=\operatorname{id},\qquad
\Tr(\theta_{I(X)}^j)=\nu_j(X)\Delta\quad(j\geq1),
\end{equation}
where $\nu_j(X)$ is the $j$th Frobenius--Schur indicator of $X$.
These facts follow from \cite[Proposition~2.4 and Theorem~2.5]{Ostrik}
and \cite[Theorem~4.1]{NS}. For simple $X\ne \one$, $\nu_1(X)=0$;
$\nu_2(X)$ is $0$ if $X$ is not self-dual, and is $\pm1$ otherwise.
All simple twists in the Drinfeld center $\ZC$ are roots of unity.

We consistently write $\Ss$ for the unnormalized $S$-matrix of
$\ZC$, so $\Ss_{\one,Z}=\dim Z$, and $S=\Ss/\Delta$ for the
normalized matrix. The balancing identity is
\begin{equation}\label{eq:balancing-general}
\Ss_{V,W}=\frac{1}{\theta_V\theta_W}
\sum_Z[V\otimes W,Z]\theta_Z\dim Z.
\end{equation}
When $K_0(\C)$ is commutative, each irreducible character $\chi$
corresponds to a multiplicity-one simple summand $Z_\chi$ of $I(\one)$.
Writing $f_\chi=\sum_{X\in\Irr(\C)}\chi(X)\chi(X^*)$, one has
\begin{equation}\label{eq:central-character}
\dim Z_\chi=\frac{\Delta}{f_\chi},\qquad
\theta_{Z_\chi}=1,\qquad
\Ss_{Z_\chi,W}=\dim(Z_\chi)\chi(F(W)),\qquad
S_{Z_\chi,W}=\frac{\chi(F(W))}{f_\chi}.
\end{equation}
These are the formal-codegree correspondence and the pullback character
identity in \cite[Section~2.3, Example~2.9, and Theorem~2.13]{Ostrik}.
The normalization in \eqref{eq:central-character} will also be used for
the Galois action on $S$.

\subsection{Near-Tambara--Yamagami categories}\label{general-NTYC}
Inspired by near-group fusion categories, the authors \cite{DS} refer to fusion categories $\mathcal{C}=\mathcal{C}(\mathcal{D},\kappa)$ that admit a proper fusion subcategory $\mathcal{D}$ of maximal rank as \emph{near-integral}. In this paper, we consider the case that $\D$ is a Tambara--Yamagami category. In this case, we call  $\C$ a near-Tambara--Yamagami category.

\begin{example} \label{example:PSU}
The category $\Rep(\PSU(3,2))$ is a near-Tambara--Yamagami
category; see \cite[Example~3.7 and Figure~4]{DS} for its character table.
Write $G_7=\PSU(3,2)$. The irreducible complex representations of $G_7$ are denoted by
\[
\mathbf{1},\quad \lambda_1,\quad \lambda_2,\quad \lambda_3,\quad
t,\quad \rho,
\]
with dimensions
\[
\dim(\mathbf{1})=\dim(\lambda_i)=1,\qquad
\dim(t)=2,\qquad
\dim(\rho)=8.
\]

The fusion rules of $\Rep(G_7)$ are given by
\begin{align}
&\lambda_i\otimes\lambda_j
\cong (\lambda_i\lambda_j), \lambda_i\otimes t\cong t,
t\otimes t\cong
\mathbf{1}\oplus\lambda_1\oplus\lambda_2\oplus\lambda_3, \\
\lambda_i\otimes \rho
&\cong \rho, t\otimes \rho\cong 2 \rho, \rho\otimes \rho
\cong \mathbf{1}\oplus\lambda_1\oplus\lambda_2\oplus\lambda_3
\oplus 2 t\oplus 7 \rho.
\end{align}
Here $\lambda_i\lambda_j$ denotes the product in the character group,
and $i,j\in\{1,2,3\}$. Moreover,
\[
\lambda_1\otimes \lambda_1\cong \lambda_2\otimes \lambda_2\cong \lambda_3\otimes \lambda_3\cong \mathbf{1},
\quad
\lambda_1\otimes\lambda_2\cong \lambda_3.
\]
\end{example}

Throughout this subsection, $\mathcal{C}=\mathcal{C}(\mathcal{D},\kappa)$ is a near-Tambara--Yamagami category. Let $\rho$ denote the unique simple object not in $\mathcal{D}$.
Frobenius reciprocity implies
$X\otimes\rho\cong \rho\otimes X\cong \FPdim(X) \rho$ for every simple $X\in\D$; in particular,
$\D$ is integral. By \cite{TY}, the group $G$ consisting of invertible simple objects of $\D$ is abelian and the fusion rules are
\begin{equation}\label{eq:fusion}
\begin{gathered}
g\otimes t\cong t\otimes g\cong t,\quad g\otimes\rho\cong \rho\otimes g\cong \rho\quad(g\in G),\qquad |G|=n^2,\\
t\otimes t\cong \bigoplus_{g\in G}g,\qquad t\otimes\rho\cong \rho\otimes t\cong n \rho,\qquad
\rho\otimes \rho\cong \bigoplus_{g\in G}g\oplus n t\oplus\kappa \rho.
\end{gathered}
\end{equation}
Here $t$ is the unique noninvertible simple object of $\D$, $n=\FPdim(t)\in\Z_{\geq2}$, $\kappa\in\Z_{\geq0}$, and both
$t$ and $\rho$ are self-dual. The parameter $\kappa$ is the coefficient
of $\rho$ in $\rho^2$; it need not be the maximum fusion coefficient.
That maximum is $\max\{n,\kappa\}$, and is $\max\{2,\kappa\}$
after the rank reduction. In particular, both $R_0$ and $R_2$ have
maximum fusion coefficient two. Put
\begin{equation}\label{eq:dimensions-general}
N=2n^2,\qquad d=d_+=\frac{\kappa+\sqrt{\kappa^2+8n^2}}2,
\qquad d_-=\frac{\kappa-\sqrt{\kappa^2+8n^2}}2.
\end{equation}
Then $dd_-=-N$, $d^2=\kappa d+N$, and
$\Delta:=\FPdim(\C)=N+d^2=4n^2+\kappa d$.
Since $d$ is an algebraic integer, $d\notin\Z$ is equivalent to
$d\notin\Q$. We use ``irrational'' and ``rational'' in this sense;
the rational case is precisely the integral case for $\C$.

By \cite[Lemma~4.1]{DS}, a near-integral category is Galois conjugate to
a pseudo-unitary category. Galois conjugation preserves fusion rules.
We therefore use a pseudo-unitary conjugate whenever positivity of
categorical dimensions is needed. Integral categories are already
pseudo-unitary by \cite[Proposition~8.24]{etingof2005fusion}, and all pseudo-unitary categories below carry their
canonical positive spherical structures.

By (\ref{eq:induction-formula}), we have
\begin{equation}\label{eq:FI-general}
\begin{split}
F(I(g))&\cong n^2 g\oplus\bigoplus_{h\in G}2 h\oplus n t\oplus\kappa \rho,\\
F(I(t))&\cong \bigoplus_{h\in G}n h\oplus 3n^2 t\oplus n\kappa \rho,\\
F(I(\rho))&\cong \bigoplus_{h\in G}\kappa h\oplus n\kappa t\oplus (4n^2+\kappa^2) \rho.
\end{split}
\end{equation}
For a tensor functor, its image means the full fusion subcategory
generated by the simple constituents of its values. A tensor functor
$L:\mathcal A\to\mathcal B$ is dominant if every object of
$\mathcal B$ is a subobject of $L(X)$ for some $X\in\mathcal A$.
A tensor functor is dominant onto its image.

\begin{lemma}\label{lem:codegrees}
The characters of $K_0(\C)$ and their formal codegrees are
\[
\begin{array}{c|ccc|c}
&g\in G&t&\rho&f\\\hline
\chi_+&1&n&d&N+d^2\\
\chi_-&1&n&d_-&N+d_-^2\\
\eta&1&-n&0&N\\
\eta_\psi&\psi(g)&0&0&n^2
\end{array}
\]
where $\psi$ ranges over the nontrivial characters of $G$.
The object $I(\one)$ is multiplicity-free. Its summands corresponding to
these characters have dimensions
\begin{equation}\label{eq:unit-dimensions}
1,\qquad \frac{d^2}{N}=1+\frac\kappa N d,\qquad
2+\frac\kappa N d,\qquad 4+\frac{2\kappa}N d,
\end{equation}
respectively, the last with $n^2-1$ summands. Each forgetful image
contains $\one$ exactly once.
\end{lemma}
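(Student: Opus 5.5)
The plan is to work directly with the commutative based ring $K_0(\C)\otimes\CC$, which has dimension $n^2+2$, and exhibit $n^2+2$ distinct characters. Once we have that many, the list is complete and $K_0(\C)\otimes\CC$ is semisimple. Commutativity holds because $G$ is abelian, and the fusion rules \eqref{eq:fusion} are symmetric in the two factors.

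\textbf{Constructing the characters.} Any character $\chi$ restricts to a character of $\Z G$, so $\chi|_G=\psi$ for some $\psi\in\widehat G$.

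If $\psi\ne 1$, choose $g$ with $\psi(g)\neq1$. The relation $gt=t$ gives $\chi(t)=\psi(g)\chi(t)$, hence $\chi(t)=0$. In the same way $g\rho=\rho$ gives $\chi(\rho)=0$. The remaining relations $t^2=\sum_g g$ and $\rho^2=\sum_g g+nt+\kappa\rho$ then reduce to $0=\sum_g\psi(g)=0$, so they are consistent. This produces the $n^2-1$ characters $\eta_\psi$.

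If $\psi=1$, then $\chi(t)^2=n^2$, so $\chi(t)=\pm n$. The relation $t\rho=n\rho$ gives $\chi(t)\chi(\rho)=n\chi(\rho)$.
\begin{itemize}
\item If $\chi(t)=-n$, this forces $\chi(\rho)=0$. The $\rho^2$ relation then reads $0=n^2-n^2$, which holds. This gives $\eta$.
\item If $\chi(t)=n$, then $\chi(\rho)^2=n^2+n^2+\kappa\chi(\rho)$, so $\chi(\rho)\in\{d,d_-\}$. This gives $\chi_\pm$.
\end{itemize}
All these characters are distinct, since $d\ne d_-$. That gives $n^2+2$ characters, matching the dimension.

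\textbf{Formal codegrees.} We compute $f_\chi=\sum_X\chi(X)\chi(X^*)$, using that $t,\rho$ are self-dual and $\psi(g^{-1})=\overline{\psi(g)}$.
\begin{itemize}
\item $\chi_\pm$: $f=n^2+n^2+d_\pm^2=N+d_\pm^2$.
\item $\eta$: $f=n^2+n^2=N$.
\item $\eta_\psi$: $f=\sum_g|\psi(g)|^2=n^2$.
\end{itemize}

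\textbf{Structure of $I(\one)$.} Since $K_0(\C)$ is commutative and semisimple, \eqref{eq:central-character} applies: $I(\one)=\bigoplus_\chi Z_\chi$ is multiplicity-free, and $\dim Z_\chi=\Delta/f_\chi$. We work in the pseudo-unitary conjugate with $\Delta=N+d^2=2N+\kappa d$. The dimensions are:
\begin{itemize}
\item $\chi_+$: $\dim Z_{\chi_+}=1$.
\item $\eta$: $\Delta/N=2+\kappa d/N$.
\item $\eta_\psi$: $\Delta/n^2=2\Delta/N=4+2\kappa d/N$.
\item $\chi_-$: $\Delta/(N+d_-^2)$. Using $d_-=-N/d$, we get $N+d_-^2=N(d^2+N)/d^2$, so the quotient is $d^2/N=(\kappa d+N)/N=1+\kappa d/N$.
\end{itemize}

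\textbf{Each forgetful image contains $\one$ once.} By adjunction, $[F(Z_\chi),\one]=[Z_\chi,I(\one)]=1$, because $I(\one)$ is multiplicity-free.

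The only point needing care is justifying semisimplicity, so that \eqref{eq:central-character} applies with exactly one summand per character. This is handled by the count of $n^2+2$ distinct characters. The rest is the routine algebra of $d_\pm$.
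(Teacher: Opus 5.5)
Your proof is correct and follows the paper's argument. The same case split on $\chi|_G$ produces the $n^2+2$ characters, the formal-codegree theorem via \eqref{eq:central-character} gives a multiplicity-free $I(\one)$ with $\dim Z_\chi=\Delta/f_\chi$, the dimensions follow from $dd_-=-N$, and adjunction gives the unit multiplicity. Your consistency checks and the semisimplicity remark are harmless extra detail; semisimplicity of a complexified fusion ring is in any case automatic.
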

\begin{proof}
If a character is nontrivial on $G$, the relations $g\otimes t\cong t$ and
$g\otimes\rho\cong \rho$ force its values on $t,\rho$ to vanish. Otherwise
its value on $t$ is $\pm n$; the value $-n$ forces the value on
$\rho$ to be zero, while $n$ gives the two roots $d,d_-$.
This lists all $n^2+2$ characters. Their codegrees are
$f_\chi=\sum_X\chi(X)\chi(X^*)$. The formal-codegree theorem
\cite[Theorem~2.13]{Ostrik} assigns a summand of dimension
$\Delta/f_\chi$ to each character, with multiplicity one.
The displayed dimensions follow from $dd_-=-N$; the last statement
is adjunction.
\end{proof}

\begin{lemma}\label{lem:order-bound}
Every $g\in G$ satisfies
$\ord(g)\leq n$. In particular, if $n=2$ then $G\cong C_2\times C_2$.
\end{lemma}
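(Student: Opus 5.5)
The plan is to obtain the bound from the Frobenius--Schur indicator identity \eqref{eq:indicators} applied to $X=g$, combined with a Vandermonde argument on the twists of the simple summands of $I(g)$. We first pass, via \cite[Lemma~4.1]{DS}, to a pseudo-unitary Galois conjugate of $\C$. This changes neither the fusion rules nor the orders of elements of $G$, and it lets us use the positive spherical structure, so $\Delta=\FPdim(\C)>0$. Fix $g\in G$ of order $k>1$. For an invertible object, the $j$th indicator vanishes unless $\one$ is a summand of $g^{\otimes j}$, that is, unless $k\mid j$. Hence \eqref{eq:indicators} gives
\[
\Tr(\theta_{I(g)}^j)=0\quad(1\le j\le k-1),\qquad \Tr(\theta_{I(g)}^0)=\dim I(g)=\Delta\ne0 .
\]

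Next, group the simple summands $Z$ of $I(g)$ by twist. For each root of unity $\lambda$, let $w_\lambda=\sum_{\theta_Z=\lambda}[I(g),Z]\dim Z>0$, and let $r$ be the number of distinct values $\lambda$ that occur. Then $\sum_\lambda w_\lambda\lambda^j$ vanishes for $1\le j\le k-1$, while $\sum_\lambda w_\lambda=\Delta\ne0$. Suppose $r\le k-1$. The vector $(w_\lambda\lambda)_\lambda$ is then annihilated by the rows $j=0,\dots,r-1$ of an invertible $r\times r$ Vandermonde matrix in the distinct values $\lambda$. So $w_\lambda\lambda=0$ for every $\lambda$, which is a contradiction. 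Therefore $k\le r$, and the whole problem reduces to showing that $I(g)$ has at most $n$ distinct twists.

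To bound $r$, I will use the decomposition \eqref{eq:FI-general}, together with adjunction and the description of $I(\one)$ in Lemma~\ref{lem:codegrees}. The summands of $I(g)$ split according to whether their forgetful image meets $\rho$, $t$, or only $G$. The object $\rho$ appears in $F(I(g))$ only $\kappa$ times. By \eqref{eq:induction}, $[I(g),I(\rho)]=[F(I(g)),\rho]=\kappa$, and comparing with $[I(\one),I(\rho)]$ should show that the summands of $I(g)$ meeting $\rho$ share their twists with summands of $I(\one)$, hence have twist $1$. The remaining summands are controlled by $[I(g),I(t)]=n$ and $[I(g),I(h)]$ for $h\in G$. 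My aim is to show that their twists are determined by a character of the cyclic group $\langle g\rangle$ through a half-braiding of $g$ with $t$, and so they form at most $n$ classes. This last step is the main obstacle. If it resists direct counting, I will try the $S$-matrix route instead: use \eqref{eq:balancing-general} with $V=Z_\chi$ from \eqref{eq:central-character} and $W$ a summand of $I(g)$, which pins down $\chi(F(W))$ and hence restricts the possible twists. In the worst case, I will try to replace the Vandermonde count with dimension weights by one weighted by $[F(Z),g]$, whose total is $n^2+2$.

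Finally, if $n=2$ then $|G|=4$ and $G$ has no element of order $4$, so $G\cong C_2\times C_2$.
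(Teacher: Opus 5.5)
Your indicator step is correct. Since $\nu_j(g)=0$ for $1\le j<\ord(g)$, the Vandermonde argument gives $\ord(g)\le r$, where $r$ is the number of distinct twists on $I(g)$. The $n=2$ deduction is also fine. But this is the easy direction, and the bound $r\le n$, which carries all the content, is never proved.

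That bound cannot follow from twist bookkeeping that does not use $\rho$ essentially. Your own lower bound, applied to $\mathrm{TY}(C_{n^2},\chi,\tau)$ with $g$ a generator, gives $r\ge n^2>n$.

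The concrete step you propose toward $r\le n$ is in fact false. Work in $\mathcal Z(\Rep(\PSU(3,2)))$, with $\PSU(3,2)\cong C_3^2\rtimes Q_8$. Take the class of $j\in Q_8$, whose centralizer is $\langle j\rangle\cong C_4$, and the character $\mu$ with $\mu(j)=-1$. The resulting simple object forgets to $\lambda\oplus\lambda'\oplus2\rho$, where $\lambda,\lambda'$ are the two linear characters with value $-1$ at $j$. So it is a summand of $I(\lambda)$ meeting $\rho$, yet its twist is $\mu(j)=-1$, not $1$.

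Your fallbacks do not close the gap either. Balancing against $Z_\chi$ only relates $\theta_W$ to the twists of constituents of $Z_\chi\otimes W$; it does not bound how many twists occur. And there is no trace identity with weights $[F(Z),g]$ to which a Vandermonde argument could be applied.

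The missing idea is to use $\rho$ through the module structure, not through twists. Because $X\otimes\rho\cong\FPdim(X)\rho$ for $X\in\D$, the category $\operatorname{add}(\rho)$ is a rank-one $\D$-module category. Hence $\Phi(X)=\Hom_{\C}(\rho,X\otimes\rho)$ is a fiber functor on $\D$. The group $G$ then acts projectively on $V=\Phi(t)\cong\CC^n$ by operators $U_g$. The isomorphism $\bigoplus_{g}\Phi(g)\cong\Phi(t\otimes t^*)\cong\End_{\CC}(V)$ shows that the $U_g$ are linearly independent. Consequently $\id,U_g,\dots,U_g^{\ord(g)-1}$ are nonzero multiples of the distinct operators $U_{g^j}$, so they are linearly independent, and Cayley--Hamilton forces $\ord(g)\le n$. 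This is exactly what separates the near-Tambara--Yamagami situation from $\mathrm{TY}(C_{n^2},\chi,\tau)$, which admits no fiber functor.
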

\begin{proof}
The rank-one module category $\operatorname{add}(\rho)$ is equivalent,
as an abelian category, to $\operatorname{Vec}$. Under this equivalence,
the action of $X\in\D$ is tensoring with
$\Phi(X)=\operatorname{Hom}_{\C}(\rho,X\otimes\rho)$, and the module
associativity maps give natural isomorphisms
\[
 \Phi(X)\otimes\Phi(Y)\xrightarrow{\ \sim\ }\Phi(X\otimes Y),
 \qquad \CC\xrightarrow{\ \sim\ }\Phi(\one).
\]
On elementary tensors the first map sends $f\otimes g$ to
$(\id_X\otimes g)\circ f$. Thus $\Phi$ is strong monoidal. In
particular, it preserves duals, so $\Phi(t^*)\cong\Phi(t)^*$.

For the order estimate, let
\[
\Phi:\D\longrightarrow\operatorname{Vec},\qquad
\Phi(X)=\operatorname{Hom}_{\C}(\rho,X\otimes\rho),
\]
denote the functor just described. We use only its values and tensor
structure in the argument below.

For $g\in G$,
$g\otimes\rho\cong\rho$, so
\[
 \Phi(g)=\operatorname{Hom}_{\C}(\rho,g\otimes\rho)
 \cong\operatorname{End}_{\C}(\rho)\cong\CC.
\]
The first isomorphism uses any chosen isomorphism $g\otimes\rho\cong\rho$;
the last is Schur's lemma. Hence $\Phi(g)$ is one-dimensional. Likewise,
with $V=\Phi(t)$,
\[
 V\cong\operatorname{Hom}_{\C}(\rho,n \rho)\cong\CC^n,
 \qquad\dim_{\CC}V=n.
\]

Choose a nonzero vector $v_g\in\Phi(g)$; this is a basis of that line.
Choose an isomorphism $\psi_g:g\otimes t\to t$, which exists because
$g\otimes t\cong t$. The induced map
$\Phi(g)\otimes V\to\Phi(g\otimes t)$ followed by $\Phi(\psi_g)$
defines an invertible operator $U_g\in\operatorname{End}_{\CC}(V)$. The
choice of $\psi_g$ is unique up to a nonzero scalar because $t$ is simple.
Normalize $v_\one$ and $\psi_\one$ so that $U_\one=\operatorname{id}_V$.

For $g,h\in G$, both the composite of $v_g$ and $v_h$ and the chosen vector
$v_{gh}$ are nonzero elements of the one-dimensional space $\Phi(gh)$.
Thus their ratio is a scalar $c(g,h)\in\CC^\times$. Similarly, the two maps
from $(g\otimes h)\otimes t$ to $t$ obtained from $\psi_g,\psi_h$ and
from $\psi_{gh}$ differ by a nonzero scalar, since the relevant Hom space
is one-dimensional. Applying $\Phi$ and using monoidal coherence gives
\[
 U_gU_h=\alpha(g,h)U_{gh},\qquad \alpha(g,h)\in\CC^\times.
\]
Associativity implies the cocycle identity for $\alpha$, so the $U_g$
form a projective representation. Only the nonzero-scalar relation is
needed below.

Rigidity gives
$\operatorname{Hom}_{\D}(g\otimes t,t)\cong
\operatorname{Hom}_{\D}(g,t\otimes t^*)$. Since
$t\otimes t^*\cong\bigoplus_{h\in G}h$ with each summand once, the map
corresponding to $\psi_g$ is a nonzero multiple of the inclusion of the
$g$-summand. Therefore
$\bigoplus_{g\in G}g\to t\otimes t^*$ is an isomorphism.
Applying the strong monoidal functor $\Phi$, and using
$\Phi(t^*)\cong V^*$, gives
\[
 \bigoplus_{g\in G}\Phi(g)\cong V\otimes V^*\cong\operatorname{End}_{\CC}(V).
\]
Under the standard identification $V\otimes V^*\cong\operatorname{End}(V)$,
the basis vector $v_g$ maps to a nonzero scalar multiple of $U_g$.
Consequently
$\operatorname{End}_{\CC}(V)=\bigoplus_{g\in G}\CC U_g$, so the operators
$U_g$ are linearly independent.

If $\ell=\operatorname{ord}(g)$, repeated projective multiplication gives
$U_g^j=c_jU_{g^j}$ with $c_j\in\CC^\times$. The operators
$\operatorname{id}_V,U_g,\ldots,U_g^{\ell-1}$ are therefore linearly
independent. Cayley--Hamilton gives a nontrivial relation among the first
$n+1$ powers of the $n\times n$ matrix $U_g$. Hence $\ell\leq n$.
When $n=2$, $|G|=4$ and every nonidentity element has order at most two.
Thus $G$ is not cyclic and $G\cong C_2\times C_2$.
\end{proof}

\section{The irrational near-Tambara--Yamagami categories}\label{sec:irrational}
Throughout this section, $d\notin\Z$ and the notation of
Section~\ref{general-NTYC} is in force. For arguments depending only on
the fusion rules we work in a pseudo-unitary Galois conjugate. In
Theorem~\ref{thm:kappa-zero}, which concerns tensor equivalence, we
return to the original category.

\begin{proposition}\label{prop:rank-reduction}
One has $n=2$ and hence $\kappa=8k$ for some $k\in\Z_{\geq0}$.
\end{proposition}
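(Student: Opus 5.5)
The plan is to extract everything from Lemma~\ref{lem:codegrees}, working in a pseudo-unitary Galois conjugate. There the simple summands of $I(\one)$ have dimensions $\Delta/f_\chi$ that are algebraic integers, and the normalized entries $S_{Z_\chi,W}=\chi(F(W))/f_\chi$ behave well under the Galois group of $\Q(d)=\Q(\sqrt{\kappa^2+8n^2})$.

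\textbf{Step 1: integrality of the central dimensions.} Let $\sigma$ be the nontrivial automorphism, so $\sigma(d)=d_-$ and $dd_-=-N$. Requiring the dimensions in \eqref{eq:unit-dimensions} to be algebraic integers, and taking traces and norms, gives rational integrality conditions. For the $\eta$-summand, $2+\kappa d/N$ has trace $4+\kappa^2/N$, which forces $N=2n^2$ to divide $\kappa^2$. For the $\eta_\psi$-summands, $4+2\kappa d/N$ has trace $8+2\kappa^2/N$ and norm $16+4\kappa^2/N-4\kappa^2/N$; these give conditions of the same type. Write $\kappa^2=2n^2m$. Then $\kappa^2+8n^2=2n^2(m+4)$, and irrationality says $2(m+4)$ is not a square. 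These conditions alone do not bound $n$, so a second ingredient is needed.

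\textbf{Step 2: use the Galois action on $\ZC$ to bound $n$.} The plan is to apply $\sigma$ to the normalized $S$-matrix of $\ZC$. There is a permutation $\hat\sigma$ of $\Irr(\ZC)$ with $\sigma(S_{X,Y})=\epsilon_\sigma(X)S_{\hat\sigma(X),Y}$. Since $\sigma(f_{\chi_+})=f_{\chi_-}$ and $\sigma$ fixes $f_\eta=N$ and $f_{\eta_\psi}=n^2$, the rows indexed by $Z_{\chi_\pm}$ are exchanged. The remaining $n^2$ summands $Z_\eta,Z_{\eta_\psi}$ have rational rows, so they are fixed up to sign. I then compare squared norms of the columns indexed by $I(t)$ and $I(\rho)$, using \eqref{eq:FI-general}, $\sum_Z m_X(Z)^2=[I(X),I(X)]$, and $\theta$-data from \eqref{eq:balancing-general}. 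The aim is an equation in which $n^2-1$ copies of a fixed rational quantity must match an irrational contribution from $\chi_\pm$. Combined with Step 1, I expect this to force $n^2-1\le 3$, that is $n=2$. Lemma~\ref{lem:order-bound} then gives $G\cong C_2\times C_2$.

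\textbf{Step 3: $8\mid\kappa$ once $n=2$.} Step 1 with $n=2$ gives $8\mid\kappa^2$, hence $4\mid\kappa$. Write $\kappa=4j$ and $s=\sqrt{j^2+2}$, so $d=2j+2s$. All the central dimensions are then integers in $\Z[s]$, so codegrees alone cannot decide the parity of $j$. To exclude odd $j$ I would use \eqref{eq:indicators} for $X=t$ and $X=\rho$ with $j=2$. Then $\Tr(\theta^2_{I(\rho)})=\pm\Delta$, which is a sum of $\FPdim(I(\rho))=d\Delta$ roots of unity weighted by dimensions. Reducing this modulo $2$ in $\Z[s]$, using that twists of the $I(\one)$-summands are $1$, should force $j$ to be even.

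\textbf{Main obstacle.} I expect the hardest part to be Step 2, since pure integrality does not bound $n$. If the column-norm comparison fails, I would instead apply Lemma~\ref{lem:order-bound}-type linear algebra to the projective representation on $V=\Phi(t)$ combined with the $\rho$-module.
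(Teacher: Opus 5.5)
There is a genuine gap. Step~2 is where $n=2$ has to be proved, and it is a plan rather than an argument. You never produce the equation in which ``$n^2-1$ copies of a fixed rational quantity'' must match an irrational term. Note also that $I(t)$ and $I(\rho)$ are not simple, so they do not index columns of $S$. Your fallback cannot work either: the bound $\ord(g)\le n$ of Lemma~\ref{lem:order-bound} holds for $G=C_2^{2s}$, $n=2^s$, for every $s$, so it does not bound $n$. Step~3 is likewise only an expectation (``should force $j$ to be even''). The root cause is in Step~1. There you use only that the dimensions in \eqref{eq:unit-dimensions} are algebraic integers, which gives only the weak condition $N\mid\kappa^2$.

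The missing idea is to use the forgetful functor. In the pseudo-unitary conjugate, every central simple $Z$ satisfies $\dim Z=\FPdim F(Z)=a_Z+b_Zd$. Here $b_Z=[F(Z),\rho]$, and $a_Z$ is the number of invertible constituents plus $n[F(Z),t]$. Both are nonnegative integers, and $1,d$ are linearly independent over $\Q$.

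\begin{itemize}
\item Compare with the $\chi_-$-summand, of dimension $1+(\kappa/N)d$. This gives $\kappa/N=k\in\Z_{\ge0}$ at once, i.e.\ $2n^2\mid\kappa$. Once $n=2$ this is $8\mid\kappa$, with no indicator argument needed.
\item The same bookkeeping bounds $n$. The rational parts of the four dimensions are $1,1,2,4$. Each image contains $\one$ exactly once, and each copy of $t$ adds $n$ to the rational part. Since $[F(I(\one)),t]=n>0$, some summand contains $t$, so $n\le3$.
\item For $n=3$, the unique summand of dimension $2+kd$ must restrict to $\one\oplus f\oplus k\rho$ with $f\ne\one$. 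Unimodularity of $\C$ gives $I(\one)^*\cong I(\one)$. Since that dimension occurs only once, the summand is self-dual, so $f=f^{-1}$, which is impossible in a group of order $9$.
\end{itemize}
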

\begin{proof}
The second summand in \eqref{eq:unit-dimensions} forgets to an object
of dimension $a+bd$ with $a,b\in\Z_{\geq0}$. Irrationality gives
$a=1$ and $b=\kappa/N$. Hence $\kappa=Nk$ with $k\geq0$ an integer.
The rational parts of the dimensions of the summands of $I(\one)$ are
$1,1,2,4$. Each forgetful image contains $\one$. If $n\geq4$, none
can contain $t$, contradicting $[F(I(\one)),t]=n$. Thus $n\leq3$.

If $n=3$, the unique summand of dimension $2+kd$ forgets to
$\one\oplus f\oplus k \rho$ for some $f\ne \one$. By \cite[Corollary~5.8]{Shimizu2017}, the fusion category $\mathcal C$ is unimodular. Hence, by
\cite[Theorem~4.10(6)]{Shimizu2017}, the right adjoint
$I$ of the forgetful functor $F$ satisfies
\[
I(\one)\cong I(\one)^*.
\]
Since the summand of dimension $2+kd$ occurs with multiplicity one
and this dimension is unique among the summands of $I(\one)$, that
summand is fixed by duality and is therefore self-dual. Thus $f=f^{-1}$,
impossible in a group of order nine. Therefore $n=2$.
\end{proof}

\subsection{The positive irrational parameter bound}\label{sec:bound}
The case where $k$ is $0$ will be discussed separately. All computations
in this subsection assume $k\geq1$.

By Proposition \ref{prop:rank-reduction}, we may write $\kappa=8k$ with $k\geq1$, and set
\begin{equation}\label{eq:normalization}
r=\sqrt{16k^2+8},\qquad q=2+kd,\qquad
d=4k+r,\quad d^2=8kd+8,\quad\Delta=8q.
\end{equation}

\begin{lemma}\label{lem:gram}
For $x\in G$ one has
\begin{align*}
 F(I(x))&\cong 4 x\oplus\bigoplus_{y\in G}2 y\oplus 2 t\oplus 8k \rho,\\
 F(I(t))&\cong \bigoplus_{y\in G}2 y\oplus 12 t\oplus 16k \rho,\\
 F(I(\rho))&\cong \bigoplus_{y\in G}8k y\oplus 16k t\oplus (16+64k^2) \rho.
\end{align*}
Write $G=\{\one,f,g,h\}$ and set $M_{X,Y}=[I(X),I(Y)]$.
In the order $\one,f,g,h,t,\rho$, this matrix is
\begin{equation}\label{eq:gram}
M=
\begin{pmatrix}
6&2&2&2&2&8k\\
2&6&2&2&2&8k\\
2&2&6&2&2&8k\\
2&2&2&6&2&8k\\
2&2&2&2&12&16k\\
8k&8k&8k&8k&16k&16+64k^2
\end{pmatrix}.
\end{equation}
\end{lemma}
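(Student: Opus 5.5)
The first three displays are the general formulas \eqref{eq:FI-general} with $n=2$ and $\kappa=8k$ substituted. This specialization is legitimate by Proposition~\ref{prop:rank-reduction} and Lemma~\ref{lem:order-bound}. As a sanity check I would still rederive them from \eqref{eq:induction-formula}, computing $\bigoplus_V V\otimes X\otimes V^*$ with the fusion rules \eqref{eq:fusion}, where $V$ runs over $\one,f,g,h,t,\rho$.

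For $X=x\in G$: the four invertible $V$ contribute $4x$, since $G$ is abelian. The term $t\otimes x\otimes t\cong t\otimes t$ contributes $\bigoplus_y y$. The term $\rho\otimes x\otimes\rho\cong\rho^2$ contributes $\bigoplus_y y\oplus 2t\oplus 8k\rho$. Together these give $4x\oplus\bigoplus_y 2y\oplus 2t\oplus 8k\rho$.

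For $X=t$: the invertible $V$ give $4t$. The term $t\otimes t\otimes t\cong 4t$ contributes $4t$. The term $\rho\otimes t\otimes\rho\cong 2\rho^2$ contributes $\bigoplus_y 2y\oplus 4t\oplus 16k\rho$. The total is $\bigoplus_y 2y\oplus 12t\oplus 16k\rho$.

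For $X=\rho$: the invertible $V$ give $4\rho$. The term $t\otimes\rho\otimes t\cong 4\rho$ contributes $4\rho$. The term $\rho\otimes\rho\otimes\rho$ contributes $\rho\otimes(\bigoplus_y y\oplus2t\oplus8k\rho)=4\rho\oplus4\rho\oplus 8k\rho^2$, which equals $8\rho\oplus 8k(\bigoplus_y y\oplus 2t\oplus 8k\rho)$. Altogether this gives $\bigoplus_y 8ky\oplus16kt\oplus(16+64k^2)\rho$, as claimed.

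For the matrix $M$, I will not use the decomposition of $I(X)$ into simples. Instead I use adjunction: $[I(X),I(Y)]=[F(I(X)),Y]$ for simple $Y\in\C$, which is the Hom-multiplicity of $Y$ in the forgetful image. (More precisely, $\Hom_{\ZC}(I(X),I(Y))\cong\Hom_\C(FI(X),Y)$, and since $\C$ is semisimple this dimension is the multiplicity of $Y$ in $FI(X)$.) Each entry $M_{X,Y}$ is therefore read off as the coefficient of $Y$ in the corresponding line above. Taking the row for $x$ gives $4+2=6$ on the diagonal, $2$ at the other group elements, $2$ at $t$, and $8k$ at $\rho$. The rows for $t$ and $\rho$ are read off the same way. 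Symmetry of the resulting matrix is a useful check: the $(x,\rho)$ entry is $8k$ from the $x$-line and also $8k$ from the $\rho$-line. This symmetry is consistent with $[I(X),I(Y)]=[I(Y),I(X)]$, which holds because all simples are self-dual and $F(I(X))$ is self-dual.

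I expect no real obstacle. The only points needing care are the bookkeeping of the $\rho\otimes\rho\otimes\rho$ term and justifying the adjunction step, which requires semisimplicity of $\ZC$ and of $\C$, both standard.
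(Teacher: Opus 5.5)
Your proposal is correct and matches the paper's proof, which likewise specializes \eqref{eq:FI-general} to $n=2$, $\kappa=8k$ and reads off $M_{X,Y}=[I(X),I(Y)]=[FI(X),Y]$ by adjunction. Your aside on symmetry does not need self-duality: $[A,B]=[B,A]$ holds for any two objects of a semisimple category.
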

\begin{proof}
Apply \eqref{eq:induction} to \eqref{eq:FI-general} and $M_{X,Y}=[I(X),I(Y)]=[FI(X),Y]$.
\end{proof}

\begin{lemma}\label{lem:unit}
After relabeling $f,g,h$, the six simple summands of $I(\one)$ are
$\one,A,B,C_0,C_g,C_h$, each of multiplicity one, with
\begin{equation}\label{eq:unit-table}
\begin{array}{c|c|l}
 Z&\dim Z&F(Z)\\ \hline
 \one&1&\one\\
 A&1+kd&\one\oplus k \rho\\
 B&q&\one\oplus f\oplus k \rho\\
 C_0&2q&\one\oplus f\oplus g\oplus h\oplus 2k \rho\\
 C_g&2q&\one\oplus g\oplus t\oplus 2k \rho\\
 C_h&2q&\one\oplus h\oplus t\oplus 2k \rho
\end{array}
\end{equation}
\end{lemma}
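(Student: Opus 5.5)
Throughout, $n=2$, $N=8$, $\kappa=8k$ with $k\ge1$, and $d$ is irrational. We start from Lemma~\ref{lem:codegrees}. The summands of $I(\one)$ are $\one$ and one summand $A$ of dimension $d^2/N=1+kd$. There is one summand $B$ for $\eta$, of dimension $\Delta/N=2+kd=q$. Finally there are three summands for the nontrivial characters $\psi$ of $G$, each of dimension $\Delta/4=2q$. We then determine the forgetful image of each summand. Each summand $Z$ satisfies $[F(Z),\one]=1$, and $\dim F(Z)=a+b\,d$ with $a$ the number of invertible constituents plus twice the multiplicity of $t$, and $b$ the multiplicity of $\rho$. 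Since $1$ and $d$ are linearly independent over $\Q$, we can read off rational and irrational parts separately.

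\textbf{Reading off the forgetful images.}
\begin{itemize}
\item For $A$ we get $a=1$, $b=k$, so $F(A)=\one\oplus k\rho$.
\item For $B$ we get $a=2$, $b=k$. Since $\one$ occurs once, the rational part $2$ is either a single nonunit $f\in G$ or impossible ($t$ would contribute $2$, exceeding the total with $\one$). Hence $F(B)=\one\oplus f\oplus k\rho$.
\item For each $C_\psi$ we get $a=4$, $b=2k$. The rational part $3$ beyond $\one$ is either three nontrivial invertibles, or one invertible plus $t$; the invertibles differ from $\one$ by multiplicity one.
\end{itemize}

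\textbf{Global constraints.} We use $F(I(\one))=\bigoplus_V V\otimes V^*$, which by \eqref{eq:FI-general} (with $g=\one$) equals $6\cdot\one\oplus2f\oplus2g\oplus2h\oplus2t\oplus8k\rho$. The coefficient of $\rho$ checks: $k+k+3\cdot2k=8k$. The coefficient of $t$ is $2$, so exactly two of the $C$'s contain $t$; call them $C_g,C_h$, each contributing one $t$ and one invertible. The third, $C_0$, is then $\one\oplus$ (three nontrivial invertibles). The multiplicities of $f,g,h$ must each be $2$. Counting: $B$ gives one $f$, and $C_g,C_h$ each give one invertible. Note $C_0$ cannot contain a repeated element. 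This is because multiplicity is bounded by adjunction: $[F(C_0),x]\le[I(\one),I(x)]$ restricted, or more simply each $C$ is simple and $M_{\one,x}=2$ bounds things. So $C_0$ contains $f,g,h$ once each. The remaining count then forces $C_g,C_h$ to contain the two elements of $G$ other than $\one,f$, one each; relabel them $g,h$. This yields the table \eqref{eq:unit-table}.

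\textbf{Main obstacle.} The delicate step is excluding a nontrivial invertible appearing twice in some $C_\psi$, e.g.\ $F(C_0)=\one\oplus2f\oplus g\oplus2k\rho$, and confirming that $F(C)$ cannot contain two copies of the same $x$. I would handle this with the Gram matrix of Lemma~\ref{lem:gram}. The entry $[I(\one),I(x)]=2$ equals $\sum_Z m_{\one}(Z)m_x(Z)=\sum_{Z\subset I(\one)}[F(Z),x]$, where $I(\one)$ is multiplicity free. This only recovers the total count $2$, so I would additionally use $[I(x),I(x)]=6$ together with $[F(Z),x]=[Z,I(x)]$. The summands of $I(\one)$ containing $x$ with multiplicities $m_Z$ contribute $\sum m_Z^2\le 6$. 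Combining this with $M_{x,y}=2$ for $x\ne y$, through the shared summands, rules out the uneven distributions. If this does not close the argument, I would fall back on the $S$-matrix identity \eqref{eq:central-character}: $F(Z_\psi)$ must have character values compatible with $\psi(F(W))$. In particular, $\dim\Hom(F(C_\psi),x)$ is governed by orthogonality with $\psi$, which forces $C_0$ to correspond to a character giving the full regular pattern on $G$.
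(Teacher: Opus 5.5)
\textbf{There is a genuine gap: nothing in your argument rules out a repeated invertible in $F(C_0)$.}

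Up to the $t$-count your argument is correct and matches the paper: $F(A)\cong\one\oplus k\rho$, $F(B)\cong\one\oplus f\oplus k\rho$, and exactly two of the dimension-$2q$ summands have images $\one\oplus x\oplus t\oplus 2k\rho$ and $\one\oplus y\oplus t\oplus 2k\rho$.

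The gap is the claim that $C_0$ has no repeated invertible constituent. The entry $M_{\one,x}=2$ only fixes the \emph{total} multiplicity of $x$ over all summands of $I(\one)$, and a total of $2$ is compatible with $[F(C_0),x]=2$. Up to swapping $g,h$, counting alone leaves two spurious configurations besides the correct one:
\begin{itemize}
\item[(a)] $\{x,y\}=\{g,g\}$ with $F(C_0)\cong\one\oplus f\oplus 2h\oplus 2k\rho$;
\item[(b)] $\{x,y\}=\{f,g\}$ with $F(C_0)\cong\one\oplus g\oplus 2h\oplus 2k\rho$.
\end{itemize}
Both match every multiplicity in $F(I(\one))$. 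So your step ``the remaining count then forces\ldots'' has nothing to rest on.

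\textbf{Why your proposed repairs do not close it.} Subtract the six known rows from \eqref{eq:gram} and restrict to the columns $f,g,h,t$; call the result the residual matrix. In both spurious configurations it has only nonnegative entries. Your diagonal bound $\sum m_Z^2\le 6$ and the off-diagonal values $M_{x,y}=2$ are exactly these nonnegativity conditions, so they are never violated.
\begin{itemize}
\item Configuration (a) dies because its residual matrix has determinant $-16$, so it is not positive semidefinite. A careful version of your $\Ss$-symmetry fallback also kills it. Symmetry with $B$ gives $[F(C_\psi),t]=(1-\psi(f))/2$, so $C_0$ belongs to the nontrivial character $\psi_0$ with $\psi_0(f)=1$. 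Then $\psi(F(C_0))=\pm2$ for $\psi(f)=-1$, whereas $\psi_0(F(C_\psi))=1+\psi_0(g)=0$.
\item Configuration (b) survives both of your tools. Its residual matrix is positive definite (leading minors $4,12,8,44$). Every symmetry relation among $A$, $B$ and the $C_\psi$ holds once the summand containing $g$ is attached to the character trivial on $g$, and the summand containing $f$ to the character trivial on $h$.
\end{itemize}

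\textbf{The missing idea.} The paper excludes (b) by an integrality argument on the nonnegative integer columns of the residual matrix:
\begin{itemize}
\item The $h$-column has squared norm $2$, so it consists of two entries equal to $1$. Its support is disjoint from the $g$-column because $\langle g,h\rangle=0$.
\item The $f$-column has squared norm $4$ and inner product $2$ with both the $g$- and $h$-columns. It therefore must equal $1$ on the support of the $h$-column.
\item Nonnegativity of the $t$-column then gives $\langle f,t\rangle\ge\langle h,t\rangle=2$, contradicting the residual value $\langle f,t\rangle=1$.
\end{itemize}
Some argument of this kind is needed; without it the table \eqref{eq:unit-table} is not established.
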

\begin{proof}
The ring $K_0(\C)$ is commutative. Its six characters consist of
$\chi_+,\chi_-$, the character with values $1$ on $G$, $-2$ on $t$,
and $0$ on $\rho$, and the three nontrivial characters of $G$
extended by zero on $t,\rho$. Their formal codegrees are
\[
 8+d^2,\quad 8+d_-^2,\quad 8,\quad 4,\quad 4,\quad 4.
\]
The formal-codegree theorem \cite[Theorem~2.13]{Ostrik} gives the
stated dimensions and multiplicities. In particular, every listed
forgetful image contains $\one$ exactly once. Irrationality gives
$F(A)\cong \one\oplus k \rho$ and $F(B)\cong \one\oplus f\oplus k \rho$ for some $f\ne \one$.

Each of the three objects of dimension $2q=4+2kd$ has $t$-multiplicity
at most one, since it already contains $\one$. The $t$-entry of
$F(I(\one))$ is $2$. Thus exactly two of these objects have images
\[
 \one\oplus x\oplus t\oplus 2k \rho,\qquad \one\oplus y\oplus t\oplus 2k \rho,
 \qquad x,y\in\{f,g,h\}.
\]
Each nonidentity element of $G$ has total multiplicity $2$ in
$F(I(\one))$. The case $x=y=f$ is therefore impossible. Up to interchanging
$g,h$ and $x,y$, the remaining possibilities are
$\{x,y\}=\{g,h\},\{f,g\},\{g,g\}$.

For $\{x,y\}=\{g,g\}$, the third image is
$\one\oplus f\oplus 2 h\oplus 2k \rho$. Subtracting the six known rows from the Gram
matrix gives, on the columns $f,g,h,t$,
\[
 \begin{pmatrix}
 4&2&0&2\\
 2&4&2&0\\
 0&2&2&2\\
 2&0&2&10
 \end{pmatrix}.
\]
Its determinant is $-16$, contradicting positive semidefiniteness.

For $\{x,y\}=\{f,g\}$, the third image is
$\one\oplus g\oplus 2 h\oplus 2k \rho$, and the residual Gram matrix is
\[
 \begin{pmatrix}
 4&2&2&1\\
 2&4&0&1\\
 2&0&2&2\\
 1&1&2&10
 \end{pmatrix}.
\]
These are inner products of nonnegative integer columns. The
$h$-column has exactly two nonzero entries, both $1$, and its support
is disjoint from that of the $g$-column. Its inner product with the
$f$-column is $2$. If the latter column had an entry $2$ on this
support, its squared norm $4$ would be exhausted, contradicting
its inner product $2$ with the $g$-column. The $f$-column therefore
equals $1$ at both entries of the $h$-column. Nonnegativity gives
$\langle f,t\rangle\geq\langle h,t\rangle=2$, contrary to
$\langle f,t\rangle=1$.

Only $\{x,y\}=\{g,h\}$ remains. The multiplicities in $F(I(\one))$
then determine the remaining image and give \eqref{eq:unit-table}.
\end{proof}

\begin{proposition}\label{prop:invertibles}
Every simple object of $\ZC$ whose
forgetful image contains an invertible object has twist $1$ or $-1$.
Besides the six summands of $I(\one)$, precisely two such objects have
twist $1$. They can be denoted by $P_1,P_2$, with
\begin{equation}\label{eq:positive-f}
 F(P_i)\cong f\oplus b_i \rho,\qquad b_1,b_2\in\Z_{\geq0},\qquad b_1+b_2=k.
\end{equation}
\end{proposition}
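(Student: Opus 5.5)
The plan is to derive all three assertions from the indicator formula \eqref{eq:indicators}, applied to $I(x)$ for $x\in G$, combined with the explicit list of summands of $I(\one)$ in Lemma~\ref{lem:unit}. A simple $Z\in\ZC$ has an invertible $x$ in $F(Z)$ exactly when $Z$ is a summand of $I(x)$, with multiplicity $m_x(Z)=[F(Z),x]$. So every statement is about the twists of the summands of $I(x)$, $x\in G=C_2\times C_2$, where I work in the pseudo-unitary conjugate with positive dimensions.

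\emph{Step 1: twists are $\pm1$.} Since $x^{\otimes j}=x^{j}$ and $x^2=\one$, I expect $\nu_j(x)=0$ for $j$ odd and $x\ne\one$, and $\nu_j(x)=\nu_2(x)^{j/2}$ for $j$ even, with $\nu_2(x)=\pm1$. The last identity is the step I would double-check, e.g.\ via the standard description of indicators of invertible objects through $H^3(C_2,\CC^\times)$. Then \eqref{eq:indicators} with $j=4$ gives
\[
\sum_Z m_x(Z)\dim Z\,\theta_Z^4=\Delta=\sum_Z m_x(Z)\dim Z .
\]
Since the dimensions are positive and the $\theta_Z$ are roots of unity, this forces $\theta_Z^4=1$. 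With $j=2$ we get $\sum_Z m_x(Z)\dim Z\,\theta_Z^2=\pm\Delta$, so $\theta_Z^2$ is constant on the summands of $I(x)$. Lemma~\ref{lem:unit} shows that $F(C_0)\cong\one\oplus f\oplus g\oplus h\oplus 2k\rho$ contains every element of $G$, so $C_0$ is a summand of every $I(x)$. Since $C_0\subset I(\one)$, we have $\theta_{C_0}=1$, hence the constant value of $\theta_Z^2$ is $1$ and every such $Z$ has $\theta_Z=\pm1$. For $x=\one$ this is immediate from $\theta_{I(\one)}=\id$.

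\emph{Step 2: the twist-$1$ part of $I(f)$.} For $x\ne\one$, the case $j=1$ gives $\sum_Z m_x(Z)\dim Z\,\theta_Z=0$. Combined with $\FPdim I(x)=\Delta=8q$, the twist-$1$ summands of $I(x)$ have total weighted dimension $4q$. For $x=f$, Lemma~\ref{lem:unit} identifies $B$ (dimension $q$) and $C_0$ (dimension $2q$) as twist-$1$ summands of $I(f)$ containing $f$ once. These are the only ones among the six summands of $I(\one)$, since $F(A),F(C_g),F(C_h)$ do not contain $f$. The remaining twist-$1$ part therefore has weighted dimension $q=2+kd$.

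Every simple in $\D$ has integral dimension, so each forgetful image has dimension $a+bd$ with $a\ge[F(Z),f]\ge1$. Irrationality splits $2+kd$ into rational part $2$ and $d$-part $k$. Hence the remaining twist-$1$ summands have rational parts summing to $2$, each at least $m_f(Z)^2\ge1$. So they are either two objects (counted with multiplicity) with $F(P_i)\cong f\oplus b_i\rho$, or a single object with $m_f=1$ and rational part $2$. The latter would be $f\oplus y\oplus b\rho$ with $y$ invertible, or contain $f$ twice; I would rule both out with the Gram entry $M_{f,f}=6$ and the column counts of $F(I(f))$ from Lemma~\ref{lem:gram}. The same Gram bookkeeping, applied to the multiplicity of $f$ in $F(I(f))$, should also decide whether $P_1=P_2$ can occur, i.e.\ multiplicity $2$. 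Since $\sum_Z m_f(Z)^2=6$, I expect the twist $-1$ summands to account for the remainder and force $P_1\ne P_2$. Comparing $d$-parts gives $b_1+b_2=k$.

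\emph{Expected main obstacle.} The delicate points are, first, justifying $\nu_j(x)=\nu_2(x)^{j/2}$ for the invertible objects of order two. If that step fails, I would instead use the balancing identity \eqref{eq:balancing-general} with $C_0$ and $Z$ to constrain $\theta_Z$. Second, excluding the degenerate configurations in Step 2 requires checking the relevant entries of \eqref{eq:gram} row by row. I also need to interpret ``precisely two'' as applying to the element $f$ singled out in Lemma~\ref{lem:unit} (for $g,h$ the analogous count involves $C_g,C_h$), and to confirm that this is the intended reading.
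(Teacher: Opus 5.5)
Your Step~1 is sound, though it differs from the paper. Since $\nu_4(x)=\nu_2(x)^2=1$ for $x$ of order two, one gets $\theta_Z^4=1$ on $I(x)$. The second indicator then makes $\theta_Z^2$ constant on $I(x)$, and $C_0\subset I(x)$ normalizes that constant to $1$. The paper instead reads off $\nu_2(f)=1$ from the trivial associator on the pointed part, and treats $g,h$ by the argument below.

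The gap is in Step~2, in the case of a single residual twist-$1$ summand $Z$ of $I(f)$ with $F(Z)\cong f\oplus y\oplus k\rho$. The choice $y=\one$ is harmless, since $Z$ would then lie in $I(\one)$ and be $B$, which is already counted. For $y\in\{g,h\}$, however, the Gram bookkeeping you propose cannot produce a contradiction. In \eqref{eq:residual} the $(f,g)$ entry is $1$, and such a $Z$ uses it up exactly. After removing $Z$, the remaining $\{f,g,h,t\}$ block is still realized by nonnegative integer rows, for instance
\[
f\oplus h\oplus t,\; f\oplus t,\; f,\; g\oplus h,\; g\oplus t,\; g,\; h,\; h,
\]
together with rows having no invertible constituent that carry the remaining $t$-mass $7$. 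The entries of \eqref{eq:gram} involving $\rho$ are only aggregate totals, which can be met. So no check of $M_{f,f}=6$ or of the column counts of $F(I(f))$ will exclude this configuration.

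The missing idea is to apply your own $j=1$ identity to $x=g$ and $x=h$, not only to $x=f$. The summands of $I(\one)$ meeting $g$ are $C_0$ and $C_g$, each with $m_g=1$ and dimension $2q$. Together they already fill the whole twist-$1$ sector of $I(g)$, which has dimension $4q=\Delta/2$. Hence every summand of $I(g)$ outside $I(\one)$ has twist $-1$. The same holds for $I(h)$, with $C_0\oplus C_h$. This rules out $y\in\{g,h\}$ at once, because such a $Z$ would be a twist-$1$ summand of $I(y)$ outside $I(\one)$. Only your two-object case then remains.

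The same observation is what the statement's global count requires. ``Precisely two'' refers to all simples meeting $G$ outside $I(\one)$, so you must show there are no twist-$1$ ones meeting only $g$ or $h$; reinterpreting the claim as being about $f$ alone does not suffice. This saturation argument is exactly how the paper's proof opens.

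Finally, your concern about $P_1=P_2$ is unnecessary. Your own estimate says each residual summand contributes $m_f(Z)a_Z\ge m_f(Z)^2$ to the rational part $2$. So $m_f(Z)=2$, and likewise ``containing $f$ twice'', is already impossible.
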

\begin{proof}
The part of $I(g)$ supported on $I(\one)$ is $C_0\oplus C_g$.
It has dimension $4q=\Delta/2$ and twist $1$. The complementary
part has the same dimension and, by \eqref{eq:indicators}, twist
trace $-4q$. Positivity of dimensions and the fact that twists
have absolute value one force every twist on the complement to
equal $-1$. The same argument applies to $I(h)$.
The pointed subcategory of a Tambara--Yamagami category is
$\operatorname{Vec}_G$ with trivial associator. With its canonical
positive spherical structure, $\nu_2(f)=1$. Hence
$\Tr(\theta^2_{I(f)})=\Delta=\dim I(f)$, so all twists on $I(f)$
are $\pm1$. The total dimension of its positive-twist part is
$\Delta/2=4q$. Its part in $I(\one)$ is $B\oplus C_0$, of dimension
$3q$; the residual positive-twist part therefore has dimension $q$.

Subtracting the rows in \eqref{eq:unit-table} from
\eqref{eq:gram} leaves, on $f,g,h,t$, the matrix
\begin{equation}\label{eq:residual}
 R=
 \begin{pmatrix}
 4&1&1&2\\
 1&4&1&1\\
 1&1&4&1\\
 2&1&1&10
 \end{pmatrix}.
\end{equation}
Every entry in each of the residual $f,g,h$ columns is $0$ or $1$.
Indeed, an entry at least $2$ would exhaust the squared norm $4$,
and then its inner product with another invertible column could
not be $1$.

Here and in what follows, ``contains'' refers to the underlying object
after applying the forgetful functor $F$; a simple summand of $I(f)$ is
itself an object of the Drinfeld center. More precisely, let $Z$ be a
residual simple summand of $I(f)$ with positive twist. If $g$ occurs in
$F(Z)$, then Frobenius reciprocity gives
\[
 [I(g),Z]=[F(Z),g]>0.
\]
Thus $Z$ is also a residual summand of $I(g)$, and the preceding analysis
of the complementary part of $I(g)$ says that its twist is $-1$, a
contradiction. The same argument excludes $h$. Consequently, among the
invertible constituents of a residual positive-twist summand, only $f$
can occur. The residual column calculation therefore writes its
forgetful image in the form
\[
 F(Z)\cong f\oplus u_Z t\oplus b_Z \rho .
\]
The total dimension of all residual positive-twist summands is
$2+kd$. Comparing integer parts, a sum of positive odd integers
$1+2u_Z$ equals $2$. Hence there are exactly two such summands, and both
have $u_Z=0$. Comparing the $d$-coefficients proves
\eqref{eq:positive-f}.
\end{proof}

\begin{notation}\label{not:UVW}
For each $Z\in\Irr(\ZC)$ write
\[
 F(Z)\cong \bigoplus_{x\in G}m_x(Z) x\oplus u_Z t\oplus b_Z \rho,\qquad
 s_Z=\sum_{x\in G}m_x(Z),\qquad a_Z=s_Z+2u_Z.
\]
Thus $\dim Z=a_Z+b_Zd$, with nonnegative integer coefficients.
Partition $\Irr(\ZC)$ into
\[
 \U=\{Z:s_Z>0\},\quad
 \V=\{Z:s_Z=0,\ u_Z>0\},\quad
 \W=\{Z:s_Z=u_Z=0\}.
\]
The summands in $\W$ have images $b_Z \rho$, with $b_Z>0$.
If $\mathscr S$ is one of these sets, $I(X)|_{\mathscr S}$
denotes the corresponding direct summand of $I(X)$.
\end{notation}

\begin{lemma}\label{lem:supports}
Set $T=\sum_{Z\in\U}u_Z^2$. Then $T\in\{4,8\}$.
More precisely, the $\D$-parts of the negative-twist objects of $\U$
are, as a multiset, one of the following:
\begin{equation}\label{eq:supports}
\begin{array}{c|l|c}
 &\D\text{-parts}&T\\ \hline
 \mathrm{I}& f\oplus g\oplus t,\ f\oplus h\oplus t,\ g\oplus h,\ g,\ g,\ h,\ h&4\\
 \mathrm{II}& f\oplus g\oplus h,\ f\oplus 2 t,\ g\oplus t,\ h\oplus t,\ g,\ g,\ h,\ h&8\\
 \mathrm{III}& f\oplus g\oplus h\oplus t,\ f\oplus t,\ g,\ g,\ g,\ h,\ h,\ h&4
\end{array}
\end{equation}
Repeated entries denote distinct central simple objects; all
$\rho$-terms have been suppressed.
\end{lemma}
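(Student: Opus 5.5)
The plan is to read off the negative-twist objects of $\U$ from the residual Gram matrix $R$ in \eqref{eq:residual}, once the positive-twist objects $P_1,P_2$ of Proposition~\ref{prop:invertibles} have been removed.

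\emph{Step 1: the reduced Gram data.} Each $P_i$ has $\D$-part $f$, so each contributes $1$ to the $(f,f)$ entry of $R$ and nothing else. By Proposition~\ref{prop:invertibles}, every other simple summand of $\U$ outside $I(\one)$ has twist $-1$. For these negative-twist objects, the inner products of the $f,g,h$ columns are therefore
\[
\langle f,f\rangle=2,\quad \langle g,g\rangle=\langle h,h\rangle=4,\quad \langle f,g\rangle=\langle f,h\rangle=\langle g,h\rangle=1,
\]
with $t$-inner products $\langle f,t\rangle=2$ and $\langle g,t\rangle=\langle h,t\rangle=1$.

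As in the proof of Proposition~\ref{prop:invertibles}, every entry of the $f,g,h$ columns is $0$ or $1$. Hence:
\begin{itemize}
\item[] $f$ occurs in exactly two objects $X_1,X_2$;
\item[] $g$ and $h$ each occur in exactly four objects;
\item[] exactly one object contains both $g$ and $h$;
\item[] exactly one of the $X_i$ contains $g$, and exactly one contains $h$.
\end{itemize}

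\emph{Step 2: the dimension constraint.} The negative-twist part of $I(x)$ has dimension $4q=8+4kd$ for each $x\in\{f,g,h\}$. Comparing integer parts, with $a_Z=s_Z+2u_Z$ as in Notation~\ref{not:UVW}, gives
\[
\sum_{Z\ni x} a_Z=8\qquad (x\in\{f,g,h\}).
\]
Since all such $Z$ lie in $\U$ and multiplicities are $0$ or $1$, these become three linear equations in the $u_Z$ and $s_Z$ of the participating objects. The relation $\langle f,t\rangle=2$ forces $(u_{X_1},u_{X_2})\in\{(1,1),(2,0),(0,2)\}$, while $\langle g,t\rangle=\langle h,t\rangle=1$ says exactly one $g$-object and exactly one $h$-object carry $t$, each with $u=1$.

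\emph{Step 3: case split.} I would split according to whether $g$ and $h$ lie in the same $X_i$.
\begin{itemize}
\item[] If they lie in different $X_i$, the $g$ and $h$ in the $X_i$ meet the other columns, and the object containing both $g$ and $h$ must be a separate object $g\oplus h$. The $f$-equation then gives $u_{X_1}=u_{X_2}=1$. This forces the remaining $g$- and $h$-objects to have $a_Z=1$, namely $g,g,h,h$. This is case~I.
\item[] If they lie in the same $X_1$, so $X_1\supseteq f\oplus g\oplus h$, then either $u_{X_1}=1$ and $u_{X_2}=1$, or $u_{X_1}=0$ and $u_{X_2}=2$. In the second subcase the single $t$ on a $g$-object and on an $h$-object must sit on separate objects $g\oplus t$ and $h\oplus t$, giving case~II. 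In the first subcase those $t$'s are already used by $X_1$, and the sums force the remaining objects to be $g,g,g,h,h,h$, giving case~III.
\end{itemize}
Any other distribution violates one of the three integer-part sums. In each case $T=\sum_{\U}u_Z^2$ is then read off as $4$, $8$, $4$.

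\emph{Main obstacle.} The hardest part is making sure no objects in $\U$ containing $t$ are missed. In particular, one must check that every $Z\in\U$ contains some invertible, and so is counted by the column equations above; otherwise $T$ could acquire contributions not controlled by $R$. Objects with $s_Z=0$ lie in $\V$ by definition, so this is automatic. What needs care is the bookkeeping of the integer-part equations in Step~2, especially in case~II, where $u_{X_2}=2$ and the $f$-equation has to be balanced exactly.
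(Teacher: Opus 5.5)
Your proposal is correct and follows essentially the same route as the paper: remove $P_1,P_2$ from $R$, use the $0/1$ entries and pairwise inner products $1$, split on whether a single row contains $f,g,h$, and distribute the $t$-multiplicities using $\langle f,t\rangle=2$ and $\langle g,t\rangle=\langle h,t\rangle=1$, adding the contribution $2$ of $C_g,C_h$ to get $T$ (you should state this last contribution explicitly rather than just reading off $4,8,4$). Two small points. The integer-part equations of your Step~2 add nothing, since they follow from the Gram data as $\sum_y\langle x,y\rangle+2\langle x,t\rangle=8$. In case~I, $u_{X_1}=u_{X_2}=1$ comes from combining $u_{X_1}+u_{X_2}=2$ with the bounds $u_{X_i}\le1$ imposed by $\langle g,t\rangle=\langle h,t\rangle=1$, not from the $f$-equation alone.
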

\begin{proof}
Remove the two rows in \eqref{eq:positive-f} from
\eqref{eq:residual}. The remaining invertible columns have
$2,4,4$ entries equal to $1$, respectively; every pair has inner
product $1$. Their intersections either lie in a single row
containing $f,g,h$, or in three different rows containing
$f,g$, $f,h$, and $g,h$, respectively.

In the latter case the two rows containing $f$ have $t$-multiplicities
whose sum is $2$. Their individual multiplicities are at most $1$,
because the $gt$ and $ht$ inner products are both $1$. Both
multiplicities are thus $1$, and every other $t$-multiplicity in
these residual rows is $0$. This is pattern~I.

In the former case let $\alpha$ be the $t$-multiplicity in the
common triple row. The $gt$ and $ht$ inner products imply
$\alpha\in\{0,1\}$. The other row containing $f$ has
$t$-multiplicity $2-\alpha$. If $\alpha=0$, precisely one of the
three residual $g$-only rows and one of the three $h$-only rows
have $t$-multiplicity $1$, giving pattern~II. If $\alpha=1$,
all these multiplicities are $0$, giving pattern~III.

The rows $C_g,C_h$ in $I(\one)$ contribute $2$ to $T$, and the
objects $P_1,P_2$ contribute $0$. The three patterns contribute
$2,6,2$, respectively. This proves the displayed values.
\end{proof}

\begin{lemma}\label{lem:galois}
Choose a cyclotomic field containing the modular data of $\ZC$,
and an automorphism $\sigma$ whose restriction to $\Q(d)$ is
nontrivial. Let $\pi=\widehat{\sigma}$ be the induced permutation
of simple central objects. If $a_Z>0$, then
\begin{equation}\label{eq:reflection}
 m_x(\pi(Z))=m_x(Z),\quad u_{\pi(Z)}=u_Z,\quad
 b_{\pi(Z)}=ka_Z-b_Z.
\end{equation}
In particular, $\U$ and $\V$ are $\pi$-stable. Moreover,
\begin{equation}\label{eq:twist-galois}
 \theta_{\pi(Z)}=\sigma^2(\theta_Z)
\end{equation}
for every simple central $Z$. Thus $\theta_{\pi(Z)}=\theta_Z$
whenever $\theta_Z^8=1$.
\end{lemma}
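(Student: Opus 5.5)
We will use the standard Galois symmetry of modular data for the modular category $\ZC$ (de Boer--Goeree, Coste--Gannon, Dong--Lin--Ng). The normalized matrix $S=\Ss/\Delta$ has entries in a cyclotomic field. For $\sigma$ in its Galois group there is a permutation $\pi=\widehat\sigma$ of $\Irr(\ZC)$ and signs $\epsilon_\sigma(Z)\in\{\pm1\}$ such that
\[
\sigma(S_{Y,Z})=\epsilon_\sigma(Y)S_{\pi(Y),Z}=\epsilon_\sigma(Z)S_{Y,\pi(Z)}.
\]
It also gives the twist relation $\theta_{\pi(Z)}=\sigma^2(\theta_Z)$, once the twists are normalized as in \eqref{eq:indicators}, that is, for the canonical twist of the center of a spherical category, where no central-charge correction appears. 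This twist relation is exactly \eqref{eq:twist-galois}. The final clause then follows directly: if $\theta_Z^8=1$, then $\theta_Z\in\Q(\zeta_8)$, and $\sigma^2$ acts trivially on $\Q(\zeta_8)$ because $\Gal(\Q(\zeta_8)/\Q)\cong(\Z/2)^2$ has exponent two.

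For \eqref{eq:reflection}, we read off $F(Z)$ from the $S$-matrix row of $Z$ against the character objects $Z_\chi$. By \eqref{eq:central-character}, $S_{Z_\chi,W}=\chi(F(W))/f_\chi$. The idea is to apply $\sigma$ to this identity and use that $\sigma$ swaps $d\leftrightarrow d_-$. So $\sigma$ interchanges $\chi_+$ and $\chi_-$, fixes $\eta$ and the $\eta_\psi$ (whose values are rational, $\pm1$ or $0$), and hence permutes the formal codegrees accordingly. Since the objects $Z_\chi$ have $\theta=1$ and are determined by their characters, $\pi$ permutes them in the same way: $\pi(Z_{\chi_\pm})=Z_{\chi_\mp}$, and $\pi$ fixes the others. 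We then need the signs. Take the row of the unit: the entry $S_{\one,Z_\chi}=\dim Z_\chi/\Delta$ is sent by $\sigma$ to $\epsilon\,S_{\pi(\one),\dots}$. Here we will use the symmetric form with $Z_\chi$ in the first slot, obtaining
\[
\sigma\bigl(\chi(F(W))/f_\chi\bigr)=\epsilon_\sigma(W)\,\chi^\sigma(F(\pi(W)))/f_{\chi^\sigma}.
\]
Note that $\sigma(\chi(F(W)))=\chi^\sigma(F(W))$ because $F(W)$ has integer coefficients.

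Now apply this with $\chi=\eta_\psi$ and $\chi=\eta$, whose values and codegrees are rational. For every $\psi$ it gives $\psi(F(W))=\epsilon\,\psi(F(\pi W))$ on the invertible part, and $\eta(F(W))=\epsilon\,\eta(F(\pi W))$. The sign $\epsilon=\epsilon_\sigma(W)$ is fixed first with the trivial character $\chi_0$ of $G$ (which is $\eta$ restricted to $G$), using the total $s_W$. When $a_Z>0$ we still need $\epsilon=+1$. For this we use $\chi_+$ and $\chi_-$. The ratio $\chi_+(F(Z))/f_{\chi_+}$ is mapped to $\chi_-(F(Z))/f_{\chi_-}$, and the latter must equal $\epsilon\,\chi_-(F(\pi Z))/f_{\chi_-}$. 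Writing $\chi_-(F(Z))=a_Z+b_Zd_-$ and $\chi_+(F(\pi Z))=a_{\pi Z}+b_{\pi Z}d$, compare the rational and $d$-parts. From $\chi_-$ we get $a_Z+b_Zd_-=\epsilon(a_{\pi Z}+b_{\pi Z}d_-)$, with $d_-=8k-d$ in the present normalization, since $d+d_-=\kappa=8k$. Thus
\[
a_Z+8kb_Z-b_Zd=\epsilon\bigl(a_{\pi Z}+8kb_{\pi Z}-b_{\pi Z}d\bigr).
\]
Comparing $d$-coefficients gives $b_Z=\epsilon b_{\pi Z}$. If $\epsilon=-1$ with nonnegative coefficients, then $b_Z=b_{\pi Z}=0$ and $a_Z=-a_{\pi Z}$, forcing $a_Z=0$. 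So $a_Z>0$ forces $\epsilon=1$.

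With $\epsilon=1$, the $\psi$-equations together with the trivial character give $m_x(\pi Z)=m_x(Z)$ by Fourier inversion on $G$. The $\eta$-equation, $s-2u$, then gives $u_{\pi Z}=u_Z$. I would handle the relation for $b$ by using $\chi_+$ in the first slot rather than $\chi_-$, which yields the affine reflection $b\mapsto ka-b$. I expect this bookkeeping of which slot and which codegree ratio to use to be the main obstacle. The codegree ratio $f_{\chi_+}/f_{\chi_-}=d^2/d_-^2$ enters, and with $d/d_-=-d^2/8$ it should turn $a+bd_-$ into the reflected combination $a_{\pi Z}+(ka_Z-b_Z)d$. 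I would verify this using $d^2=8kd+8$. The stability of $\U$ and $\V$ is then immediate, since $s$ and $u$ are preserved and both sets have $a_Z>0$.
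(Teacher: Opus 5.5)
Your treatment of \eqref{eq:twist-galois} and of the final clause is fine and matches the paper. The gap is in \eqref{eq:reflection}, at the step you yourself flagged as the main obstacle. Your displayed identity
\[
\sigma\bigl(\chi(F(W))/f_\chi\bigr)=\epsilon_\sigma(W)\,\chi^\sigma(F(\pi(W)))/f_{\chi^\sigma}
\]
applies the permutation to both slots of $S_{Z_\chi,W}$ at once: its right side is $\epsilon_\sigma(W)S_{\pi(Z_\chi),\pi(W)}$. Galois symmetry moves only one slot. Either $\sigma(S_{Z_\chi,W})=\epsilon_\sigma(W)S_{Z_\chi,\pi(W)}$, or $\sigma(S_{Z_\chi,W})=\epsilon_\sigma(Z_\chi)S_{\pi(Z_\chi),W}$, and the latter carries no information about $\pi(W)$. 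For the rational characters $\eta,\eta_\psi$ the two readings coincide, so those equations survive. For $\chi_\pm$ your version is false.

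The resulting equation $a_Z+b_Zd_-=\epsilon(a_{\pi Z}+b_{\pi Z}d_-)$ would give, for $\epsilon=1$, $a_{\pi Z}=a_Z$ and $b_{\pi Z}=b_Z$. That contradicts the reflection you are trying to prove. It also contradicts your own (correct) observation $\pi(\one)=\pi(Z_{\chi_+})=Z_{\chi_-}=A$, since $b_A=k\neq0=b_{\one}$. So neither $\epsilon_Z=1$ nor $b_{\pi(Z)}=ka_Z-b_Z$ is actually established; the latter is only announced. The codegree ratio you propose for it is also wrong: $f_{\chi_+}/f_{\chi_-}=d^2/8=1+kd=\Delta/\sigma(\Delta)$, not $d^2/d_-^2$.

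The missing computation is the paper's. Use the unit row, with $Z_{\chi_+}=\one$ in the first slot, and let $\pi$ act on $Z$. Then $\sigma(\dim Z/\Delta)=\epsilon_Z\dim\pi(Z)/\Delta$, so
\[
\epsilon_Z\dim\pi(Z)=(1+kd)\,\sigma(a_Z+b_Zd)=(1+kd)\bigl(a_Z+b_Z(8k-d)\bigr)=a_Z+(ka_Z-b_Z)d
\]
by $d^2=8kd+8$. Comparing coefficients of $1$ and $d$ gives $\epsilon_Za_{\pi(Z)}=a_Z$. Hence $a_Z>0$ forces $\epsilon_Z=1$, and then $a_{\pi(Z)}=a_Z$ and $b_{\pi(Z)}=ka_Z-b_Z$ follow together.

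Only after this can the rational rows be used. With $\epsilon_Z=1$, the $\eta$-row gives invariance of $s_Z-2u_Z$. Together with $a_Z=s_Z+2u_Z$, this yields $s_Z$ and $u_Z$, and the three $\eta_\psi$-rows then give the $m_x$ by Fourier inversion. There is no character of $K_0(\C)$ equal to $1$ on $G$ and $0$ on $t,\rho$, since it would violate $t^2=\sum_g g$. So $s_Z$ cannot be fixed by a ``trivial character of $G$'' as you suggest; it has to come from the dimension equation.
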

\begin{proof}
Use the normalization $S=\Ss/\Delta$ from
Section~\ref{sec:preliminaries}. Galois symmetry gives signs
$\epsilon_Z\in\{\pm1\}$ such that
\begin{equation}\label{eq:S-galois}
 \sigma(S_{X,Z})=\epsilon_ZS_{X,\pi(Z)}.
\end{equation}
The twist identity \eqref{eq:twist-galois} holds for the canonical
modular representation of a Drinfeld center
\cite[Section~4.1 and Lemma~4.2]{DLN}.
One has $\sigma(d)=8k-d$ and
$\Delta/\sigma(\Delta)=1+kd$. The unit row of
\eqref{eq:S-galois} therefore gives
\[
 \epsilon_Z\dim\pi(Z)
 =(1+kd)\sigma(a_Z+b_Zd)=a_Z+(ka_Z-b_Z)d.
\]
If $a_Z>0$, the sign must be $+1$: otherwise comparison of the
integer coefficients of $1,d$ would give $a_{\pi(Z)}=-a_Z<0$.
Hence $a_{\pi(Z)}=a_Z$ and $b_{\pi(Z)}=ka_Z-b_Z$.

To recover the full $\D$-part, recall the character interpretation
of the summands $A_\psi$ of $I(\one)$
\cite[Section~2.3 and Theorem~2.13]{Ostrik}:
\[
 S_{A_\psi,Z}=\frac{\psi(F(Z))}{f_\psi}.
\]
For the character associated with $B$ this becomes
$S_{B,Z}=(s_Z-2u_Z)/8$. For each nontrivial character
$\eta:G\to\{\pm1\}$, it becomes
$S_{A_\eta,Z}=\frac14\sum_{x\in G}\eta(x)m_x(Z)$.
All four expressions are rational. Since $\epsilon_Z=1$,
\eqref{eq:S-galois} shows that they are unchanged under $\pi$.
Together with $a_Z=s_Z+2u_Z$, the first expression determines
$s_Z,u_Z$; Fourier inversion on $G$ then determines all $m_x(Z)$.
This proves \eqref{eq:reflection}. Finally every automorphism of
the field of eighth roots acts by an odd power, whose square is
$1$ modulo $8$, proving the last assertion.
\end{proof}

\begin{proposition}\label{prop:t}
With Notation~\ref{not:UVW},
\begin{equation}\label{eq:t-moments}
 \sum_{\U}u_Z^2=4,\qquad
 \sum_{\V}u_Z^2=8,\qquad
 \sum_{\V}u_Zb_Z=8k.
\end{equation}
Furthermore,
\begin{equation}\label{eq:t-traces}
 \nu_2(t)=1,\qquad
 \dim I(t)|_{\V}=\Delta,\qquad
 \Tr(\theta_{I(t)|_{\V}})=
 \Tr(\theta^2_{I(t)|_{\V}})=0.
\end{equation}
Every object of $\U\cup\V$ has twist of order dividing \(8\).
\end{proposition}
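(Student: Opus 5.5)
The plan is to compare the Gram data of Lemma~\ref{lem:gram} with the indicator traces \eqref{eq:indicators} for $X=t$. The set $\U$ is already essentially known: its twists are $\pm1$ by Proposition~\ref{prop:invertibles}, and its $\D$-parts are listed in \eqref{eq:supports}. So every $\U$-contribution can be computed explicitly, and the information about $\V$ is obtained by subtraction.

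First, for the moments, $M_{t,t}=12$ and $M_{t,\rho}=16k$ give
\[
 \sum_{Z}u_Z^2=12,\qquad \sum_Z u_Zb_Z=16k.
\]
Since $u_Z=0$ on $\W$, only $\U\cup\V$ contributes. Lemma~\ref{lem:supports} gives $\sum_{\U}u_Z^2=T\in\{4,8\}$. Hence the first two identities in \eqref{eq:t-moments} reduce to excluding pattern~II.

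For the third identity, I would compute $\sum_{\U}u_Zb_Z$ directly. The $I(\one)$-rows $C_g,C_h$ contribute $2k+2k$. For the remaining $t$-carrying objects of $\U$, I would use the Galois reflection \eqref{eq:reflection}. Since $a_Z>0$ there, $\pi$ fixes the $\D$-part and sends $b_Z$ to $ka_Z-b_Z$. Pairing $Z$ with $\pi(Z)$, or using $b_Z=ka_Z/2$ when $\pi(Z)=Z$, yields sums of $b_Z$ in terms of the known $a_Z$. I expect $\sum_{\U}u_Zb_Z=8k$ in patterns~I and~III, which gives $\sum_{\V}u_Zb_Z=8k$.

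Next come the traces. We have $\dim I(t)=2\Delta$ and $\Tr\theta_{I(t)}=\nu_1(t)\Delta=0$. The $\U$-part of $I(t)$ consists of $C_g,C_h$, with twist $1$ and total dimension $4q$, together with the negative-twist $t$-carriers. Using the $b$-values from the reflection, the latter should also have total dimension $u$-weighted $4q$. Then $\dim I(t)|_{\U}=8q=\Delta$ and $\Tr\theta_{I(t)|_{\U}}=4q-4q=0$. It follows that $\dim I(t)|_{\V}=\Delta$ and $\Tr\theta_{I(t)|_{\V}}=0$.

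For $\theta^2$, the $\U$-part contributes $\dim I(t)|_{\U}=\Delta$, since all its twists are $\pm1$. Hence
\[
 \Tr\theta^2_{I(t)|_{\V}}=(\nu_2(t)-1)\Delta.
\]
Since $t$ is self-dual, $\nu_2(t)=\pm1$. If $\nu_2(t)=-1$, this trace would equal $-2\Delta=-2\dim I(t)|_{\V}$, which is impossible because twists have modulus one. So $\nu_2(t)=1$ and the trace vanishes. This is also where pattern~II should be excluded. In that pattern the $u$-weighted $\U$-dimension changes, since $f\oplus 2t$ carries $u=2$, and the same modulus-one/positivity comparison of $\dim$ against $|\Tr\theta^2|$ should become inconsistent. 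A fallback is to check directly that pattern~II violates $\sum_{\U}u_Zb_Z\le 16k$ once the reflection constraints are imposed.

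Finally, the twist orders. On $\U$ twists are $\pm1$. For $Z\in\V$, I would use the balancing identity \eqref{eq:balancing-general} with $V=Z$ and $W=B$ or $W=A_\eta$. The left side $\Ss_{Z,W}$ is rational by \eqref{eq:central-character}, and the tensor products $Z\otimes W$ decompose partly into $\U$-objects with twist $\pm1$. The expected outcome is a relation forcing $\theta_Z^2$ to be a fourth root of unity. The alternative route is to use that $Z$ restricts into the centralizer of $\operatorname{Vec}_G$, where twists come from $\Z(\D)$ for a rank-five Tambara--Yamagami $\D$, and those are eighth roots of unity. I expect this last step, and the clean exclusion of pattern~II, to be the main obstacle. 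I would try the $\Z(\D)$-restriction argument first, since its twists are known explicitly.
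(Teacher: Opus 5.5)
Your reduction of the first two moment identities to excluding pattern~II, the reflection pairing $2\sum_\U u_Zb_Z=k\sum_\U u_Za_Z=k(8+2T)$, and the $\nu_2(t)$ argument all match the paper's proof. Two steps, however, do not work as you describe them. \emph{Pattern~II.} The comparison you propose, $\dim$ against $|\Tr\theta^2|$, gives no contradiction in that pattern. With $T=8$ one has $\dim I(t)|_\U=(4+T)q=12q$, so $\dim I(t)|_\V=4q$ and $\Tr\theta^2_{I(t)|_\V}=8q\,\nu_2(t)-12q$. For $\nu_2(t)=1$ this equals $-4q$, which only says that every $\V$-twist in $I(t)$ satisfies $\theta_Z^2=-1$. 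Your fallback also fails: pattern~II gives $\sum_\U u_Zb_Z=12k\le16k$, leaving $\sum_\V u_Z^2=4$ and $\sum_\V u_Zb_Z=4k$, and there is no Gram contradiction. What excludes pattern~II is the \emph{first} trace. The positive-twist part of $I(t)|_\U$ is $C_g\oplus C_h$, of dimension $4q$, so $\Tr\theta_{I(t)|_\U}=4q-8q$. Then $\nu_1(t)=0$ forces $\Tr\theta_{I(t)|_\V}=4q=\dim I(t)|_\V$. Saturation makes all these twists equal to $1$, so $\Tr\theta^2_{I(t)}=\dim I(t)=2\Delta$, contradicting $\nu_2(t)=\pm1$. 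Your trace computations in patterns~I and~III presuppose $T=4$, so this step has to be carried out for general $T$ first, as the paper does via $\dim I(t)|_\U=(4+T)q$.

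\emph{Twist orders on $\V$.} Neither route you sketch gives $\theta_Z^8=1$. Balancing with $B$ or with $C_0,C_g,C_h$ yields relations such as $\sum_Y[Z\otimes C_0,Y]\theta_Y\dim Y=0$, because $\Ss_{Z,C_0}=0$ for $Z\in\V$. These relations involve the unknown twists of the $\V$- and $\W$-constituents of the product and do not isolate $\theta_Z$. Restricting half-braidings to $\D$ lands in $\mathcal Z(\D)$ only when $F(Z)\in\D$. Objects $Z\in\V$ with $b_Z>0$ exist, since $\sum_\V u_Zb_Z=8k>0$. For them the twist is built from $c_{Z,Z}$, hence from the half-braiding with $\rho$, which that restriction forgets. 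The way to import information from $\D$ is through the indicators of $t$, which are intrinsic to $\D$. Shimizu's formula gives $\nu_4(t)=\frac12\sum_{x\in G}\chi(x,x)^{-1}\in\{0,2\}$. The $\U$-part of $I(t)$ has twists $\pm1$ and dimension $\Delta$, so $\Tr\theta^4_{I(t)|_\V}=(\nu_4(t)-1)\Delta=\pm\Delta$. Since $\dim I(t)|_\V=\Delta$, all $\theta_Z^4$ with $Z\in\V$ must equal the same sign, which gives $\theta_Z^8=1$.
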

\begin{proof}
The matrix \eqref{eq:gram} gives $\sum_\U s_Zu_Z=8$.
By Galois reflection and $\pi$-stability of $\U$,
\[
 2\sum_\U u_Zb_Z
 =k\sum_\U u_Za_Z=k(8+2T).
\]
Consequently,
\begin{equation}\label{eq:U-t-dimension}
 \dim I(t)|_\U
 =\sum_\U u_Z(a_Z+b_Zd)
 =8+2T+(4k+kT)d=(4+T)q.
\end{equation}
The positive-twist part of $I(t)|_\U$ consists exactly of
$C_g,C_h$, and has dimension $4q$. Since $\dim I(t)=16q$,
\begin{equation}\label{eq:V-t-dimension}
 \dim I(t)|_\V=(12-T)q,\qquad
 \Tr(\theta_{I(t)|_\V})=(T-4)q.
\end{equation}
If $T=8$, the two quantities in \eqref{eq:V-t-dimension} both
equal $4q$, so every twist on $I(t)|_\V$ is $1$. All twists
on $I(t)|_\U$ already square to $1$. This gives
$\Tr(\theta^2_{I(t)})=2\Delta$, contradicting
$\nu_2(t)\in\{\pm1\}$. Lemma~\ref{lem:supports} therefore gives
$T=4$.

Now both parts of $I(t)$ have dimension $\Delta$, and its
$\V$-part has first twist trace $0$. Its $\U$-part contributes
$\Delta$ to the second trace. The value $\nu_2(t)=-1$ would
force the remaining trace to be $-2\Delta$, impossible for an
object of dimension $\Delta$. Thus $\nu_2(t)=1$ and the
remaining second trace is $0$.
The $(t,t)$ and $(t,\rho)$ entries of \eqref{eq:gram} now give
$\sum_\V u_Z^2=12-4=8$ and
$\sum_\V u_Zb_Z=16k-(4k+4k)=8k$.

The pivotal structure restricts to the canonical positive
structure on $\D$. Shimizu's formula
\cite[Theorem~3.4 and the formula immediately following it]{Shimizu}
gives
\[
 \nu_4(t)=\frac12\sum_{x\in G}\chi(x,x)^{-1}\in\{0,2\},
\]
where $\chi$ is the symmetric bicharacter defining $\D$.
Indeed $G$ has exponent $2$, so $x\mapsto\chi(x,x)$ is a
character with values in $\{\pm1\}$, and its sum is $0$ or $4$.
The $\U$-part of $I(t)$ contributes $\Delta$ to the fourth
trace. Hence
\[
 \Tr(\theta^4_{I(t)|_\V})=(\nu_4(t)-1)\Delta=\pm\Delta.
\]
As $\dim I(t)|_\V=\Delta$, all its fourth powers equal the
same sign. Thus $\theta_Z^8=1$ for $Z\in\V$; the assertion
for $\U$ follows from Proposition~\ref{prop:invertibles}.
\end{proof}

\begin{lemma}\label{lem:Q}
Let $Q=\sum_{Z\in\U}b_Z^2$. Then
\begin{equation}\label{eq:Q}
 \sum_\U a_Z^2=96,\qquad
 \sum_\U a_Zb_Z=48k,\qquad
 Q\geq\frac{49}{2}k^2.
\end{equation}
\end{lemma}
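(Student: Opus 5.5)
The plan is to compute the two sums $\sum_\U a_Z^2$ and $\sum_\U a_Zb_Z$ directly from the Gram matrix \eqref{eq:gram} and the moments already obtained. For the bound on $Q$ we will use the Galois reflection of Lemma~\ref{lem:galois}, and extract the extra $\tfrac12k^2$ from the orbit $\{\one,A\}$.

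\emph{First identity.} Expand $a_Z^2=s_Z^2+4s_Zu_Z+4u_Z^2$ and treat the three sums separately.
\begin{itemize}
\item Every $Z$ with $s_Z>0$ lies in $\U$. Hence $\sum_\U s_Z^2=\sum_Z s_Z^2=\sum_{x,y\in G}M_{x,y}$, and \eqref{eq:gram} gives $4\cdot 6+12\cdot 2=48$.
\item Likewise $\sum_\U s_Zu_Z=\sum_{x\in G}M_{x,t}=8$.
\item Proposition~\ref{prop:t} gives $\sum_\U u_Z^2=T=4$.
\end{itemize}
Therefore $\sum_\U a_Z^2=48+32+16=96$.

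\emph{Second identity.} By Lemma~\ref{lem:galois}, $\pi$ permutes $\U$ and satisfies $a_{\pi(Z)}=a_Z$ and $b_{\pi(Z)}=ka_Z-b_Z$. Reindexing the sum along $\pi$ gives
\[
\sum_\U a_Zb_Z=\sum_\U a_Z(ka_Z-b_Z).
\]
Hence $2\sum_\U a_Zb_Z=k\sum_\U a_Z^2=96k$, so $\sum_\U a_Zb_Z=48k$. As a cross-check, the Gram entries give $\sum_\U s_Zb_Z=\sum_x M_{x,\rho}=32k$. The proof of Proposition~\ref{prop:t}, with $T=4$, gives $\sum_\U u_Zb_Z=8k$. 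These combine to $32k+2\cdot 8k=48k$.

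\emph{The bound on $Q$.} For each $Z\in\U$ we have $b_Z+b_{\pi(Z)}=ka_Z$, which yields
\[
b_Z^2+b_{\pi(Z)}^2=\tfrac12k^2a_Z^2+\tfrac12\bigl(b_Z-b_{\pi(Z)}\bigr)^2 .
\]
Sum this over $Z\in\U$. Since $\pi$ is a bijection of $\U$, the left side totals $2Q$. This gives
\[
2Q=48k^2+\tfrac12\sum_\U\bigl(b_Z-b_{\pi(Z)}\bigr)^2 .
\]
It remains to show that the error sum is at least $k^2$. For this we identify $\pi$ on $\one$ and $A$.
\begin{itemize}
\item For $Z=\one$ we have $a=1$ and $b=0$. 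So $F(\pi(\one))\cong\one\oplus k\rho$, and $\pi(\one)$ contains $\one$.
\item The summands of $I(\one)$ have multiplicity one, and by Lemma~\ref{lem:unit} the only one with this forgetful image is $A$. Hence $\pi(\one)=A$.
\item Symmetrically, $\pi(A)$ has $a=1$ and $b=k\cdot1-k=0$, so $\pi(A)=\one$.
\end{itemize}
These two terms alone contribute $\tfrac12(k^2+k^2)=k^2$ to the error sum. Hence $2Q\ge 49k^2$, which is the claimed bound $Q\geq\tfrac{49}{2}k^2$.

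The only delicate point is this identification of $\pi(\one)$ and $\pi(A)$. It rests on Lemma~\ref{lem:galois} preserving the full $\D$-part, including $m_\one$, and on the multiplicity-one statement of Lemma~\ref{lem:unit}.
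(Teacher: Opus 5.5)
Your proof is correct and follows the paper's route. The same Gram-matrix expansion gives $\sum_\U a_Z^2=96$, the same reindexing along $\pi$ gives $\sum_\U a_Zb_Z=48k$, and since $b_Z-b_{\pi(Z)}=2b_Z-ka_Z$, your pairing identity is exactly the paper's completed square $Q=24k^2+\sum_\U(b_Z-\tfrac{k}{2}a_Z)^2$, with $\one$ and $A$ supplying the extra $k^2/2$. Your identification $\pi(\one)=A$, $\pi(A)=\one$ is correct but unnecessary: each term $(2b_Z-ka_Z)^2$ depends only on $(a_Z,b_Z)$, and it equals $k^2$ for both $Z=\one$ and $Z=A$.
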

\begin{proof}
The invertible block of \eqref{eq:gram} gives
$\sum_\U s_Z^2=4\cdot6+12\cdot2=48$.
Also $\sum_\U s_Zu_Z=8$ and
$\sum_\U u_Z^2=4$ by Proposition~\ref{prop:t}.
Therefore
\[
 \sum_\U a_Z^2
 =\sum_\U(s_Z+2u_Z)^2=48+32+16=96.
\]
Lemma~\ref{lem:galois} implies
$2\sum_\U a_Zb_Z=k\sum_\U a_Z^2=96k$, and hence
\[
 Q=24k^2+\sum_\U\left(b_Z-\frac{k a_Z}{2}\right)^2.
\]
The objects $\one,A$ have $(a_Z,b_Z)=(1,0),(1,k)$,
respectively, and together contribute $k^2/2$ to the final sum.
\end{proof}

\begin{proposition}\label{prop:roots}
Put $p=b_1^2+b_2^2$, with $b_1,b_2$ as in
\eqref{eq:positive-f}, and let $\nu=\nu_2(\rho)\in\{\pm1\}$.
There is a multiset of roots of unity $\zeta_1,\ldots,\zeta_L$
such that
\begin{align}
 L&=16+56k^2-Q\leq16+\frac{63}{2}k^2,\label{eq:L}\\
 \sum_{i=1}^{L}\zeta_i
   &=Q-24k^2-2p-kr,\label{eq:first-sum}\\
 \sum_{i=1}^{L}\zeta_i^2
   &=24k^2-Q+(2\nu-6k)r.\label{eq:second-sum}
\end{align}
\end{proposition}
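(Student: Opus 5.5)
The plan is to extract the roots of unity from the two trace identities for $I(\rho)$ given by \eqref{eq:indicators}. Since $\rho\neq\one$, these are
\[
\Tr(\theta_{I(\rho)})=0,\qquad \Tr(\theta^2_{I(\rho)})=\nu\Delta,
\]
with $\nu=\nu_2(\rho)=\pm1$ because $\rho$ is self-dual. By adjunction \eqref{eq:induction}, a simple central $Z$ occurs in $I(\rho)$ with multiplicity $b_Z$. Hence
\[
\Tr(\theta^j_{I(\rho)})=\sum_Z b_Z\,\theta_Z^j\,(a_Z+b_Zd).
\]
I will split this sum over $\U\cup\V\cup\W$ (Notation~\ref{not:UVW}).

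\emph{Known twists.} On $\U$ the twists are known up to sign. They equal $1$ on the six summands of $I(\one)$ and on $P_1,P_2$, and they equal $-1$ on the remaining objects of $\U$, by Proposition~\ref{prop:invertibles}. So the $\U$-contribution to the first trace is an explicit element of $\Z+\Z d$. It is computed from $\sum_\U a_Zb_Z=48k$ and $Q$ (Lemma~\ref{lem:Q}), together with the positive-twist data from \eqref{eq:unit-table} and \eqref{eq:positive-f}; the latter is where $p=b_1^2+b_2^2$ enters. For the second trace every $\U$-twist squares to $1$, so its $\U$-part is just $\sum_\U b_Z(a_Z+b_Zd)$, which is again explicit.

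\emph{Unknown twists.} On $\W$ one has $a_Z=0$, so each $Z\in\W$ contributes $b_Z^2\,d\,\theta_Z^j$. These are the natural source of the $\zeta_i$: take $\theta_Z$ repeated $b_Z^2$ times. For $\V$ we have $a_Z=2u_Z>0$, so Lemma~\ref{lem:galois} makes $\V$ stable under the reflection $b\mapsto 2ku_Z-b$, and twists are preserved because $\theta_Z^8=1$ (Proposition~\ref{prop:t}). Pairing $Z$ with $\pi(Z)$ and using \eqref{eq:t-moments}, I split $\sum_\V b_Z^2$ as
\[
\sum_\V b_Z^2=8k^2+\sum_\V(b_Z-ku_Z)^2 .
\]
The symmetric part $ku_Z$ of each $\V$-contribution should then be expressible through the traces of $I(t)|_\V$ in \eqref{eq:t-traces}. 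Those traces vanish in both the first and second powers, so this part drops out of both identities. The antisymmetric remainder is absorbed into the multiset with multiplicities $(b_Z-ku_Z)^2$. The count is then
\[
L=\sum_\W b_Z^2+\sum_\V(b_Z-ku_Z)^2=(16+64k^2)-Q-8k^2,
\]
using the $(\rho,\rho)$ entry $16+64k^2$ of \eqref{eq:gram}. This gives $L=16+56k^2-Q$, and the inequality in \eqref{eq:L} follows from $Q\ge\frac{49}{2}k^2$ in \eqref{eq:Q}.

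\emph{Extracting the sums.} Divide each trace identity by $d$ and rewrite everything in the basis $\{1,r\}$ of $\Q(r)$, using $d=4k+r$, $d^2=8kd+8$ and $\Delta=8q=16+8kd$. The first identity then isolates $\sum\zeta_i$, and the second, with the term $\nu\Delta/d$, isolates $\sum\zeta_i^2$. One should obtain exactly the right-hand sides of \eqref{eq:first-sum} and \eqref{eq:second-sum}; the coefficient $2\nu-6k$ of $r$ in the latter comes from $\Delta/d$ and the $\U$-terms.

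The main obstacle is the bookkeeping on $\V$: showing that after Galois pairing its contribution really takes the form "root of unity times a square multiplicity", with the cross terms cancelled by the vanishing $I(t)|_\V$ traces. If that does not work directly, my fallback is to leave each $Z\in\V$ as an explicit term $b_Z(2u_Z+b_Zd)\theta_Z$ and use $\pi$-invariance to symmetrize it, which forces the irrational part into the shape above. I would confirm the normalization of the $\U$-constants by testing the identities at small $k$.
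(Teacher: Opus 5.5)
Your proposal is correct and follows the paper's proof. It uses the same two $I(\rho)$ trace identities, the same multiset ($b_Z^2$ copies of $\theta_Z$ on $\W$ and $(b_Z-ku_Z)^2$ copies on $\V$), and the same count via the $(\rho,\rho)$ Gram entry. Your computation $\sum_\U\theta_Za_Zb_Z=2\cdot 28k-48k=8k$ from the explicit positive-twist rows is a harmless variant of the paper's twist-preserving reflection on $\U$.

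On $\V$ the mechanism is slightly different from how you phrase it. Since $\pi$ preserves $u_Z$ and $\theta_Z$ and negates $\delta_Z=b_Z-ku_Z$, one gets $\sum_\V u_Z\delta_Z\theta_Z^j=0$, and this is what kills the cross terms. Only then do the vanishing $I(t)|_\V$ traces reduce to $q\sum_\V u_Z^2\theta_Z^j=0$, which kills the symmetric part, exactly as in the paper.
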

\begin{proof}
By Lemma~\ref{lem:galois} and Proposition~\ref{prop:t}, the
Galois reflection preserves twists on $\U\cup\V$. In particular,
it preserves each twist class in $\V$. Since $a_Z=2u_Z$ there,
for each root of unity $\zeta$ we have
\[
 \sum_{\substack{Z\in\V\\\theta_Z=\zeta}}u_Zb_Z
 =k\sum_{\substack{Z\in\V\\\theta_Z=\zeta}}u_Z^2.
\]
For $j=1,2$, the zero traces in \eqref{eq:t-traces} therefore give
\[
 0=\sum_\V u_Z(2u_Z+b_Zd)\theta_Z^j
   =(2+kd)\sum_\V u_Z^2\theta_Z^j.
\]
It follows that
\begin{equation}\label{eq:V-vanishing}
 \sum_\V u_Z^2\theta_Z^j
 =\sum_\V u_Zb_Z\theta_Z^j=0
 \qquad (j=1,2).
\end{equation}
For $Z\in\V$ put $\delta_Z=b_Z-ku_Z\in\Z$. Then
\begin{equation}\label{eq:V-delta}
 \frac1d\Tr(\theta^j_{I(\rho)|_\V})
 =\sum_\V b_Z^2\theta_Z^j
 =\sum_\V\delta_Z^2\theta_Z^j
 \qquad (j=1,2),
\end{equation}
where both equalities use \eqref{eq:V-vanishing}.

Take $b_Z^2$ copies of $\theta_Z$ for each $Z\in\W$ and
$\delta_Z^2$ copies for each $Z\in\V$. These nonnegative
integer multiplicities define the required auxiliary multiset.
Using the $\rho\rho$ entry of \eqref{eq:gram} and
\eqref{eq:t-moments}, its cardinality is
\begin{align*}
 L&=\sum_\W b_Z^2+\sum_\V(b_Z-ku_Z)^2\\
  &=16+64k^2-Q-2k(8k)+k^2(8)
    =16+56k^2-Q.
\end{align*}
Lemma~\ref{lem:Q} proves the inequality in \eqref{eq:L}.

The positive-twist objects of $\U$ are precisely
$\one,A,B,C_0,C_g,C_h,P_1,P_2$. Their $a_Z^2$-sum is
$1+1+4+16+16+16+1+1=56$.
Thus $\sum_\U\theta_Za_Z^2=56-(96-56)=16$.
Twist-preserving Galois reflection implies
\[
 \sum_\U\theta_Za_Zb_Z
 =\frac{k}{2}\sum_\U\theta_Za_Z^2=8k.
\]
The positive-twist $b_Z^2$-sum is $14k^2+p$, so
$\sum_\U\theta_Zb_Z^2=28k^2+2p-Q$.
Consequently the first trace identity for $I(\rho)$ is
\[
 0=8k+(28k^2+2p-Q)d+d\sum_{i=1}^L\zeta_i.
\]
Since $8/d=d-8k$ and $d=4k+r$, this is
\eqref{eq:first-sum}.

For the second trace, every twist in $\U$ squares to $1$.
Lemma~\ref{lem:Q} and \eqref{eq:V-delta} give
\[
 \nu\Delta=48k+Qd+d\sum_{i=1}^L\zeta_i^2.
\]
Now $\Delta/d=2r$ and $48k/d=6kr-24k^2$, yielding
\eqref{eq:second-sum}.
\end{proof}

For a positive integer
$M$, we write
\[
 v_2(M)=\max\{r\in\mathbb Z_{\geq0}:2^r\mid M\}.
\]
Thus $v_2(M)$ is the exponent of $2$ in the prime factorization of $M$.

\begin{lemma}\label{lem:root-bound}
Let $c>1$ be an even squarefree integer. Suppose roots of unity
$\zeta_1,\ldots,\zeta_L$ satisfy
\[
 \sum_{i=1}^L\zeta_i=A+B\sqrt c,\qquad
 \sum_{i=1}^L\zeta_i^2=A'+B'\sqrt c,
 \qquad A,B,A',B'\in\Z.
\]
Then
\begin{equation}\label{eq:joint-bound}
 L\geq\varphi(2c)(|B|+|B'|).
\end{equation}
\end{lemma}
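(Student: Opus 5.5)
The plan is to read off the $\sqrt c$-coefficient of a sum of roots of unity by a trace functional, show that a single root of unity contributes at most $1/\varphi(2c)$ to it, and use the $2$-adic order of each root to prevent any root from contributing to both $B$ and $B'$. Then $|B|+|B'|\leq L/\varphi(2c)$, which is \eqref{eq:joint-bound}.

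\emph{Setting up the functional.} Fix $M$ divisible by the orders of all $\zeta_i$ and by the conductor $f$ of $\Q(\sqrt c)$, and put $K=\Q(\zeta_M)$. For $x\in K$ set
\[
\beta(x)=\frac{1}{c\,[K:\Q]}\Tr_{K/\Q}\bigl(\sqrt c\,x\bigr).
\]
Then $\beta$ is $\Q$-linear, $\beta(1)=0$ and $\beta(\sqrt c)=1$. Hence $B=\sum_i\beta(\zeta_i)$ and $B'=\sum_i\beta(\zeta_i^2)$.

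\emph{The single-root computation.} For a root of unity $\zeta$ of order $m$, I will compute $\beta(\zeta)$ by first tracing down to $\Q(\zeta_m)$ (or to $\Q(\zeta_{\operatorname{lcm}(m,f)})$). Write $\sqrt c$ as a normalized Gauss sum of the quadratic character $\chi$ of conductor $f$. Orthogonality of characters should then show:
\begin{itemize}
\item $\beta(\zeta)=0$ unless $\sqrt c\in\Q(\zeta_m)$ and the $f$-part of $m$ is tightly constrained;
\item in the nonvanishing case, $\beta(\zeta)$ is $\pm$ a Möbius-type value divided by a totient, bounded in absolute value by $1/\varphi(2c)$.
\end{itemize}
For instance, with $\zeta$ of order $8$ and $c=2$ one gets $\zeta+\zeta^{-1}=\pm\sqrt2$. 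Since $\sqrt2=\zeta+\zeta^{-1}$, one has $\beta(\zeta)=\pm\tfrac12=\pm1/\varphi(4)$.

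\emph{The $2$-adic disjointness.} This is where $v_2$ enters. Since $c$ is even, $\chi$ is a character of conductor divisible by $8$, and $\chi$ depends on the residue modulo $8$. The nonvanishing of $\beta(\zeta)$ should force $v_2(\ord\zeta)$ to equal exactly $v_2(f)$: a smaller $2$-part cannot see $\chi$, and a larger one produces a Ramanujan-sum cancellation. Squaring lowers $v_2(\ord\zeta)$ by one whenever that valuation is positive. Therefore $\beta(\zeta)\ne0$ and $\beta(\zeta^2)\ne0$ cannot both hold.

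\emph{Conclusion.} Combining the two facts gives
\[
|B|+|B'|\leq\sum_i\bigl(|\beta(\zeta_i)|+|\beta(\zeta_i^2)|\bigr)\leq\frac{L}{\varphi(2c)}.
\]
The main obstacle is the exact evaluation of $\beta(\zeta)$. This needs a careful reduction from $K$ to the cyclotomic field generated by $\zeta$ and $\sqrt c$, together with a precise description of which orders $m$ give a nonzero value. The sharp constant $\varphi(2c)$ and the valuation condition both come out of that step. I would first verify it in the worked case $c=2$, then for $c=2p$, before writing the general Gauss-sum computation.
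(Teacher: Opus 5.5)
Your proposal is correct. It shares the paper's skeleton: both read off the $\sqrt c$-coordinate by tracing down to $\Q(\sqrt c)$. Both also use the fact that only roots with $v_2(\ord\zeta)=3$ can contribute. The paper kills $v_2(\ord\zeta)\ge4$ by the automorphism $\zeta\mapsto-\zeta$, which is exactly your cancellation, so no index contributes to both $B$ and $B'$.

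The difference lies in the quantitative step.
\begin{itemize}
\item The paper sorts indices into $J_3$ and $J_4$ and averages over $H=\Gal(E/\Q(\sqrt c))$. This produces elements $a+hB\sqrt c$ and $a'+hB'\sqrt c$ of $\Z[\sqrt c]$, written as sums of $h|J_3|$ and $h|J_4|$ roots of unity. It then invokes Larson's Theorem~6.2 as a black box.
\item You instead prove the pointwise bound $|\beta(\zeta)|\le 1/\varphi(2c)$ directly, and it does hold. If $\beta(\zeta)\ne0$, then $\ord\zeta=4cs$ with $s$ odd, squarefree and prime to $c$. An odd prime $p$ with $p^2\mid\ord\zeta$ forces vanishing by the same coset-sum trick using the $p$th roots of unity.
\item In the remaining case, $\Q(\zeta_{4c})$ and $\Q(\zeta_s)$ are linearly disjoint, and the even quadratic Gauss sum is $\pm2\sqrt c$. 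These give $\beta(\zeta)=\pm\mu(s)/(\varphi(2c)\varphi(s))$, using $[\Q(\zeta_{4c}):\Q(\sqrt c)]=\varphi(2c)$.
\end{itemize}

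What each route buys: yours is self-contained and gives the exact contribution of each root. It needs no integrality input, whereas the paper uses that the ring of integers of $\Q(\sqrt c)$ is $\Z[\sqrt c]$ in order to apply Larson. The paper's route is shorter because the Gauss-sum evaluation is outsourced to the citation. Your pointwise bound, summed over any family of roots adding up to $a+b\sqrt c$, recovers exactly the inequality $|b|\varphi(2c)\le(\text{number of roots})$ that the paper cites.
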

\begin{proof}
Let $K=\Q(\sqrt c)$ and choose a cyclotomic field $E$ containing
$K$ and all the roots in question. Write
$H=\Gal(E/K)$ and $h=|H|$. The conductor of $K$ is $4c$. Since $c$ is even and squarefree,
we can write $c=2c_0$ with $c_0$ odd; hence
$4c=8c_0$ and
\[
 v_2(4c)=v_2(8c_0)=3.
\]

For a root $\zeta$ of exact order $n$, consider
$\mathcal T(\zeta)=\sum_{\tau\in H}\tau(\zeta)\in K$.
If $K\not\subseteq\Q(\zeta)$, then
$K\cap\Q(\zeta)=\Q$. The restrictions of $H$ to $\Q(\zeta)$
run through its full Galois group with equal multiplicity, so
$\mathcal T(\zeta)$ is rational.
If $K\subseteq\Q(\zeta)$, then $4c\mid n$. When $v_2(n)\geq4$,
the automorphism
$\zeta\mapsto\zeta^{1+n/2}=-\zeta$ fixes $K$, since
$4c\mid n/2$. It extends to an element of $H$, and hence
$\mathcal T(\zeta)=0$.
Thus a nonzero $\sqrt c$-coefficient in $\mathcal T(\zeta)$
is possible only when $v_2(n)=3$.

Let $J_3=\{i:v_2(\ord\zeta_i)=3\}$ and
$J_4=\{i:v_2(\ord\zeta_i)=4\}$.
Since $\ord(\zeta_i^2)=\ord(\zeta_i)/\gcd(2,\ord\zeta_i)$,
the preceding observation gives
\[
 \sum_{i\in J_3}\mathcal T(\zeta_i)=a+hB\sqrt c,\qquad
 \sum_{i\in J_4}\mathcal T(\zeta_i^2)=a'+hB'\sqrt c
\]
for some integers $a,a'$. Indeed these expressions are
algebraic integers in $K$, and the ring of integers of $K$
is $\Z[\sqrt c]$.

By \cite[Theorem~6.2]{Larson}, writing an element
$a+b\sqrt c$ as a sum of roots of unity requires at least
$|b|\varphi(2c)$ roots. The two displayed expressions are sums
of $h|J_3|$ and $h|J_4|$ roots, respectively. Therefore
$|J_3|\geq|B|\varphi(2c)$ and
$|J_4|\geq|B'|\varphi(2c)$.
The two index sets are disjoint, and \eqref{eq:joint-bound}
follows.
\end{proof}

\begin{theorem}\label{thm:main}
If $\C=\C(\D,\kappa)$ is a near-Tambara--Yamagami fusion
category with $d_+\notin\Z$ and $\kappa>0$, then
$n=2$ and $\kappa=8$.
\end{theorem}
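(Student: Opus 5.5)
By Proposition~\ref{prop:rank-reduction} we already have $n=2$ and $\kappa=8k$ with $k\geq1$, so everything reduces to showing $k=1$. The plan is to feed the two root-of-unity sums of Proposition~\ref{prop:roots} into Lemma~\ref{lem:root-bound}. The resulting lower bound on $L$ grows faster in $k$ than the upper bound $L\leq 16+\tfrac{63}{2}k^2$ from \eqref{eq:L}, so only finitely many small $k$ survive, and these are then checked by hand.

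\emph{Setting up the quadratic field.} First I rewrite $r=\sqrt{16k^2+8}=2\sqrt{4k^2+2}$. Since $4k^2+2\equiv 2\pmod 4$, write $4k^2+2=m^2c$ with $c$ squarefree. Then $m$ is odd and $c=2c_0$ is even with $c_0$ odd, and $c>1$. Thus $r=2m\sqrt c$, and the right-hand sides of \eqref{eq:first-sum} and \eqref{eq:second-sum} have the form $A+B\sqrt c$ and $A'+B'\sqrt c$ with
\[
B=-2mk,\qquad B'=2m(2\nu-6k).
\]
Because $\nu=\pm1$ and $k\geq1$, we have $|B'|\geq 2m(6k-2)$. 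Lemma~\ref{lem:root-bound} then gives
\[
L\geq \varphi(2c)\,2m(7k-2).
\]

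\emph{Bounding the Euler factor.} Next I bound $\varphi(2c)m$ from below. We have $\varphi(2c)=\varphi(4c_0)=2\varphi(c_0)$. For odd $c_0$ the classical estimate $\varphi(c_0)\geq\sqrt{c_0}$ holds, since the only exceptions to $\varphi(n)\geq\sqrt n$ are $n=2,6$. Hence
\[
\varphi(2c)\,m\geq 2m\sqrt{c/2}=\sqrt2\,\sqrt{4k^2+2}>2\sqrt2\,k,
\]
and so $L>4\sqrt2\,k(7k-2)$. Comparing with $L\leq16+\tfrac{63}{2}k^2$ gives
\[
(28\sqrt2-31.5)k^2-8\sqrt2\,k-16<0.
\]
Numerically this is roughly $8.1k^2-11.3k-16<0$, which forces $k\leq2$.

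\emph{The case $k=2$.} This is the main obstacle, since the crude estimate is not sharp enough there. I will instead use the exact values. For $k=2$ we get $4k^2+2=18$, so $c=2$, $m=3$ and $\varphi(4)=2$. Then
\[
|B|+|B'|\geq 12+6\cdot 10=72,
\]
so $L\geq144$. On the other hand \eqref{eq:L} gives $L\leq 16+126=142$, a contradiction. Hence $k=1$, that is, $\kappa=8$, as claimed.

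If the bound for general $k$ turns out borderline somewhere, I would fall back on the exact inequality $L=16+56k^2-Q$ together with $Q\geq\tfrac{49}{2}k^2$, and compute $\varphi(2c)m$ exactly for the few small $k$ involved.
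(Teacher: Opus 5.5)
Your proposal is correct and follows the paper's proof: you apply Lemma~\ref{lem:root-bound} to the two sums of Proposition~\ref{prop:roots}, use a lower bound on the Euler factor to rule out $k\ge3$, and reach the contradiction $L\ge144>142$ at $k=2$. The only differences are cosmetic. You write $r=2m\sqrt c$ with $m$ odd, so your $m$ is half the paper's, and you cite $\varphi(c_0)\ge\sqrt{c_0}$ for odd $c_0$ where the paper uses the equivalent product estimate $\prod_{\ell}(\ell-1)/\sqrt\ell\ge1$.
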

\begin{proof}
By Proposition~\ref{prop:rank-reduction}, $\kappa=8k$ with
$k\in\Z_{\geq1}$ and $n=2$. Write
$r=m\sqrt c$, where $m\geq1$ is an integer and $c$ is squarefree.
Since $r^2=8(2k^2+1)$ has $2$-adic valuation $3$, $c$ is even.
Apply Lemma~\ref{lem:root-bound} to
\eqref{eq:first-sum}--\eqref{eq:second-sum}.
As $\nu\in\{\pm1\}$ and $k\geq1$,
\[
 L\geq m\varphi(2c)\bigl(k+|2\nu-6k|\bigr)
   =(7k-2\nu)m\varphi(2c)
   \geq(7k-2)m\varphi(2c).
\]
Together with \eqref{eq:L}, this gives
\begin{equation}\label{eq:final-bound}
 16+\frac{63}{2}k^2
 \geq(7k-2)\sqrt{16k^2+8}\,
       \frac{\varphi(2c)}{\sqrt c}.
\end{equation}
For even squarefree $c$,
\[
 \frac{\varphi(2c)}{\sqrt c}
 =\sqrt2\prod_{\substack{\ell\mid c\\\ell\ {\rm odd\ prime}}}
            \frac{\ell-1}{\sqrt\ell}
 \geq\sqrt2.
\]
Since $\sqrt{16k^2+8}>4k$ and $\sqrt2>7/5$, the right side
of \eqref{eq:final-bound} is greater than
$\frac{28}{5}k(7k-2)$. But
\[
 \frac{28}{5}k(7k-2)-\left(16+\frac{63}{2}k^2\right)
 =\frac{77k^2-112k-160}{10}>0
 \qquad(k\geq3).
\]
The last quadratic is positive at $3$ and increasing thereafter.
Thus $k\leq2$.

If $k=2$, then $r=6\sqrt2$, so $m=6$, $c=2$, and
$\varphi(2c)=2$. The lower bound gives $L\geq144$,
whereas \eqref{eq:L} gives $L\leq142$. This contradiction
excludes $k=2$, proving $k=1$ and hence $\kappa=8$.
\end{proof}

Together with the case $\kappa=0$, the numerical conclusion is
$d_+\notin\Z\Longrightarrow (n,\kappa)\in\{(2,0),(2,8)\}$.

\subsection{The zero-parameter case}

\begin{theorem}\label{thm:kappa-zero}
Let $\C=\C(\D,0)$ be a near-Tambara--Yamagami fusion category
over $\CC$, with $\D$ nonpointed. Then
$\D\simeq\Rep(H_8)$, where $H_8$ is the Kac--Paljutkin Hopf
algebra, and $\C$ is tensor equivalent to a category in the
eight-entry list of rank-one odd-component extensions in
\cite[Section~7, Tables~3--4, and Appendix~C.1]{CLS}.
\end{theorem}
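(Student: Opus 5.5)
By Proposition~\ref{prop:rank-reduction} we already know $n=2$, and Lemma~\ref{lem:order-bound} gives $G\cong C_2\times C_2$. The proof therefore splits into two steps. First identify $\D$. Then show that $\C$ is a $\Z/2$-graded extension of $\D$ whose odd component has rank one, so that the Choi--Lu--Sun classification applies.

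\textbf{Step 1: identifying $\D$.} With $\kappa=0$ the fusion rules become
$\rho\otimes\rho\cong\bigoplus_{g}g\oplus 2t$, and $\D\otimes\rho\subseteq\operatorname{add}(\rho)$. Hence
\[
\C=\D\oplus\operatorname{add}(\rho)
\]
is a faithful $\Z/2$-grading, with trivial component $\D$ and odd component $\{\rho\}$. Both components have dimension $8$, consistent with $\FPdim(\C)=16$.

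The Tambara--Yamagami categories with $G=C_2\times C_2$ are indexed by a nondegenerate symmetric bicharacter $\chi$ and a sign $\tau$. I would pin these down using the rank-one module category $\operatorname{add}(\rho)$. By the proof of Lemma~\ref{lem:order-bound}, the functor $\Phi$ is a fiber functor on $\D$, so $\D\simeq\Rep(H)$ for an $8$-dimensional semisimple Hopf algebra $H$ with four group-likes in its dual and a $2$-dimensional irreducible representation. Such an $H$ is one of $\CC[D_8]$, $\CC[Q_8]$, or $H_8$.

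To decide which, I would follow Tambara's classification of fiber functors on TY categories. A fiber functor exists exactly when certain Gauss-sum conditions on $(\chi,\tau)$ hold. The additional structure coming from $\rho$ should then single out the Kac--Paljutkin case. Concretely, $\rho\otimes\rho\cong\bigoplus g\oplus 2t$ means the internal End of the module $\operatorname{add}(\rho)$ is the regular algebra. The required condition is that $\D$ with this module category is the trivial component of a graded extension whose odd part is rank one. By the ENO classification of $\Z/2$-extensions, such extensions correspond to invertible $\D$-bimodule categories of rank one. A rank-one invertible bimodule is a tensor autoequivalence twisted by the fiber functor, and its existence, together with vanishing of the $O_3$ and $O_4$ obstructions, is what I expect to force $\D\simeq\Rep(H_8)$.

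\textbf{Step 2: matching to \cite{CLS}.} Once $\D\simeq\Rep(H_8)$ is established, the category $\C$ is, by construction, a rank-one odd-component $\Z/2$-extension of $\Rep(H_8)$. The cited tables list all such extensions up to tensor equivalence, giving eight solutions. One caveat is that we must work with the original category rather than a Galois conjugate. This causes no difficulty, because a Galois conjugate of such an extension is again such an extension and the list is closed under this operation.

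\textbf{Main obstacle.} The main obstacle is Step 1: excluding $\Rep(D_8)$ and $\Rep(Q_8)$ as the trivial component. I would try first to rely directly on \cite{CLS}, if their setup classifies rank-one odd extensions of all rank-five TY categories with $G=C_2\times C_2$ and finds solutions only for $\Rep(H_8)$. Failing that, I would compare Frobenius--Schur indicators. The indicator $\nu_2(t)$ and the values of $\chi(x,x)$, which enter through Shimizu's formula as in Proposition~\ref{prop:t}, should be constrained by the existence of the self-dual object $\rho$ of irrational dimension $2\sqrt2$ in the center computation. That constraint should pick out the bicharacter type realized by $H_8$.
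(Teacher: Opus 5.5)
\textbf{Step 1 has a gap.} Your reduction to $\D\simeq\Rep(H)$ with $H\in\{\CC[D_8],\CC[Q_8],H_8\}$ matches the paper: you use the faithful $\Z/2$-grading and the fiber functor $\Phi$. But the decisive step, ruling out $\CC[D_8]$ and $\CC[Q_8]$, is only announced (``should single out'', ``is what I expect to force''), and none of your tools does it.

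\begin{itemize}
\item All three of $\Rep(D_8)$, $\Rep(Q_8)$, $\Rep(H_8)$ admit fiber functors, so Tambara's Gauss-sum criterion only reproduces the list you already have.
\item The $O_3,O_4$ obstructions concern extending a homomorphism $\Z/2\to\operatorname{BrPic}(\D)$. They say nothing until you extract what the invertibility of $\operatorname{add}(\rho)$ forces. Your description of a rank-one invertible bimodule as ``a tensor autoequivalence twisted by the fiber functor'' is not a usable characterization, and you draw no consequence from it.
\item The indicator route is left entirely open.
\end{itemize}

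The missing idea is to exploit invertibility of $\mathcal M=\operatorname{add}(\rho)$ as a $\D$-bimodule category. This holds because $\mathcal M$ is a component of a faithful grading \cite[Theorem~6.1]{ENOH}. Invertibility makes the right action an equivalence $\D^{\mathrm{op}}\simeq\End_{\D}(\mathcal M)$ \cite[Proposition~4.2]{ENOH}. On the other hand, $\mathcal M\simeq\Vect$ via $\Phi$ identifies $\End_{\D}(\mathcal M)$ with $\Rep(H^{*\mathrm{cop}})$, whose simple dimensions are those of $\Rep(H^*)$. Since $\D^{\mathrm{op}}$ is nonpointed, $H^*$ is noncommutative, i.e.\ $H$ is not cocommutative. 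For $H=\CC[D_8]$ or $\CC[Q_8]$ this dual category would be pointed. Masuoka's classification then gives $H\cong H_8$ at once, with no indicator computation needed.

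\textbf{Step 2 omits an input.} The computation in \cite{CLS} is a \emph{unitary} pentagon calculation with the neutral component fixed. Before invoking it, you must know that $\C$ is unitarizable and that the identification of $\D$ with $\Rep(H_8)$ can be chosen unitary. The paper gets this from $\C_{\mathrm{ad}}=\D$, $\D_{\mathrm{ad}}=\D_{\mathrm{pt}}$, $(\D_{\mathrm{pt}})_{\mathrm{ad}}=\Vect$. Thus $\C$ is nilpotent, hence weakly group-theoretical, hence completely unitary by \cite[Theorem~5.20]{GHR}.

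Your Galois caveat, by contrast, is beside the point. The grading and fiber-functor arguments already take place in $\C$ itself, and $\C$ is pseudo-unitary because $\FPdim(\C)=16$ is an integer. Your claim that the list is closed under Galois conjugation is also unsupported.

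Finally, that list is modulo changes of basis in the fusion spaces with the neutral category fixed. So the conclusion is membership in the eight-entry list, not that the list enumerates extensions up to tensor equivalence.
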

\begin{proof}
Since $d=n\sqrt2$ is irrational, Proposition~\ref{prop:rank-reduction}
gives $n=2$ and hence $\FPdim(\D)=8$. Put
$\mathcal M=\operatorname{add}(\rho)$. The fusion rules give
\[
 \C=\D\oplus\mathcal M,\qquad
 \D\otimes\mathcal M,\ \mathcal M\otimes\D\subseteq\mathcal M,
 \qquad \mathcal M\otimes\mathcal M\subseteq\D.
\]
Thus $\C$ is faithfully graded by $\mathbb Z/2\mathbb Z$ with
neutral component $\D$ and a rank-one odd component. As in the proof
of Lemma~\ref{lem:order-bound}, the action on $\mathcal M$ gives the
fiber functor
\[
 \Phi:\D\longrightarrow\End_{\CC}(\mathcal M)\simeq\Vect,
 \qquad \Phi(X)=\Hom_{\C}(\rho,X\otimes\rho).
\]
Here $\End_{\CC}(\mathcal M)$ denotes ordinary complex-linear
endofunctors, with composition as tensor product. Reconstructing from
this actual fiber functor gives a semisimple Hopf algebra $H$ such that
$\D\simeq\Rep(H)$ and $\dim H=8$.

The odd component $\mathcal M$ is an invertible $\D$-bimodule
category. Consequently its right action induces an equivalence
\[
 \D^{\mathrm{op}}\simeq\End_{\D}(\mathcal M),
\]
where $\mathrm{op}$ reverses the tensor product; see
\cite[Theorem~6.1 and Proposition~4.2]{ENOH}.
The fiber-functor duality of \cite[Example~7.12.26]{EGNO}, with the
correction recorded in \cite{EGNOcorr}, identifies this module
endofunctor category with $\Rep(H^{*\mathrm{cop}})$ in the
composition convention. In particular it has the same simple
dimensions as $\Rep(H^*)$. Since $\D$ is nonpointed, both $H$ and
$H^*$ are noncommutative as algebras. Thus $H$ is neither commutative
nor cocommutative. Masuoka's dimension-eight classification
\cite{Masuoka} now gives $H\cong H_8$, so $\D\simeq\Rep(H_8)$.

To apply the unitary calculation in \cite{CLS}, observe directly from
the fusion rules that
\[
 \C_{\mathrm{ad}}=\D,\qquad
 \D_{\mathrm{ad}}=\D_{\mathrm{pt}},\qquad
 (\D_{\mathrm{pt}})_{\mathrm{ad}}=\Vect.
\]
Thus $\C$ is nilpotent, hence weakly group-theoretical
\cite{ENO}. By \cite[Theorem~5.20]{GHR}, both $\C$ and $\D$ are completely
unitary. In particular, $\C$ is unitarizable, and the identification
of its neutral category with $\Rep(H_8)$ can be chosen unitary;
see \cite[Definition~5.13(1)]{GHR}.
After choosing the identification of the neutral component with
$\Rep(H_8)$ and the labels of its simples, its associator is therefore
covered by the unitary pentagon calculation of
\cite[Section~7, Tables~3--4, and Appendix~C.1]{CLS}. That calculation
lists eight solutions modulo changes of bases in the fusion spaces,
with the neutral category fixed. Forgetting these choices gives the
claimed membership in the eight-entry list. We do not need to decide
here which entries remain distinct under tensor equivalences that
relabel the neutral component.
\end{proof}

\subsection{Nonexistence for parameter eight}\label{r8:sec:induction}

From now on suppose, for contradiction, that $\C=\C(\D,8)$ exists. The subcategory $\D$ generated by $G$ and $t$ is integral of dimension $8$. By \cite[Lemma~4.1]{DS}, a Galois
conjugate of $\cC$ is pseudo-unitary and has the same fusion rules.
Replace $\cC$ by this conjugate and equip it with its canonical spherical
structure. Every dimension below is therefore positive and equals
Frobenius--Perron dimension. Put
\begin{equation}\label{r8:eq:arithmetic}
s=\sqrt6,\qquad d=4+2s,\qquad
D=48+16s=8(d+2),\qquad \varepsilon=1+d.
\end{equation}
The identities
\begin{equation}\label{r8:eq:arithmetic-identities}
d^2=8+8d,\qquad \frac Dd=4s,\qquad
D'=48-16s,\qquad \frac D{D'}=\varepsilon
\end{equation}
will be used repeatedly. The matrix \eqref{eq:gram} now is
\begin{equation}\label{r8:eq:M}
M=\begin{pmatrix}
6&2&2&2&2&8\\
2&6&2&2&2&8\\
2&2&6&2&2&8\\
2&2&2&6&2&8\\
2&2&2&2&12&16\\
8&8&8&8&16&80
\end{pmatrix}.
\end{equation}

Here $D=\Delta=\dim(\C)$. We use the unnormalized matrix $\Ss$,
the normalized matrix $S=\Ss/D$, and the character identity
\eqref{eq:central-character} fixed in Section~\ref{sec:preliminaries}.

The Ng--Schauenburg indicator formula \cite[Theorem~4.1]{NS} is
\begin{equation}\label{r8:eq:indicator}
\sum_{Z\in\Irr(\cZ)}m_X(Z)\dim(Z)\theta_Z^n=D\nu_n(X).
\end{equation}
For a nontrivial simple $X$, $\nu_1(X)=0$. All simples of $\cC$ are
self-dual, so $\nu_2(X)\in\{1,-1\}$. The fourth indicator is the trace
of an operator of order four on $\Hom(\one,X^{\otimes4})$, hence
\begin{equation}\label{r8:eq:fourth-cyclotomic}
\nu_4(X)\in\ZZ[\ii].
\end{equation}
The balancing identity is \eqref{eq:balancing-general}.

\paragraph{Characters and the pointed subcategory}

The characters of $R_8$ are the two dimension characters $\chi_\pm$,
the character $\eta$, and the three nontrivial characters $\lambda$ of
$G$, extended as follows:
\begin{equation}\label{r8:eq:characters}
\begin{array}{c|ccc|c}
 &x\in G&t&\rho& f_\chi\\ \hline
\chi_+&1&2&4+2s&D\\
\chi_-&1&2&4-2s&D'\\
\eta&1&-2&0&8\\
\lambda&\lambda(x)&0&0&4.
\end{array}
\end{equation}
The table follows directly from \eqref{eq:fusion}. Its six distinct rows
exhaust the characters of the complexified rank-six ring.

\begin{lemma}\label{r8:lem:pointed}
The pointed subcategory of $\cC$ is tensor equivalent to $\Vect_G$.
For every $x\in G$, one has $\nu_2(x)=1$. There is at most one central
invertible object with prescribed restriction $x\in G$.
\end{lemma}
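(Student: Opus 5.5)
The plan is to prove the three assertions in the order stated, using the facts already available about the Tambara--Yamagami subcategory $\D$ and about the Drinfeld center from Lemma~\ref{lem:gram}, Lemma~\ref{lem:unit}, and Proposition~\ref{prop:invertibles}.

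\emph{Step 1: the pointed part is $\Vect_G$.} The pointed subcategory of $\cC$ is the pointed subcategory of $\D$, because $\rho$ is not invertible. By \cite{TY}, $\D$ is determined by a nondegenerate symmetric bicharacter $\chi$ on $G$, a sign $\tau$, and the associator on the invertible objects. That associator is trivial, so $\cC_{\mathrm{pt}}\simeq\Vect_G$ with trivial $3$-cocycle. This observation is already invoked in the proof of Proposition~\ref{prop:invertibles}.

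\emph{Step 2: $\nu_2(x)=1$.} For $x\in G$, the second indicator of $x$ in $\Vect_G^{\omega}$ equals $\omega(x,x,x)$ when $x^2=\one$. With $\omega$ trivial and $G$ of exponent $2$, this gives $\nu_2(x)=1$. The indicator is computed in the spherical structure restricted from $\cC$. On the pointed part this restriction is the positive one, since dimensions there are $1$, so the computation is valid. As a consistency check, Proposition~\ref{prop:invertibles} already used $\Tr(\theta^2_{I(f)})=D$.

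\emph{Step 3: uniqueness of a central invertible with restriction $x$.} Let $Z$ and $Z'$ be invertible objects of $\ZC$ with $F(Z)\cong F(Z')\cong x$. Then $Z'\otimes Z^{*}$ is invertible and central, and it restricts to $\one$. So it suffices to show that the only central invertible $Y$ with $F(Y)\cong\one$ is $Y=\one$. Such a $Y$ is a summand of $I(\one)$ by adjunction. By the $\kappa=8$ instance of Lemma~\ref{lem:unit} (take $k=1$), the summands of $I(\one)$ are $\one,A,B,C_0,C_g,C_h$. Only $\one$ has dimension $1$; the others have dimensions $1+d$, $q$ and $2q$, all larger than $1$. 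Hence $Y\cong\one$ and $Z'\cong Z$.

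An alternative for Step 3 avoids Lemma~\ref{lem:unit}. Central structures on $\one$ are the invertible objects of $\ZC$ lying over $\one$, and these correspond to tensor automorphisms of $\id_{\cC}$, that is, to characters of the universal grading group. The universal grading group of $\cC$ is trivial, because $\cC_{\mathrm{ad}}=\cC$: indeed $\rho\otimes\rho$ contains every simple object when $\kappa=8$. Either argument suffices.

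\emph{Main obstacle.} The only delicate point is Step 2. One must check that the spherical structure on the Galois conjugate restricts on $\Vect_G$ to the standard one. It is possible that the pivotal structure twists by a character of $G$. Even so, on objects of order $2$ with trivial cocycle, the indicator is forced to be $1$ once dimensions are positive.
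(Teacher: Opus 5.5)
Your proof is correct. It reaches the three assertions partly by a different route from the paper.

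\textbf{Pointed part.} The paper does not invoke the Tambara--Yamagami classification here. It observes that $\operatorname{add}(\rho)$ is a rank-one module category over $\cC_{\mathrm{pt}}$, because $x\otimes\rho\cong\rho$ for all $x\in G$. The module associativity constraint then exhibits the $3$-cocycle of the pointed part as a coboundary. Your appeal to \cite{TY} is equally valid: the normalized associator of a Tambara--Yamagami category is trivial on $G\times G\times G$, and the Galois-conjugate $\D$ still has Tambara--Yamagami fusion rules, so it falls under that classification. The paper's argument is more self-contained. It uses only the existence of a simple object that absorbs $G$, so it would work without knowing the structure of $\D$ at all.

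\textbf{Indicator.} This step is essentially the same in both proofs. Positivity of dimensions forces the pivotal character on $G$, which takes values $\pm1$, to be trivial. Hence $\nu_2(x)=1$ for the trivial cocycle.

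\textbf{Uniqueness of central lifts.} Your primary argument uses the formal-codegree dimensions from Lemma~\ref{lem:codegrees} (or Lemma~\ref{lem:unit}). A central invertible over $\one$ is a dimension-one summand of the multiplicity-free object $I(\one)$, and $\one$ is the only such summand. The paper instead gives exactly your alternative argument. Central structures on $\one$ are tensor automorphisms of $\operatorname{Id}_{\cC}$, and these are trivial because $\rho\otimes\rho$ contains $\rho$ and every other simple.

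The paper's version is purely fusion-theoretic and needs no dimension data. Yours relies on pseudo-unitarity and the codegree computation. Both arguments correctly break down exactly when $\kappa=0$: there the summand $A$ has dimension $1$, and the universal grading group is $\Z/2\Z$.
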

\begin{proof}
The additive subcategory generated by $\rho$ is a rank-one left module
category over the pointed subcategory, since $x\otimes\rho\cong \rho$ for all $x\in G$.
Choosing isomorphisms $x\otimes\rho\cong\rho$, its module associativity says that
the pointed associator is a coboundary. Thus the pointed subcategory is
$\Vect_G$. The positive spherical structure restricts to the canonical
one, and every element of $G$ has second indicator $1$.

A tensor automorphism of $\operatorname{Id}_{\cC}$ acts by a scalar $a_X$
on each simple $X$, with $a_Xa_Y=a_Z$ whenever $Z$ occurs in $X\otimes Y$.
The product $\rho\otimes \rho$ contains $\rho$, so $a_\rho^2=a_\rho$ and
$a_\rho=1$. It also contains every other simple, so all $a_X=1$.
A central structure on the unit is the same as such a tensor
automorphism. If two central invertibles had the same restriction,
their quotient would give a nontrivial central structure on the unit.
This is impossible.
\end{proof}

\begin{lemma}\label{r8:lem:signs}
For $x\in G\setminus\{1\}$, every simple summand of $I(x)$ has twist
$1$ or $-1$. Counting induction multiplicities, each of these two twist
sectors has dimension $D/2$.
\end{lemma}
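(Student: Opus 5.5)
The proof will use the Ng--Schauenburg indicator formula \eqref{r8:eq:indicator} with $n=2$, applied to $X=x\in G\setminus\{1\}$. By Lemma~\ref{r8:lem:pointed}, $\nu_2(x)=1$. Hence
\[
\sum_{Z}m_x(Z)\dim(Z)\,\theta_Z^2=D .
\]

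First I compute the total dimension of the induced object. By \eqref{dimI(X)}, we have $\dim I(x)=\sum_Z m_x(Z)\dim Z=\FPdim(x)D=D$. All dimensions are positive in the pseudo-unitary conjugate, and every twist satisfies $|\theta_Z^2|=1$. So the indicator identity says that a convex combination, with weights $m_x(Z)\dim Z/D$, of unit complex numbers equals $1$. This forces $\theta_Z^2=1$ for every summand $Z$ with $m_x(Z)>0$. Thus every twist in $I(x)$ is $\pm1$.

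Next I use the case $n=1$ of \eqref{r8:eq:indicator}. Because $x\neq\one$ is simple, $\nu_1(x)=0$, and so
\[
\sum_Z m_x(Z)\dim(Z)\,\theta_Z=0 .
\]
Write $D_+$ and $D_-$ for the multiplicity-weighted dimensions of the summands with twist $1$ and $-1$. The two equations then read $D_++D_-=D$ and $D_+-D_-=0$. Hence $D_+=D_-=D/2$, which is the claimed statement.

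The only point needing care is justifying that the twists of simples in $\ZC$ have modulus one and that positivity holds. Modulus one follows from the twists being roots of unity, as recalled in Section~\ref{sec:preliminaries}. Positivity follows from the pseudo-unitary conjugate chosen at the start of Section~\ref{r8:sec:induction}. I should also check that $\nu_2(x)=1$ is taken with respect to the positive spherical structure restricted to the pointed part; this is exactly what Lemma~\ref{r8:lem:pointed} supplies. This is essentially the argument already used in Proposition~\ref{prop:invertibles} for general $k$, so I expect no serious obstacle.
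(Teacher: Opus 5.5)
Your proposal is correct and follows essentially the same argument as the paper. The paper likewise uses $\nu_2(x)=1$ from Lemma~\ref{r8:lem:pointed} and $\dim I(x)=D$ with equality in the triangle inequality to force $\theta_Z^2=1$, then uses $\nu_1(x)=0$ to show that the two twist sectors have equal dimension $D/2$.
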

\begin{proof}
By Lemma~\ref{r8:lem:pointed} and \eqref{r8:eq:indicator},
\[
\sum_Zm_x(Z)\dim Z\,\theta_Z^2=D
=\sum_Zm_x(Z)\dim Z.
\]
Every summand has a positive dimension and $|\theta_Z|=1$. Equality in
the triangle inequality therefore forces $\theta_Z^2=1$ whenever
$m_x(Z)>0$. The first indicator of $x$ vanishes, so the two twist
sectors have equal dimension.
\end{proof}

Call a simple central object \emph{low} if its restriction contains
$\one$, or equivalently if it occurs in $I(\one)$. This terminology will
only be used in this section.

By \eqref{r8:eq:characters} and \eqref{eq:central-character},
\begin{equation}\label{r8:eq:Ione}
I(\one)\cong \one\oplus A\oplus B\oplus\bigoplus_{\lambda\ne1}C_\lambda,
\end{equation}
where $A,B,C_\lambda$ correspond to $\chi_-,\eta,\lambda$, respectively.
Their dimensions are
\begin{equation}\label{r8:eq:low-dimensions}
\dim A=1+d,\qquad \dim B=d+2,\qquad
\dim C_\lambda=2(d+2).
\end{equation}
All of them have twist $1$.

The coefficient of $d$ in a restriction dimension is its
$\rho$-multiplicity. Since each low object contains $\one$ once,
irrationality gives
\begin{equation}\label{r8:eq:AB}
F(A)\cong \one\oplus \rho,\qquad F(B)\cong \one\oplus g\oplus \rho
\end{equation}
for some nontrivial $g\in G$. Fix this $g$ and label the other elements
$h,k=gh$. These are the labels $f,g,h$, respectively, from
Section~\ref{sec:bound}; this local relabeling is used throughout the
$R_8$ obstruction. A restriction $FC_\lambda$ contains $\rho$ twice; its integral
part has dimension $4$. Set $b_\lambda=\ip{FC_\lambda}{t}$.
Symmetry of $\Ss$, applied to $B,C_\lambda$, gives
\[
2(d+2)(1+\lambda(g))=(d+2)(4-4b_\lambda).
\]
Thus
\begin{equation}\label{r8:eq:b-lambda}
b_\lambda=\frac{1-\lambda(g)}2.
\end{equation}
Precisely one of the three objects, denoted $C_0$, has $b_\lambda=0$;
the other two have $b_\lambda=1$.

By the first row of \eqref{r8:eq:M}, the low contributions to $I(g),I(h)$,
and $I(k)$ have dimensions
\begin{equation}\label{r8:eq:low-masses}
3(d+2),\qquad4(d+2),\qquad4(d+2),
\end{equation}
respectively. For $g$, the contribution is $B$ together with one
$C_\lambda$, counted with multiplicity; for $h$ and $k$ it consists of
two $C_\lambda$'s, counted with multiplicity. In particular,
Lemma~\ref{r8:lem:signs} implies that every nonlow central simple whose
restriction contains $h$ or $k$ has twist $-1$.

\begin{lemma}\label{r8:lem:first-ten}
There is a central invertible $P$ with $F(P)\cong g$, $P\otimes P\cong \one$, and
$\theta_P=1$. After naming the two remaining low objects $C_h,C_k$, their
restrictions are
\begin{align}
F(C_0)&\cong \one\oplus g\oplus h\oplus k\oplus 2 \rho,\label{r8:eq:Czero}\\
F(C_h)&\cong \one\oplus h\oplus t\oplus 2 \rho,\qquad
F(C_k)\cong \one\oplus k\oplus t\oplus 2 \rho.\label{r8:eq:Chk}
\end{align}
Moreover, $P\otimes B\cong B$, $P\otimes C_0\cong C_0$, and the six summands of $I(g)$ are
\[
B,\ C_0,\ P,\ P\otimes A,\ P\otimes C_h,\ P\otimes C_k,
\]
each with multiplicity one. The first four have twist $1$ and the last
two have twist $-1$.
\end{lemma}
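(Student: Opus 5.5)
The plan is to specialize the general analysis of Section~\ref{sec:bound} to $k=1$ and then use Lemmas~\ref{r8:lem:pointed} and~\ref{r8:lem:signs} for the remaining assertions. Throughout, the labels $f,g,h$ of Section~\ref{sec:bound} are renamed $g,h,k$, as announced above.

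\emph{The low objects and $P$.} Since $\kappa=8$ gives $k=1$, Lemma~\ref{lem:unit} applies directly. Under the renaming it gives $F(C_0)\cong\one\oplus g\oplus h\oplus k\oplus 2\rho$. It also identifies the two remaining summands of dimension $2(d+2)$ as objects with restrictions $\one\oplus h\oplus t\oplus2\rho$ and $\one\oplus k\oplus t\oplus2\rho$; we call them $C_h,C_k$. This matches \eqref{r8:eq:b-lambda}. Next, Proposition~\ref{prop:invertibles} provides two positive-twist simples $P_1,P_2$ with $F(P_i)\cong g\oplus b_i\rho$ and $b_1+b_2=1$. Let $P$ be the one with $b_i=0$. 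Then $\dim P=1$, so $P$ is invertible, $F(P)\cong g$, and $\theta_P=1$. The object $P\otimes P$ is a central invertible restricting to $g\otimes g\cong\one$. The uniqueness part of Lemma~\ref{r8:lem:pointed} therefore gives $P\otimes P\cong\one$.

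\emph{Stabilization of $B$ and $C_0$.} Tensoring with the invertible $P$ preserves simplicity and dimension, and the restriction is $g\otimes F(-)$. We have $g\otimes F(B)\cong g\oplus\one\oplus\rho$. This contains $\one$, so $P\otimes B$ is low. Its dimension $d+2$ occurs only once among the low objects by \eqref{r8:eq:low-dimensions}, hence $P\otimes B\cong B$. Likewise $g\otimes F(C_0)\cong F(C_0)$ contains $\one$ but not $t$. Among the three low objects of dimension $2(d+2)$, only $C_0$ has no $t$, so $P\otimes C_0\cong C_0$.

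\emph{The summands of $I(g)$.} By adjunction, $[I(g),Z]=[F(Z),g]$. The restrictions of the six listed objects are
\[
F(B),\quad F(C_0),\quad g,\quad g\oplus\rho,\quad g\oplus k\oplus t\oplus2\rho,\quad g\oplus h\oplus t\oplus2\rho .
\]
Each contains $g$ exactly once, so each occurs in $I(g)$ with multiplicity one. Here we use $g\otimes h=k$ and $g\otimes k=h$. These six objects are pairwise nonisomorphic, since their restrictions differ. Their dimensions sum to
\[
(d+2)+2(d+2)+1+(1+d)+2\cdot2(d+2)=16+8d=D=\dim I(g).
\]
Hence they exhaust $I(g)$. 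This dimension bookkeeping is the step that closes the list, and I expect it to be the main point requiring care.

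\emph{Twists.} The objects $B$, $C_0$ and $P$ have twist $1$: the first two lie in $I(\one)$, and $\theta_P=1$ was obtained above. The objects $P\otimes C_h$ and $P\otimes C_k$ are nonlow, since their restrictions do not contain $\one$, and their restrictions contain $k$ and $h$ respectively. By the remark following \eqref{r8:eq:low-masses}, both have twist $-1$. Lemma~\ref{r8:lem:signs} says that the twist-$1$ sector of $I(g)$ has dimension $D/2=8+4d$. The objects $B$, $C_0$ and $P$ account for only $3d+7$ of this. The remaining $1+d$ must therefore come from $P\otimes A$, which must have twist $1$, while the twist-$(-1)$ sector is $P\otimes C_h\oplus P\otimes C_k$, of dimension $4(d+2)=D/2$, as required.
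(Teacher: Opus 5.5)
Your proof is correct, but it obtains the key inputs differently from the paper.

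\textbf{Your route.} You import the general-$k$ results of Section~\ref{sec:bound}:
\begin{itemize}
\item Lemma~\ref{lem:unit} at $k=1$ gives the restrictions of $C_0,C_h,C_k$ outright.
\item Proposition~\ref{prop:invertibles} with $b_1+b_2=1$ gives $P$ together with $\theta_P=1$.
\end{itemize}
You then close $I(g)$ by a dimension count. You get the twists $-1$ of $P\otimes C_h,P\otimes C_k$ from the remark after \eqref{r8:eq:low-masses}. Finally, you get $\theta_{P\otimes A}=1$ from the twist-sector bookkeeping of Lemma~\ref{r8:lem:signs}.

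\textbf{The paper's route.} The paper rederives everything inside the $R_8$ section:
\begin{itemize}
\item It finds $P$ and $P\otimes A$ directly as the two constituents of the nonlow twist-$1$ part of $I(g)$. This part has dimension $d+2$, and the constituents come from the equations $\sum rc=1$ and $\sum(r^2+2rb)=2$.
\item It proves $\theta_{P\otimes C_\lambda}=\lambda(g)$ from the central-character identity and balancing.
\item It combines this with the $S$-matrix formula $b_\lambda=(1-\lambda(g))/2$ to show $P\otimes C_0\cong C_0$. Then $g$-invariance pins down $F(C_0)$.
\item The first row of $M$ gives $F(C_h)$ and $F(C_k)$, and $M_{g,g}=6$ closes $I(g)$.
\end{itemize}

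\textbf{Comparison.} Your argument is shorter and legitimately reuses proven results; the paper itself records the label correspondence $f,g,h\mapsto g,h,k$. The paper's argument keeps the $R_8$ obstruction independent of two things: the Gram-matrix case analysis in Lemma~\ref{lem:unit} (the determinant and column-support arguments), and the indicator analysis of Proposition~\ref{prop:invertibles}. It also yields the formula $\theta_{P\otimes C_\lambda}=\lambda(g)$, which ties twists directly to characters.
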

\begin{proof}
By \eqref{r8:eq:low-masses} and Lemma~\ref{r8:lem:signs}, the nonlow twist-$1$
part of $I(g)$ has dimension $d+2$. Its simple constituents cannot contain
$h$ or $k$ in their restrictions. Write their restrictions as
$r g\oplus b t\oplus c \rho$, with $r\ge1$, $b,c\ge0$. Since each occurs in $I(g)$
with multiplicity $r$, comparison of the coefficients of $1,d$ gives
\begin{equation}\label{r8:eq:plus-equations}
\sum rc=1,\qquad \sum(r^2+2rb)=2.
\end{equation}
The first equation gives exactly one constituent with $r=c=1$; its
$b$ is zero by the second equation. The remaining contribution to the
second equation is $1$, so there is exactly one further constituent, with
$(r,b,c)=(1,0,0)$. Denote the latter by $P$. It has dimension one,
$FP\cong g$, and twist $1$. Lemma~\ref{r8:lem:pointed} implies $P\otimes P\cong \one$.
Let $Z$ be the former constituent, with $F(Z)\cong g\oplus\rho$.
The simple object $P\otimes Z$ forgets to $\one\oplus\rho$, so by
adjunction it occurs in $I(\one)$. The unique summand of $I(\one)$ of
dimension $1+d$ is $A$. Hence $P\otimes Z\cong A$, and $P^2\cong\one$
gives $Z\cong P\otimes A$.

The restriction of $P\otimes B$ equals $FB$ and contains $\one$. Among the low
objects only $B$ has dimension $d+2$, hence $P\otimes B\cong B$. Equations
\eqref{eq:central-character} and \eqref{eq:balancing-general} give
\begin{equation}\label{r8:eq:Ptwist}
\theta_{P\otimes C_\lambda}=\lambda(g).
\end{equation}
If $F(C_0)$ did not contain $g$, then $P\otimes C_0$ would be nonlow with twist
$1$. Its restriction would contain $h$ or $k$, because the integral part
of $F(C_0)$ has dimension $4$, contains $\one$ once, and has no $t$.
This contradicts the observation after \eqref{r8:eq:low-masses}. Thus
$P\otimes C_0$ is low. It has no $t$ in its restriction, so $P\otimes C_0\cong C_0$ by
\eqref{r8:eq:b-lambda}. Invariance under $g$ now forces
$\ip{F(C_0)}{g}=1$ and equal coefficients on $h,k$; the integral dimension
is $4$, proving \eqref{r8:eq:Czero}.

The first row of \eqref{r8:eq:M} now forces the two remaining restrictions
to be those in \eqref{r8:eq:Chk}. Their $P$-translates have restrictions
\begin{equation}\label{r8:eq:PChk}
F(P\otimes C_h)\cong g\oplus k\oplus t\oplus 2 \rho,\qquad
F(P\otimes C_k)\cong g\oplus h\oplus t\oplus 2 \rho
\end{equation}
and twist $-1$ by \eqref{r8:eq:Ptwist}. These, together with $B,C_0,P,P\otimes A$,
are six distinct simples containing $g$ once. Since $M_{g,g}=6$, they
exhaust $I(g)$.
\end{proof}

\begin{table}[tb]
\centering
\caption{All central simples whose restrictions meet $G$. Here
$\varepsilon=1+d$.}
\label{r8:tab:fifteen}
\renewcommand{\arraystretch}{1.12}
\begin{tabular}{@{}llll@{}}
\toprule
$Z$&$FZ$&$\dim Z$&$\theta_Z$\\
\midrule
$\one$&$\one$&$1$&$1$\\
$P$&$g$&$1$&$1$\\
$Q$&$h$&$1$&$-1$\\
$P\otimes Q$&$k$&$1$&$-1$\\
$A$&$\one\oplus \rho$&$\varepsilon$&$1$\\
$P\otimes A$&$g\oplus \rho$&$\varepsilon$&$1$\\
$Q\otimes A$&$h\oplus \rho$&$\varepsilon$&$-1$\\
$P\otimes Q\otimes A$&$k\oplus \rho$&$\varepsilon$&$-1$\\
$B$&$\one\oplus g\oplus \rho$&$d+2$&$1$\\
$Q\otimes B$&$h\oplus k\oplus \rho$&$d+2$&$-1$\\
$C_0$&$\one\oplus g\oplus h\oplus k\oplus 2 \rho$&$2(d+2)$&$1$\\
$C_h$&$\one\oplus h\oplus t\oplus 2 \rho$&$2(d+2)$&$1$\\
$C_k$&$\one\oplus k\oplus t\oplus 2 \rho$&$2(d+2)$&$1$\\
$P\otimes C_h$&$g\oplus k\oplus t\oplus 2 \rho$&$2(d+2)$&$-1$\\
$P\otimes C_k$&$g\oplus h\oplus t\oplus 2 \rho$&$2(d+2)$&$-1$\\
\bottomrule
\end{tabular}
\end{table}

\begin{proposition}\label{r8:prop:fifteen}
There is a central invertible object $Q$ with $FQ\cong h$. The fifteen simple objects
in Table~\ref{r8:tab:fifteen} are exactly the central simples whose
restrictions contain an invertible object of $\cC$. The group of central
invertibles is $\{\one,P,Q,P\otimes Q\}\cong C_2^2$.
\end{proposition}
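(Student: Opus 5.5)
The plan is to treat $I(h)$ the way Lemma~\ref{r8:lem:first-ten} treated $I(g)$: list the part already known, and pin down the residual part with the Gram matrix \eqref{r8:eq:M}, the twist-sector dimensions of Lemma~\ref{r8:lem:signs}, and the Galois reflection of Lemma~\ref{lem:galois} (here with $k=1$).

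\emph{Known summands of $I(h)$.} From \eqref{r8:eq:Czero}--\eqref{r8:eq:PChk}, the summands of $I(h)$ already found are $C_0$ and $C_h$ (twist $1$) and $P\otimes C_k$ (twist $-1$). By \eqref{r8:eq:low-masses}, the low part of $I(h)$ has dimension $4(d+2)=D/2$, which is the full twist-$1$ sector. So every remaining summand is nonlow with twist $-1$. Their total dimension, counted with multiplicity, is
\[
D/2-2(d+2)=2(d+2).
\]
Every simple whose restriction contains $g$ already occurs in $I(g)$, and Lemma~\ref{r8:lem:first-ten} lists all of these. Hence a residual constituent $Z$ has a restriction of the form $r\,h\oplus r'k\oplus b\,t\oplus c\,\rho$ with $r\ge1$.

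\emph{Residual Gram entries.} Subtracting the three known rows from \eqref{r8:eq:M} gives:
\begin{itemize}
\item residual $(h,h)$ entry $3$, so $\sum r^2=3$;
\item residual $(h,k)$ entry $1$, so $\sum rr'=1$;
\item residual $(h,t)$ entry $0$, so every $b=0$;
\item residual $(h,\rho)$ entry $2$, so $\sum rc=2$.
\end{itemize}
Therefore there are exactly three residual constituents, each with $r=1$, and exactly one of them has $r'=1$. Their restrictions are
\[
h\oplus c_1\rho,\qquad h\oplus c_2\rho,\qquad h\oplus k\oplus c_3\rho,
\qquad c_1+c_2+c_3=2.
\]
This is consistent with the dimension count, since these dimensions sum to $4+2d$.

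\emph{Fixing the $\rho$-multiplicities.} All three constituents lie in $\U$ and have twist $-1$. By Lemma~\ref{lem:galois} with $k=1$, the reflection $\pi$ preserves twists, restrictions to $\D$, and the set of residual summands of $I(h)$, and it acts by $b\mapsto a-b$.
\begin{itemize}
\item The object with $a=2$ is sent to an object of the same type, so $c_3=2-c_3$, giving $c_3=1$.
\item Then $c_1+c_2=1$, and the objects with $a=1$ are $\{h,\ h\oplus\rho\}$; these are exchanged by $\pi$.
\end{itemize}
The constituent with restriction $h$ has dimension $1$, so it is a central invertible $Q$ with $FQ\cong h$ and $\theta_Q=-1$. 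Lemma~\ref{r8:lem:pointed} gives $Q\otimes Q\cong\one$, because the restriction of $Q\otimes Q$ is $\one$. Uniqueness of central invertibles with a given restriction identifies the other two constituents as $Q\otimes A$ and $Q\otimes B$: their restrictions are $h\oplus\rho$ and $h\oplus k\oplus\rho$, which match those of $Q\otimes A$ and $Q\otimes B$. The same uniqueness argument shows that $P\otimes Q$ and $P\otimes Q\otimes A$ realize the restrictions $k$ and $k\oplus\rho$. Their twists are $-1$ by the remark after \eqref{r8:eq:low-masses}: they are nonlow and their restrictions contain $h$ or $k$.

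\emph{Exhaustion and the group of invertibles.} Check from Table~\ref{r8:tab:fifteen} that each $x\in G$ occurs with multiplicity one in exactly six listed objects, and that the pairwise counts reproduce the $4\times4$ invertible block of \eqref{r8:eq:M}. Since $M_{x,x}=6$, no further central simple has a restriction meeting $G$. Any central invertible restricts to an element of $G$, and by Lemma~\ref{r8:lem:pointed} it is unique given that restriction. So the central invertibles are exactly $\{\one,P,Q,P\otimes Q\}$, with $P^2\cong Q^2\cong\one$, which is $C_2^2$.

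The step I expect to be the main obstacle is applying Galois reflection. One must verify that $\pi$ maps residual summands of $I(h)$ to residual summands of $I(h)$: it preserves $m_h$ by \eqref{eq:reflection}, and it preserves twists because twists on $\U$ are $\pm1$. If that becomes delicate, the fallback is to do the same Gram analysis for $I(k)$ and use the twist-symmetric $P$-action to pair the two solutions and force $c_3=1$.
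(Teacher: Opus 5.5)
Your proposal is correct, but at the decisive step it argues differently from the paper.

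The paper treats $h$ and $k$ together. After removing the ten known objects it obtains the residual system $\sum a^2=\sum b^2=3$, $\sum ab=1$, $\sum ac=\sum bc=2$. It then lets $z$ be the $\rho$-multiplicity of the unique residual object meeting both $h$ and $k$, and eliminates the two bad values separately:
\begin{itemize}
\item $z=2$ fails by Lemma~\ref{r8:lem:pointed}, since two objects would then restrict to $h$.
\item $z=0$ fails by a tensor-square computation. The object $E$ with $F(E)\cong h\oplus k$ would satisfy $E\cong E^*\cong P\otimes E$, so $E\otimes E$ could contain only $\one$ and $P$, each once. This contradicts $\dim(E\otimes E)=4$.
\end{itemize}

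You instead work inside $I(h)$ alone and dispose of both bad values at once with the Galois reflection of Lemma~\ref{lem:galois} at $k=1$. Since the object with $\D$-part $h\oplus k$ is unique, $\pi$ fixes it, and $b\mapsto 2-b$ forces $c_3=1$. The $k$-side then follows from $P$-translation and the diagonal count $M_{x,x}=6$.

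Comparing the two routes:
\begin{itemize}
\item Yours is shorter and more uniform. It imports the signed Galois symmetry of the $S$-matrix and the character-row argument behind \eqref{eq:reflection} from Section~\ref{sec:bound}. This is legitimate, because that subsection covers every $k\ge1$ and no circularity arises.
\item The paper's argument uses only fusion rules, adjunction, and Lemma~\ref{r8:lem:pointed}, so it stays self-contained within the $\kappa=8$ section.
\end{itemize}

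Two small remarks on your write-up.
\begin{itemize}
\item The obstacle you anticipate is not one. $\pi$ preserves all of $m_\one,m_g,m_h,m_k$ and $u$, and the $h\oplus k$ object is the only central simple with that $\D$-part. So $\pi$ must fix it, and twist preservation is never needed.
\item $Q\otimes A$ and $Q\otimes B$ are not identified by uniqueness of invertible lifts. They are identified because they are simple (an invertible tensored with a simple), they meet $h$ but neither $\one$ nor $g$, and the three residual constituents have pairwise distinct restrictions $h$, $h\oplus\rho$, $h\oplus k\oplus\rho$. Uniqueness of invertible lifts is needed only for $Q\otimes Q\cong\one$ and the final group statement.
\end{itemize}
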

\begin{proof}
Consider the ten objects consisting of the six in \eqref{r8:eq:Ione} and
$P,P\otimes A,P\otimes C_h,P\otimes C_k$. Lemma~\ref{r8:lem:first-ten} exhausts both $I(\one)$ and
$I(g)$. The four objects $C_h,C_k,P\otimes C_h,P\otimes C_k$ already exhaust the mixed Gram
entries $M_{h,t}=M_{k,t}=2$.
Every further simple meeting $h$ or $k$ consequently has restriction
\[
a h\oplus b k\oplus c \rho.
\]
Subtracting the contributions of the ten known objects from
\eqref{r8:eq:M} gives
\begin{equation}\label{r8:eq:hk-residual}
\sum a^2=\sum b^2=3,\qquad
\sum ab=1,\qquad
\sum ac=\sum bc=2.
\end{equation}
The first two square sums imply $a,b\in\{0,1\}$. Thus there is one
simple containing both $h,k$, two containing only $h$, and two containing
only $k$. Let $z$ be the $\rho$-multiplicity of the first of these;
then $z\in\{0,1,2\}$.

If $z=2$, both $h$-only objects have restriction $h$, contrary to the
uniqueness in Lemma~\ref{r8:lem:pointed}. Suppose $z=0$ and denote the
object with restriction $h\oplus k$ by $E$. Uniqueness of the object containing
both $h,k$ gives $E^*\cong E$ and $P\otimes E\cong E$. The restriction of $E\otimes E$ is
$2 (\one\oplus g)$. Among the already exhausted summands of $I(\one)$ and
$I(g)$, only $\one,P$ have restrictions supported on $\{\one,g\}$.
They are therefore the only possible constituents of $E\otimes E$.
Reciprocity gives each multiplicity one, contradicting
$\dim(E\otimes E)=4$.

Hence $z=1$. The two $h$-only objects have respective restrictions
$h$ and $h\oplus \rho$. The former is an invertible $Q$, and the latter is
$Q\otimes A$. Their $P$-translates give the two $k$-only objects. The remaining
object is $Q\otimes B$, with restriction $h\oplus k\oplus \rho$. All five have twist $-1$
by \eqref{r8:eq:low-masses}. Their restrictions distinguish them and
exhaust the possibilities in \eqref{r8:eq:hk-residual}.
Lemma~\ref{r8:lem:pointed} gives $Q\otimes Q\cong \one$ and uniqueness of the four
central invertibles; they commute because the Drinfeld center $\ZC$ is braided.
This proves the table and the assertion of exhaustiveness.
\end{proof}

Let $\mathcal R$ be the set of central simples not in
Table~\ref{r8:tab:fifteen}. By Proposition~\ref{r8:prop:fifteen}, each has
restriction
\begin{equation}\label{r8:eq:residual-restriction}
FZ\cong b_Z t\oplus c_Z \rho,\qquad b_Z,c_Z\in\ZZ_{\ge0}.
\end{equation}
Subtracting the fifteen restriction vectors from \eqref{r8:eq:M} gives
\begin{equation}\label{r8:eq:residual-gram}
\sum_{Z\in\mathcal R}b_Z^2=8,\qquad
\sum_{Z\in\mathcal R}b_Zc_Z=8,\qquad
\sum_{Z\in\mathcal R}c_Z^2=54.
\end{equation}

\paragraph{Dimension restrictions}

\begin{lemma}\label{r8:lem:four-rays}
For every $Z\in\mathcal R$, its restriction lies on one of the four rays
\[
b t,\qquad b (t\oplus\rho),\qquad b (t\oplus 2 \rho),\qquad c \rho,
\]
with positive integer coefficient on the indicated ray.
\end{lemma}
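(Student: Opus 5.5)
We would combine three ingredients: the residual Gram identities \eqref{r8:eq:residual-gram}, the Galois reflection of Lemma~\ref{lem:galois} (here $k=1$), and the integrality of $D/\dim Z$ in the modular category $\ZC$. Fix $Z\in\mathcal R$ with $FZ\cong b_Z t\oplus c_Z\rho$, as in \eqref{r8:eq:residual-restriction}. Then $\dim Z=2b_Z+c_Zd$.

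If $b_Z=0$, then $Z$ lies on the ray $c\,\rho$ with $c=c_Z>0$, and there is nothing to prove. Assume from now on that $b_Z>0$.

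\emph{Step 1 (size of $b_Z$).} The first identity in \eqref{r8:eq:residual-gram} gives $b_Z^2\le 8$, so $b_Z\in\{1,2\}$.

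\emph{Step 2 (Galois reflection).} In the notation of Notation~\ref{not:UVW} we have $a_Z=2b_Z>0$. Lemma~\ref{lem:galois} with $k=1$ then produces a simple central object $\pi(Z)$ with the same $\D$-part $b_Zt$ and with $\rho$-multiplicity $b_{\pi(Z)}=a_Z-c_Z=2b_Z-c_Z$. Since multiplicities are nonnegative, $0\le c_Z\le 2b_Z$. For $b_Z=1$ this already gives $c_Z\in\{0,1,2\}$, which are exactly the three rays $t$, $t\oplus\rho$, $t\oplus2\rho$.

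\emph{Step 3 (integrality).} For $b_Z=2$ it remains to exclude $c_Z\in\{1,3\}$. The global dimension of $\ZC$ is $D^2$, and the Verlinde formula (unitarity of $S$) makes $D/\dim Z$ an algebraic integer. Taking norms from $\Q(s)$ to $\Q$ yields
\[
N(\dim Z)\mid N(D),\qquad N(D)=48^2-16^2\cdot 6=768=2^8\cdot 3 .
\]
Using $d+d'=8$ and $dd'=-8$, we compute
\[
N(2b+cd)=4b^2+16bc-8c^2=4(b^2+4bc-2c^2).
\]
For $(b,c)=(2,1)$ and $(2,3)$ the bracket $b^2+4bc-2c^2$ equals $10$. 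Then $40$ would have to divide $768$, which is false since $5\nmid 768$. These two cases are therefore excluded.

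\emph{Conclusion.} The surviving possibilities are $c_Z\in\{0,2,4\}$ when $b_Z=2$ and $c_Z\in\{0,1,2\}$ when $b_Z=1$. In every case the restriction is $b\,t$, $b(t\oplus\rho)$ or $b(t\oplus2\rho)$, which completes the lemma.

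\emph{Main obstacle.} The delicate step is the sign $\epsilon_Z=+1$ in the Galois reflection, since Step~2 depends on it. I would reprove it in the present normalization as in Lemma~\ref{lem:galois}. We have $\sigma(d)=8-d$ and $D/\sigma(D)=\varepsilon$, and so
\[
\epsilon_Z\dim\pi(Z)=\varepsilon\,\sigma(2b_Z+c_Zd)=2b_Z+(2b_Z-c_Z)d .
\]
The restriction of $\pi(Z)$ has nonnegative coefficients, so its dimension has nonnegative integer part; since $2b_Z>0$, this forces $\epsilon_Z=1$. To see that the $\D$-part $b_Zt$ is preserved, I would use the rational $S$-entries $S_{B,Z}$ and $S_{C_\lambda,Z}$ from \eqref{eq:central-character}: they are fixed by $\sigma$, hence unchanged under $\pi$, and they recover the $\D$-part of the restriction.
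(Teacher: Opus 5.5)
Your proof is correct and follows the paper's argument. The steps match: $b\in\{1,2\}$ from the residual Gram identities; $0\le c\le 2b$ from the Galois reflection, where the paper writes the reflected dimension as $|2b+(2b-c)d|$ instead of citing Lemma~\ref{lem:galois}, but the sign argument is the same; and exclusion of $(b,c)=(2,1),(2,3)$ by integrality of $D/\dim Z$. The only difference is cosmetic: you test integrality through the norm ($40$ does not divide $768$), whereas the paper computes the nonintegral trace $48/5$ of $D/\dim Z$. Both checks are valid.
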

\begin{proof}
If $b_Z=0$, the assertion is immediate. Otherwise write $b=b_Z$,
$c=c_Z$; then $b\in\{1,2\}$ by \eqref{r8:eq:residual-gram}.
Let $\sigma$ be a cyclotomic field automorphism restricting to the
nontrivial automorphism of $\QQ(s)$. Signed Galois symmetry of the
normalized matrix $\Ss/D$ gives a permutation $\widehat\sigma$ of
central simples satisfying
\begin{equation}\label{r8:eq:galois-dimension}
\dim\widehat\sigma(Z)=\frac D{D'}\,|\sigma(\dim Z)|
=\left|\varepsilon\sigma(2b+cd)\right|
=\left|2b+(2b-c)d\right|.
\end{equation}
See \cite[Section~4.1, equation~(4.3)]{DLN}; the absolute value records
the sign in the signed permutation, since all dimensions are positive.
If $c>2b$, then $2b+(2b-c)d<0$, because $d>4\ge2b$.
Its absolute value has rational coefficient $-2b$ in the basis $1,d$.
But the dimension of any restriction to $\cC$ has nonnegative
coefficients in this basis. Hence $0\le c\le2b$.

For every central simple $Z$, the number $D/\dim Z$ is an algebraic
integer. Indeed, the formal codegree attached to $Z$ in the modular
category $\cZ$ is
\[
 f_Z=\frac{\FPdim(\cZ)}{\dim(Z)^2}=\left(\frac D{\dim Z}\right)^2.
\]
Formal codegrees are algebraic integers. Thus $D/\dim Z$ satisfies the
monic equation $X^2-f_Z=0$ over the ring of algebraic integers, and
transitivity of integrality shows that it is itself an algebraic integer.
If $b=2$, the possibilities $c=1,3$ would give, respectively,
\[
\frac D{4+d}=\frac{24+4s}{5},\qquad
\frac D{4+3d}=\frac{24-4s}{5}.
\]
Both have nonintegral field trace $48/5$, so neither is an algebraic
integer. The remaining pairs $(b,c)$ give exactly the four stated rays.
\end{proof}

\begin{lemma}\label{r8:lem:U}
There is at most one central simple $U$ whose restriction is supported
on $t$; if it exists, $F(U)\cong t$. There is then exactly one residual simple
with restriction on the ray $b (t\oplus 2 \rho)$, namely $U\otimes A$, and
\[
F(U\otimes A)\cong t\oplus 2 \rho,\qquad \theta_{U\otimes A}=\theta_U.
\]
If $U$ does not exist, there are no residual simples on that ray.
\end{lemma}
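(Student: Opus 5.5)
The plan is to first pin down objects with restriction supported on $t$, then produce $U\otimes A$, and finally show that nothing else lies on the ray $b(t\oplus2\rho)$. By Lemma~\ref{r8:lem:four-rays} and \eqref{r8:eq:residual-gram}, a residual simple $U$ with $F(U)$ supported on $t$ has $F(U)\cong bt$ with $b\in\{1,2\}$.

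\emph{Step 1: $U\otimes U^*$ and $b=1$.} The restriction $F(U\otimes U^*)\cong b^2(t\otimes t)\cong b^2\bigoplus_{x\in G}x$ is supported on $G$. By Proposition~\ref{r8:prop:fifteen}, the only central simples whose restriction is supported on $G$ are the four central invertibles $\one,P,Q,P\otimes Q$. So $U\otimes U^*$ is a sum of central invertibles. For an invertible $X$, reciprocity gives $[U\otimes U^*,X]=[U,X\otimes U]\le1$, since both objects are simple. Hence $\dim(U\otimes U^*)\le4$, whereas $\dim(U\otimes U^*)=4b^2$. This forces $b=1$, so $F(U)\cong t$. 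It also forces $U\otimes U^*\cong\one\oplus P\oplus Q\oplus P\otimes Q$, so $X\otimes U\cong U$ for every central invertible $X$.

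\emph{Step 2: uniqueness of $U$.} If $U'$ is another such object, then $F(U'\otimes U^*)\cong\bigoplus_{x\in G}x$. As in Step 1, $U'\otimes U^*$ is a sum of central invertibles, so $U'\cong X\otimes U$ for some central invertible $X$. By Step 1, $X\otimes U\cong U$, hence $U'\cong U$.

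\emph{Step 3: the object $U\otimes A$.} Its restriction is $F(U\otimes A)\cong t\otimes(\one\oplus\rho)\cong t\oplus2\rho$. For simplicity, compute
\[
[U\otimes A,U\otimes A]=[A\otimes A^*,U^*\otimes U]=\sum_X[A\otimes A^*,X],
\]
where $X$ runs over the four central invertibles. The unit occurs once in $A\otimes A^*$. For $X\ne\one$, $[A\otimes A^*,X]=[A,X\otimes A]=0$, because $X\otimes A$ is a different entry of Table~\ref{r8:tab:fifteen}. Thus $U\otimes A$ is simple, and it is residual because its restriction avoids $G$. For the twist, apply \eqref{eq:balancing-general} to the simple product $U\otimes A$: it gives $\Ss_{A,U}=\theta_{U\otimes A}\theta_U^{-1}\dim(U\otimes A)$, using $\theta_A=1$. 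By \eqref{eq:central-character}, $\Ss_{A,U}=\dim A\,\chi_-(t)=2\varepsilon=\dim(U\otimes A)$. Hence $\theta_{U\otimes A}=\theta_U$.

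\emph{Step 4 (main obstacle): no other residual simple lies on the ray $b(t\oplus2\rho)$.} Let $Z$ lie on this ray. By \eqref{r8:eq:residual-gram}, $b\in\{1,2\}$. The idea is to use $A$ to pull $Z$ back to an object supported on $t$. Expanding $(t\oplus2\rho)\otimes(\one\oplus\rho)$ shows that $F(Z\otimes A)$ contains $t$ with multiplicity $5b$, together with all of $G$ and many copies of $\rho$. By reciprocity, $[Z,W\otimes A]=[Z\otimes A,W]$ for every central simple $W$.
\begin{itemize}
\item First, run through the table objects and the rays of Lemma~\ref{r8:lem:four-rays}. Using the Gram totals \eqref{r8:eq:residual-gram} and the $t$-entries already used up, the goal is to show that $Z\otimes A$ must contain a simple $W$ with $F(W)$ supported on $t$.
\item Given such a $W$, Steps 1--2 give $W\cong U$. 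Then $Z$ is a constituent of the simple object $U\otimes A$, so $Z\cong U\otimes A$ and $b=1$. The same argument shows there is no such $Z$ when $U$ does not exist.
\end{itemize}
If this bookkeeping becomes unwieldy, the fallback is a counting argument. Record the ray multiplicities against the totals $\sum b_Z^2=8$, $\sum b_Zc_Z=8$, $\sum c_Z^2=54$. Add the $t$- and $\rho$-indicator traces of \eqref{r8:eq:indicator} applied to $I(t)$ and $I(\rho)$, and the Galois-dimension formula \eqref{r8:eq:galois-dimension}, which sends the ray $b(t\oplus2\rho)$ to dimension $2b$. The aim is to exclude a $b=2$ point or a second point on that ray.
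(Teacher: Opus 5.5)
\textbf{Gap in Step 4.} Your Steps 1--3 are correct and follow the paper's argument closely. The bound $4b^2\le 4$ via the four central invertibles, the uniqueness of $U$, the simplicity of $U\otimes A$ by reciprocity, and $\theta_{U\otimes A}=\theta_U$ from $\Ss_{A,U}=2\varepsilon=\dim(U\otimes A)$ together with balancing all match.

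The gap is Step 4, and it carries the real content of the ``exactly one'' and ``none'' assertions. You state a goal, that $Z\otimes A$ must contain a $t$-supported simple, but give no argument for it. That claim is essentially as strong as the statement being proved. The fallback lists tools (Gram totals, indicator traces) but never derives an exclusion. As written, nothing rules out any of the following:
\begin{enumerate}
\item a simple on the ray $2(t\oplus 2\rho)$;
\item a second simple on the ray $t\oplus 2\rho$;
\item any simple on that ray when $U$ does not exist.
\end{enumerate}

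\textbf{The missing step.} It is short, and you already name its key ingredient in the fallback. The Galois permutation $\widehat\sigma$ behind \eqref{r8:eq:galois-dimension} is a bijection on central simples with $\dim\widehat\sigma(Z)=\varepsilon\,|\sigma(\dim Z)|$. It interchanges the dimensions $2b$ and $2b+2bd=2b\varepsilon$.

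For $b=1,2$, none of these dimensions occurs in Table~\ref{r8:tab:fifteen}, so simples of these dimensions are residual. Their restrictions have the form $b_Zt\oplus c_Z\rho$, so irrationality of $d$ forces them to be exactly $bt$ and $b(t\oplus 2\rho)$, respectively.

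Hence the number of simples on the ray $b(t\oplus 2\rho)$ equals the number of simples with restriction $bt$. By your Steps 1--2, that number is $x$ for $b=1$ and $0$ for $b=2$. Combined with Step 3, this gives exactly one object on the ray, namely $U\otimes A$, when $U$ exists, and none otherwise. No indicator or Gram bookkeeping is needed.
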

\begin{proof}
Suppose $FU\cong b t$. The restriction of $U\otimes U^*$ is supported on $G$.
By Proposition~\ref{r8:prop:fifteen}, every simple constituent is one of
$\one,P,Q,P\otimes Q$. Each has multiplicity at most one, since for an invertible
$L$,
\[
[U\otimes U^*,L]=\dim\Hom(L\otimes U,U)\le1.
\]
Consequently $4b^2=\dim(U\otimes U^*)\le4$, so $b=1$ and
\begin{equation}\label{r8:eq:UUstar}
U^*\otimes U\cong U\otimes U^*\cong \one\oplus P\oplus Q\oplus (P\otimes Q).
\end{equation}
If distinct $U,V$ both restricted to $t$, then $U\otimes V^*$ would be supported
on the same four central invertibles, each with multiplicity at most one,
but would not contain $\one$. This contradicts $\dim(U\otimes V^*)=4$.

The dimension transformation in \eqref{r8:eq:galois-dimension} interchanges
$2b$ and $2b+2bd$. None of these dimensions, for $b=1,2$, occurs in
Table~\ref{r8:tab:fifteen}. A simple of either dimension must therefore be
residual, and irrationality identifies its restriction as $b t$ or
$b (t\oplus 2 \rho)$. The Galois permutation gives equal numbers of objects of
each paired dimension. Thus there is exactly one object on the latter
ray when $U$ exists, and none otherwise.

To identify that object, reciprocity and \eqref{r8:eq:UUstar} give
\[
\dim\End(U\otimes A)=\sum_{L\in\{\one,P,Q,P\otimes Q\}}\dim\Hom(A,L\otimes A)=1.
\]
Indeed, the four restrictions $F(L\otimes A)\cong FL\oplus \rho$ are distinct. Therefore
$U\otimes A$ is simple and $F(U\otimes A)\cong t\oplus 2 \rho$. Finally
\eqref{eq:central-character}, for the character $\chi_-$ of $A$, gives
$\Ss_{A,U}=2\varepsilon=\dim(U\otimes A)$. Since $U\otimes A$ is simple and
$\theta_A=1$, the balancing identity \eqref{eq:balancing-general} implies
$\theta_{U\otimes A}=\theta_U$.
\end{proof}

\paragraph{A multiset of forty-six twists}

Let $x\in\{0,1\}$ record whether $U$ exists. Denote the remaining
residual simples by $V$ and $W$ according to their restrictions
\[
FV\cong b_V (t\oplus\rho),\qquad FW\cong c_W \rho.
\]
Equations \eqref{r8:eq:residual-gram} and Lemma~\ref{r8:lem:U} become
\begin{equation}\label{r8:eq:counts}
\sum_Vb_V^2=8-2x,\qquad
\sum_Wc_W^2=46-2x.
\end{equation}
Define a multiset $\Lambda$ by taking $c_W^2$ copies of $\theta_W$ for
each $W$, and, when $x=1$, two additional copies of $\theta_U$.
In particular,
\begin{equation}\label{r8:eq:size46}
|\Lambda|=46.
\end{equation}
Put $p_n=\sum_{\zeta\in\Lambda}\zeta^n$.

\begin{proposition}\label{r8:prop:moments}
One has $\nu_2(t)=1$. If $\epsilon_{\rho}=\nu_2(\rho)\in\{1,-1\}$, then
\begin{align}
p_1&=-2s,\label{r8:eq:p1}\\
p_2&=-2+(4\epsilon_{\rho}-12)s,\label{r8:eq:p2}\\
p_4&=-2+4s\bigl(\nu_4(\rho)-\nu_4(t)-2\bigr).
\label{r8:eq:p4}
\end{align}
\end{proposition}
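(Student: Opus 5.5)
The plan is to evaluate the Ng--Schauenburg formula \eqref{r8:eq:indicator} for $X=t$ and $X=\rho$ in the powers $n=1,2,4$. For each formula we split the sum into the fifteen objects of Table~\ref{r8:tab:fifteen} and the residual set $\mathcal R$. The multiplicities are $m_t(Z)=[FZ,t]$ and $m_\rho(Z)=[FZ,\rho]$, read off from the table, from \eqref{r8:eq:residual-restriction}, Lemma~\ref{r8:lem:four-rays} and Lemma~\ref{r8:lem:U}. Among the residual simples we have:
\begin{itemize}
\item $U$, with $m_t=1$, $m_\rho=0$ and $\dim U=2$;
\item $U\otimes A$, with $m_t=1$, $m_\rho=2$, $\dim=2\varepsilon$ and twist $\theta_U$;
\item the simples $V$, with $m_t=m_\rho=b_V$ and $\dim V=b_V(2+d)$;
\item the simples $W$, with $m_\rho=c_W$ and $\dim W=c_Wd$.
\end{itemize}
The key observation is that the $t$-formulas let us eliminate the unknown $V$-contributions from the $\rho$-formulas. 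What then remains is exactly $d\,p_n$, because the $U$-terms combine into two copies of $\theta_U^n$.

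\emph{Step 1 ($t$, powers $1,2,4$).} In Table~\ref{r8:tab:fifteen} only $C_h,C_k$ (twist $1$) and $P\otimes C_h,P\otimes C_k$ (twist $-1$) meet $t$, each with dimension $2(d+2)$. Their first-power contribution cancels, and their even-power contribution is $8(d+2)=D$. Since $\dim I(t)=2D$, the residual $t$-part has total dimension $D$. Suppose $\nu_2(t)=-1$. Then the residual second trace would be $-2D$, which is impossible for an object of dimension $D$ with unimodular twists. Hence $\nu_2(t)=1$. Writing $\Sigma_j=\sum_V b_V^2(2+d)\theta_V^j$, we obtain
\[
x\theta_U^j(4+2d)+\Sigma_j=\begin{cases}0,&j=1,2,\\ D(\nu_4(t)-1),&j=4,\end{cases}
\]
using $2+2\varepsilon=2(d+2)$.

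\emph{Step 2 ($\rho$).} For the table part we pair the objects by twist sign. For $j=1$ the $A$-quartet and the pair $B,Q\otimes B$ cancel. The four $C$'s with $t$ cancel in pairs, leaving $C_0$, which contributes $4(d+2)=D/2$. For even $j$ every table twist is $\pm1$, so the table contribution is simply $\sum c_Z\dim Z$ over the table objects. From the first column of \eqref{r8:eq:M} minus the residual, this is $4\varepsilon+2(d+2)+20(d+2)$; the bookkeeping will be checked carefully here. The residual part equals
\[
4x\varepsilon\theta_U^j+\Sigma_j+d\sum_W c_W^2\theta_W^j.
\]
Substituting $\Sigma_j$ from Step 1 and using $4\varepsilon-(4+2d)=2d$, this becomes $d\,p_j$, up to the $\nu_4(t)$ correction for $j=4$. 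The $\rho$-formula therefore reads
\[
\text{(table)}_j+d\,p_j-[j=4]\,D(\nu_4(t)-1)=D\,\nu_j(\rho).
\]

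\emph{Step 3 (arithmetic).} We solve for $p_j$ using $D/d=4s$ and $D=8(d+2)$, and rewrite $(d+2)/d$ and $\varepsilon/d$ through $d^2=8+8d$. For $j=1$ this gives $p_1=-D/(2d)=-2s$. For $j=2$ it gives $\epsilon_\rho\cdot4s$ minus the table term over $d$, which should reduce to \eqref{r8:eq:p2}. For $j=4$ the same table term appears, together with $4s(\nu_4(\rho)-\nu_4(t)+1)$, and this should give \eqref{r8:eq:p4}.

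The main obstacle is the exact bookkeeping of the even-power table term: each of the fifteen rows must be counted with the correct $m_\rho$ and dimension. The constant $-2$ and the coefficients $-12$ and $-2$ in front of $s$ must emerge from it. I would first cross-check this term against the $(\rho,\rho)$-entry $80$ of \eqref{r8:eq:M} and the count $|\Lambda|=46$.
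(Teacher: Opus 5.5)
Your proposal is correct and is essentially the paper's argument. Using the $t$-indicator sums to eliminate $\Sigma_j$ from the $\rho$-indicator sums is the same as the paper's step of subtracting the two sums: the $V$-terms cancel, and the $U$ and $U\otimes A$ terms combine to $2d\,\theta_U^n$. Your proof of $\nu_2(t)=1$ is identical to the paper's.

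One slip: the correction term in your Step~2 display should appear as $+[j=4]\,D(\nu_4(t)-1)$ on the left-hand side. Your Step~3 expression $4s(\nu_4(\rho)-\nu_4(t)+1)$ already has the correct sign. For the bookkeeping you flagged, the even-power table term is $4\varepsilon+22(d+2)=26d+48$, and $(26d+48)/d=2+12s$. This gives exactly the constants in the statement.
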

\begin{proof}
In the difference between the two indicator sums
\eqref{r8:eq:indicator} for $\rho$ and $t$, the $V$-contributions cancel.
Each $W$ contributes $c_W^2d\theta_W^n$.
If $U$ exists, its contribution is $-2\theta_U^n$, whereas that of
$U\otimes A$ is $(2+2d)\theta_U^n$ by Lemma~\ref{r8:lem:U}; their sum is
$2d\theta_U^n$. Thus the full residual contribution is $dp_n$.

Let $K_n$ denote the contribution from Table~\ref{r8:tab:fifteen} to this
difference. The table gives
\begin{equation}\label{r8:eq:Kn}
K_n=(12+7d)(1+(-1)^n)+8+4d
=\begin{cases}
4(d+2),&n\text{ odd},\\
18d+32,&n\text{ even}.
\end{cases}
\end{equation}
Therefore
\begin{equation}\label{r8:eq:all-moments}
p_n=\frac{D(\nu_n(\rho)-\nu_n(t))-K_n}{d}.
\end{equation}
The first indicators of $t,\rho$ vanish, so
$p_1=-4(d+2)/d=-2s$.

Exactly four objects in Table~\ref{r8:tab:fifteen} contain $t$, each once
and each of dimension $2(d+2)$. Their twists are $1,1,-1,-1$.
Their contribution to the twist-square trace of $I(t)$ is therefore
$8(d+2)=D$. The remaining summands of $I(t)$ also have total dimension
$D$, since $\dim I(t)=2D$. If $\nu_2(t)=-1$, their twist-square trace
would have to equal $-2D$, contradicting the triangle inequality.
Hence $\nu_2(t)=1$.

Finally $D/d=4s$ and $(18d+32)/d=2+8s$.
Substitution in \eqref{r8:eq:all-moments} for $n=2,4$ proves
\eqref{r8:eq:p2} and \eqref{r8:eq:p4}.
\end{proof}

Let $K=\QQ(\sqrt6)$. Fix a cyclotomic field $L=\QQ(\zeta_N)$ with
$24\mid N$ and with no prime divisors of $N$ other than $2,3$.
Here $\zeta_m=\exp(2\pi\ii/m)$. Define
\begin{equation}\label{r8:eq:trace-functions}
P=\frac1{[L:K]}\Tr_{L/K},\qquad
P(z)=a(z)+b(z)\sqrt6,\qquad a(z),b(z)\in\QQ.
\end{equation}
The functions $a,b$ are $\QQ$-linear. By transitivity of normalized
trace, their values on a fixed algebraic number do not change if $L$
is enlarged within this class of cyclotomic fields.

\begin{lemma}\label{r8:lem:root-table}
If $\zeta\in L$ has exact order $m$, the quantities needed below are
\begin{equation}\label{r8:eq:root-table}
\begin{array}{c|ccc}
m&|b(\zeta)|&|b(\zeta^2)|&a(\zeta^2)+a(\zeta^4)\\ \hline
1,2&0&0&2\\
3,6,8&0&0&-1\\
24&1/4&0&1/2\\
48&0&1/4&0\\
\text{all other }m&0&0&0.
\end{array}
\end{equation}
\end{lemma}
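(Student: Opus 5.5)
The plan is to compute $P(\zeta)$ explicitly for every root of unity $\zeta\in L$. Since $N$ has no prime divisors other than $2,3$, the exact order $m$ of $\zeta$ has the form $2^\alpha3^\beta$. We treat two cases, according to whether $K=\QQ(\sqrt6)$ lies in $\QQ(\zeta)$. By \cite[Theorem~6.2]{Larson}-style conductor considerations, as in the proof of Lemma~\ref{lem:root-bound}, the conductor of $K$ is $24$. Hence $K\subseteq\QQ(\zeta)$ exactly when $24\mid m$.

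\emph{Case $24\nmid m$.} Then $K\cap\QQ(\zeta)=\QQ$, and restriction maps $\Gal(L/K)$ onto $\Gal(\QQ(\zeta)/\QQ)$ with equal fibres. So $P(\zeta)=\Tr_{\QQ(\zeta)/\QQ}(\zeta)/\varphi(m)=\mu(m)/\varphi(m)$ is rational, and $b(\zeta)=0$.

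\emph{Case $24\mid m$.} Here transitivity gives $P(\zeta)=\Tr_{\QQ(\zeta)/K}(\zeta)/[\QQ(\zeta):K]$.
\begin{itemize}
\item If $16\mid m$, the automorphism $\zeta\mapsto\zeta^{1+m/2}=-\zeta$ fixes $K$ because $24\mid m/2$, so $P(\zeta)=0$.
\item If $9\mid m$, the same argument with $\zeta\mapsto\zeta^{1+m/3}$ fixes $K$ and permutes $\zeta,\zeta\omega,\zeta\omega^2$ with $1+\omega+\omega^2=0$. Averaging over the subgroup it generates again gives $P(\zeta)=0$.
\item The only surviving order is therefore $m=24$. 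For $m=24$ we identify the stabilizer of $\sqrt6=\sqrt2\sqrt3$ in $(\ZZ/24)^\times$. An exponent $a$ fixes $\sqrt2$ iff $a\equiv\pm1\pmod 8$, and fixes $\sqrt3$ iff $a\equiv\pm1\pmod{12}$. So the stabilizer is $\{1,5,19,23\}$, and
\[
\Tr_{\QQ(\zeta)/K}(\zeta)=2\cos15^\circ+2\cos75^\circ=\sqrt6
\]
up to the choice of primitive root, which only changes the sign. Hence $P(\zeta)=\pm\sqrt6/4$, giving $|b(\zeta)|=1/4$ and $a(\zeta)=0$.
\end{itemize}

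Collecting, $b(\zeta)\neq0$ only for $m=24$. Since $\zeta^2$ has order $24$ exactly when $m=48$, the column $|b(\zeta^2)|$ is $1/4$ only for $m=48$.

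For the last column we combine $a(\xi)=\mu(\ord\xi)/\varphi(\ord\xi)$ when $24\nmid\ord\xi$ with $a(\xi)=0$ when $24\mid\ord\xi$. We then tabulate the orders of $\zeta^2,\zeta^4$:
\begin{itemize}
\item $m=1,2$ give $1+1$.
\item $m=3,6$ give $-\tfrac12-\tfrac12$.
\item $m=8$ gives $0+(-1)$.
\item $m=24$ gives $\zeta^2$ of order $12$ (where $\mu=0$) and $\zeta^4$ of order $6$, for a total of $\tfrac12$.
\item $m=4,12,16,48$ and all larger orders give $0$. For example, $m=4$ gives $-1+1$ and $m=12$ gives $\tfrac12-\tfrac12$; beyond that $\mu$ vanishes or the previous case applies.
\end{itemize}
Independence of the choice of $L$ was already noted after \eqref{r8:eq:trace-functions}.

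The main point needing care is the vanishing argument when $9\mid m$ or $16\mid m$: one must check that the relevant automorphisms extend to elements of $\Gal(L/K)$. This holds since they fix $K\subseteq\QQ(\zeta)$ and $\Gal(L/\QQ)\to\Gal(\QQ(\zeta)/\QQ)$ is surjective. The sign convention for $b$ is immaterial because the table only records absolute values.
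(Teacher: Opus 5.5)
Your proposal is correct and follows essentially the paper's proof. The conductor $24$ separates the rational case from the case $K\subseteq\QQ(\zeta)$. Orders $m>24$ with $24\mid m$ are killed by multiplication by $p$th roots of unity; your automorphisms $\zeta\mapsto\zeta^{1+m/p}$ are the paper's conjugates over $\QQ(\zeta_{m/p})$. The case $m=24$ is settled by the kernel $\{1,5,19,23\}$, and the last column follows from $a(\xi)=\mu(\ord\xi)/\varphi(\ord\xi)$.

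Two minor points. The conductor fact is simply that $\QQ(\sqrt6)$ has discriminant $24$; it is not something taken from Larson. Also, orders such as $m=9,18,32,36$ in the last column are covered only implicitly by your closing remark that $\mu$ vanishes, which is nonetheless correct.
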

\begin{proof}
The conductor of $K$ is $24$. If $24\nmid m$, then
$K\cap\QQ(\zeta_m)=\QQ$, and $P(\zeta)$ is rational. If $24\mid m$
but $m>24$, choose $p\in\{2,3\}$ with $24\mid m/p$. The conjugates of
a primitive $m$th root over $\QQ(\zeta_{m/p})$ are obtained by
multiplication by the $p$th roots of unity, so their sum is zero.
Transitivity gives $P(\zeta)=0$.

For $m=24$, the kernel of the quadratic character associated with $K$
is $\{1,5,19,23\}\subset(\ZZ/24\ZZ)^\times$, and
\[
\zeta_{24}+\zeta_{24}^{5}+\zeta_{24}^{19}+\zeta_{24}^{23}=\sqrt6.
\]
The sum over the other coset is $-\sqrt6$. Hence a primitive
$24$th root has relative trace $\sqrt6/4$ or $-\sqrt6/4$.
This proves the first column of absolute values. Applying it to
$\zeta^2$ proves the second: the order of $\zeta^2$ is $24$ precisely
when $m=48$.

Since $a(z)$ is normalized absolute trace from $L$ to $\QQ$, the
usual primitive-root sum, or M\"obius inversion, gives
\begin{equation}\label{r8:eq:ramanujan}
a(\zeta^j)=\frac{\mu(e_j)}{\varphi(e_j)},\qquad
 e_j=\frac m{\gcd(m,j)}.
\end{equation}
Here $\mu$ is the M\"obius function and $\varphi$ is Euler's totient.
For $m=2^u3^v$, substitution of $j=2,4$ into \eqref{r8:eq:ramanujan}
gives the final column of \eqref{r8:eq:root-table}. In particular, if
$v\ge2$ both terms vanish, and if $v\le1$ the remaining values depend
only on $u\le3$, except that all larger $u$ give zero.
\end{proof}

\begin{theorem}\label{r8:thm:mixed-moment}
Let $\Lambda$ be a finite multiset of roots of unity whose orders have
no prime divisors other than $2,3$, and set
$p_j=\sum_{\zeta\in\Lambda}\zeta^j$. Choose $L$ containing these roots
and $\zeta_{24}$, and define $a,b$ by \eqref{r8:eq:trace-functions}. Then
\begin{equation}\label{r8:eq:mixed-moment}
|\Lambda|\ge 6|b(p_1)|+4|b(p_2)|-a(p_2+p_4).
\end{equation}
\end{theorem}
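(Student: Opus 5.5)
The plan is to reduce \eqref{r8:eq:mixed-moment} to an inequality for a single root of unity and then check that inequality case by case using Lemma~\ref{r8:lem:root-table}. The field $L=\QQ(\zeta_N)$ is chosen as in \eqref{r8:eq:trace-functions}, with $24\mid N$ and $N$ divisible only by $2$ and $3$. This is possible because every root in $\Lambda$ has order of the form $2^u3^v$. So each $\zeta\in\Lambda$, and each of its powers, lies in $L$, and Lemma~\ref{r8:lem:root-table} applies to it. By the remark after \eqref{r8:eq:trace-functions}, the values of $a$ and $b$ do not depend on the choice of $L$ within this class.

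First, I use $\QQ$-linearity of $a$ and $b$. We have $b(p_1)=\sum_{\zeta\in\Lambda}b(\zeta)$ and $b(p_2)=\sum_{\zeta\in\Lambda}b(\zeta^2)$. The triangle inequality then gives
\[
6|b(p_1)|+4|b(p_2)|-a(p_2+p_4)\le\sum_{\zeta\in\Lambda}\Bigl(6|b(\zeta)|+4|b(\zeta^2)|-a(\zeta^2)-a(\zeta^4)\Bigr).
\]
Since $|\Lambda|=\sum_{\zeta\in\Lambda}1$, it therefore suffices to prove, for every root of unity $\zeta$ of order $m=2^u3^v$, the pointwise bound
\[
6|b(\zeta)|+4|b(\zeta^2)|-\bigl(a(\zeta^2)+a(\zeta^4)\bigr)\le1 .
\]

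Second, I verify this pointwise bound row by row from \eqref{r8:eq:root-table}:
\begin{itemize}
\item For $m\in\{1,2\}$ the left side is $-2$.
\item For $m\in\{3,6,8\}$ it is $0+0+1=1$.
\item For $m=24$ it is $6\cdot\tfrac14+0-\tfrac12=1$.
\item For $m=48$ it is $0+4\cdot\tfrac14-0=1$.
\item For every other $m$ it is $0$.
\end{itemize}
In all cases the left side is at most $1$, so summing over $\Lambda$ gives \eqref{r8:eq:mixed-moment}.

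The only step with real content is making sure the table in Lemma~\ref{r8:lem:root-table} covers every order $2^u3^v$. In particular, the final entry ``$a(\zeta^2)+a(\zeta^4)=0$ for all other $m$'' must hold. I would double-check this with \eqref{r8:eq:ramanujan} on the borderline orders:
\begin{itemize}
\item For $m=4$: $a(-1)+a(1)=-1+1=0$.
\item For $m=12$: $\mu(6)/\varphi(6)+\mu(3)/\varphi(3)=\tfrac12-\tfrac12=0$.
\item For $m=16$ and for $m$ with $v\ge2$: both terms vanish.
\end{itemize}
Apart from this check the argument is a direct consequence of linearity and the triangle inequality, and the weights $6$ and $4$ are exactly the ones for which the rows $m=24$ and $m=48$ are sharp.
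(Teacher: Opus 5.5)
Your proposal is correct and is essentially the paper's own proof. You read off the pointwise inequality $1+a(\zeta^2)+a(\zeta^4)\ge 6|b(\zeta)|+4|b(\zeta^2)|$ row by row from Lemma~\ref{r8:lem:root-table}, then sum over $\Lambda$ using linearity of $a,b$ and the triangle inequality on the two $b$-sums; your spot checks of the ``all other $m$'' row via \eqref{r8:eq:ramanujan} are also correct.
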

\begin{proof}
Each row of \eqref{r8:eq:root-table} gives the pointwise inequality
\[
1+a(\zeta^2)+a(\zeta^4)
\ge6|b(\zeta)|+4|b(\zeta^2)|.
\]
Sum over the multiset and apply the triangle inequality separately to
the two sums of $b$-coefficients. Linearity of $a,b$ gives
\eqref{r8:eq:mixed-moment}.
\end{proof}

\begin{example}\label{r8:ex:sharpness}
The conditions
\begin{equation}\label{r8:eq:sharp-moments}
p_1=-2\sqrt6,\qquad p_2=-2-8\sqrt6,\qquad p_4=-2
\end{equation}
are attained by a multiset of size $48$. To see this, put
\[
E_{24}=\{\zeta_{24}^r:r\in\{7,11,13,17\}\},\qquad
E_{48}=\{\zeta_{48}^r:0\le r<48,\ r\bmod24\in\{7,11,13,17\}\},
\]
and let $\mathcal P_m$ be the set of primitive $m$th roots. With an integer
multiple of a set denoting repeated copies, take
\[
\Lambda_*=2E_{24}\sqcup4E_{48}\sqcup\mathcal P_8
\sqcup\mathcal P_3\sqcup\mathcal P_6.
\]
The contributions to $(p_1,p_2,p_4)$ are respectively
\[
(-2\sqrt6,0,4),\quad (0,-8\sqrt6,0),\quad
(0,0,-4),\quad(-1,-1,-1),\quad(1,-1,-1).
\]
They sum to \eqref{r8:eq:sharp-moments}, and the cardinality is
$8+32+4+2+2=48$. Thus the numerical lower bound used below is exact at
the level of these three moments. No categorical realization of this
multiset is asserted.
\end{example}

\begin{theorem}\label{r8:nonexistence}
The near-Tambara--Yamagami fusion
category $\C=\C(\D,\kappa)$ with $n=2$ and $\kappa=8$ does not exist.
\end{theorem}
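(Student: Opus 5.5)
The plan is to feed the multiset $\Lambda$ of Proposition~\ref{r8:prop:moments} into the mixed-moment inequality of Theorem~\ref{r8:thm:mixed-moment}. The resulting lower bound on $|\Lambda|$ will exceed $|\Lambda|=46$ from \eqref{r8:eq:size46}.

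\emph{Step 1: orders of the twists.} Theorem~\ref{r8:thm:mixed-moment} requires every element of $\Lambda$ to have order divisible only by $2$ and $3$. Every element of $\Lambda$ is a twist of $\cZ$, so it suffices to control the Frobenius--Schur exponent. By the Cauchy theorem for spherical fusion categories (Bruillard--Ng--Rowell--Wang), the prime ideals dividing the Frobenius--Schur exponent are exactly those dividing the global dimension $D$. We have
\[
N_{K/\QQ}(D)=DD'=48^2-16^2\cdot6=768=2^8\cdot3,
\]
so only the primes $2$ and $3$ occur. Hence every twist, and in particular every element of $\Lambda$, has order of the form $2^u3^v$. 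This gives the hypothesis of Theorem~\ref{r8:thm:mixed-moment}, and we may take $L=\QQ(\zeta_N)$ with $N$ a $\{2,3\}$-number divisible by $24$.

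\emph{Step 2: evaluate the trace coordinates.} We apply $a$ and $b$ to \eqref{r8:eq:p1}--\eqref{r8:eq:p4}. From \eqref{r8:eq:p1}, $b(p_1)=-2$. From \eqref{r8:eq:p2}, $a(p_2)=-2$ and $b(p_2)=4\epsilon_\rho-12$, so $|b(p_2)|=12-4\epsilon_\rho\ge8$.

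For $p_4$, write $w=\nu_4(\rho)-\nu_4(t)-2\in\ZZ[\ii]$, using \eqref{r8:eq:fourth-cyclotomic}. Since $s\in K$, the normalized trace satisfies $P(sw)=sP(w)$. Moreover $\QQ(\ii)\cap K=\QQ$, so $P(\ii)=0$ and therefore $P(w)=\operatorname{Re}w\in\ZZ$. Thus $4sw$ contributes only to the $b$-coordinate, and $a(p_4)=-2$. This gives $a(p_2+p_4)=-4$.

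\emph{Step 3: conclude.} Inequality \eqref{r8:eq:mixed-moment} now gives
\[
46=|\Lambda|\ \ge\ 6\cdot2+4(12-4\epsilon_\rho)+4\ =\ 64-16\epsilon_\rho\ \ge\ 48,
\]
which is a contradiction. Hence $\C(\D,8)$ does not exist. Example~\ref{r8:ex:sharpness} shows that the bound $48$ is attained numerically, so the margin over $46$ is exactly the needed two.

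\emph{Main obstacle.} The delicate point is Step 1. One must make sure that the Cauchy theorem applies to the pseudo-unitary Galois conjugate with its spherical structure; it does, since it holds for any spherical fusion category. One must also make sure that the twists entering $\Lambda$, which are $\theta_W$ and $\theta_U$, are twists of $\cZ$ for the same spherical structure used in Proposition~\ref{r8:prop:moments}. A smaller check in Step 2 is that $\nu_4(t)$ and $\nu_4(\rho)$ contribute nothing to $a(p_4)$; this uses only that they lie in $\ZZ[\ii]$ and that $\QQ(\ii)$ is linearly disjoint from $K$.
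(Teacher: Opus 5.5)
Your proposal is correct and follows the paper's proof essentially step for step. You use the Cauchy theorem with $N_{K/\QQ}(D)=768$ to confine the twist orders to $\{2,3\}$-numbers, then evaluate $b(p_1)=-2$, $|b(p_2)|\ge 8$ and $a(p_2+p_4)=-4$ (your $P(\ii)=0$ argument rephrases the paper's remark that the relative trace to $K$ of an element of $\ZZ[\ii]$ is rational), and finally obtain the contradiction $46\ge 48$ from the mixed-moment inequality.
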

\begin{proof}
Work with the hypothetical pseudo-unitary category and the
multiset $\Lambda$ constructed before Proposition \ref{r8:prop:moments}.

The norm of the global dimension is
\[
N_{K/\QQ}(D)=(48+16\sqrt6)(48-16\sqrt6)=768=2^8\cdot3.
\]
Let $N_0$ be the Frobenius--Schur exponent of $\cC$. The Cauchy theorem
\cite[Theorem~3.9]{BNRW} identifies the prime ideals dividing the
global dimension with those dividing $N_0$ in the appropriate
cyclotomic integer ring. Every rational prime dividing $N_0$ must
therefore be $2$ or $3$. The exponent $N_0$ is the order of the twist
of $\cZ$ \cite[Theorem~5.5]{NS}, so all roots in $\Lambda$ have orders
supported on these two primes. We may apply
Theorem~\ref{r8:thm:mixed-moment}, taking $N=\operatorname{lcm}(24,N_0)$.

Equations \eqref{r8:eq:p1} and \eqref{r8:eq:p2} give
\[
b(p_1)=-2,\qquad b(p_2)=4\epsilon_{\rho}-12,\qquad a(p_2)=-2,
\qquad \epsilon_{\rho}=\nu_2(\rho)\in\{1,-1\}.
\]
In particular $|b(p_2)|\ge8$. By \eqref{r8:eq:fourth-cyclotomic},
$\nu_4(\rho)-\nu_4(t)\in\ZZ[\ii]$. Complex conjugation fixes $K$,
so its normalized relative trace to $K$ is rational. In
\eqref{r8:eq:p4}, the term multiplied by $\sqrt6$ consequently contributes
no rational coefficient. Thus $a(p_4)=-2$ and
\[
a(p_2+p_4)=-4.
\]
The mixed-moment inequality now requires
\[
46=|\Lambda|\ge6\cdot2+4\cdot8+4=48,
\]
a contradiction. This proves the theorem.
\end{proof}

\section{The rational near-Tambara--Yamagami categories}\label{sec:rational}

Assume in this section that $d=d_+\in\Z$. Then $\C$ is integral and
pseudo-unitary. The following argument is independent of the irrational
case and, in particular, does not assume $n=2$ at the outset.

\subsection{Integral parameter restrictions}
\begin{lemma}\label{lem:integral-param}
There is a positive integer $m$ such that
\[
d_-= -m,\qquad dm=2n^2,\qquad d>m,\qquad \kappa=d-m.
\]
Moreover, there are integers $a,m\geq1$ with
\begin{equation}\label{eq:am-param}
n=am,\qquad d=2a^2m,\qquad \kappa=m(2a^2-1).
\end{equation}
\end{lemma}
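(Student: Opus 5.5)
The plan is to use only the fusion relation $d^2=\kappa d+N$ together with the fact that, in the integral case, every central dimension listed in Lemma~\ref{lem:codegrees} is a rational integer.

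\emph{First assertion.} The numbers $d=d_+$ and $d_-$ are the two roots of $X^2-\kappa X-N$. Hence
\[
d+d_-=\kappa,\qquad dd_-=-N=-2n^2 .
\]
Since $d\in\Z$ and $\kappa\in\Z$, the number $d_-=\kappa-d$ is an integer. It is negative because $dd_-<0$ and $d>0$. Put $m=-d_-\ge1$. Then $dm=2n^2$ and $\kappa=d-m$. The condition $\kappa\ge0$ gives $d\ge m$. Equality $d=m$ would give $d^2=2n^2$, which is impossible since $\sqrt2$ is irrational. Therefore $d>m$.

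\emph{Step 1: $m\mid d$.} By Lemma~\ref{lem:codegrees}, $I(\one)$ has a simple summand of dimension $d^2/N$. Since $N=dm$, this dimension is $d/m$. The category $\C$ is integral, so $\ZC$ is integral and this dimension is a positive rational integer. (Equivalently, $d/m$ is a rational algebraic integer, being $\Delta/f_{\chi_-}$.) Write $d=me$ with $e\in\Z_{\ge1}$.

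\emph{Step 2: $m\mid n$.} Substituting into $dm=2n^2$ gives $m^2e=2n^2$, so
\[
e=2\left(\frac nm\right)^2 .
\]
Write $n/m=p/q$ in lowest terms. Then $q^2\mid 2p^2$, and since $\gcd(p,q)=1$ this forces $q^2\mid 2$, hence $q=1$. Thus $a:=n/m$ is a positive integer.

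\emph{Conclusion.} We now have $n=am$, $e=2a^2$, $d=me=2a^2m$, and $\kappa=d-m=m(2a^2-1)$, which is \eqref{eq:am-param}.

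The only step that uses more than arithmetic is Step~1, the integrality of $d/m$. It rests on the codegree dimension in \eqref{eq:unit-dimensions} being a dimension of an object in the integral center. The remaining steps are routine.
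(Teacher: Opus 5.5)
Your proof is correct and follows the paper's route. Both arguments rest on the fact that the $\chi_-$ summand of $I(\one)$ has integer dimension $d^2/N$, and both finish with elementary arithmetic. The difference is only in that arithmetic: you rewrite $d^2/N$ as $d/m$ and show $n/m\in\Z$, whereas the paper writes $(d/n)^2=2r$, shows that $d/n=2a$ is an even integer, and then reads off $n=am$ from $dm=2n^2$.
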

\begin{proof}
The equation $d(d-\kappa)=2n^2$ shows that the other root is
$d_-=-m$ for a positive integer $m$. The strict inequality $d>m$
follows from $\kappa>0$; the case $\kappa=0$ would give
$d=n\sqrt2$. The formal-codegree summand corresponding to $d_-$
has dimension
\[
\frac{N+d^2}{N+d_-^2}=-\frac d{d_-}=\frac{d^2}{N}.
\]
This rational algebraic integer is an integer. Thus $(d/n)^2=2r$
for some $r\in\Z_{>0}$. A rational number whose square is an
integer is an integer; since its square is even, $d/n$ is even.
Write $d/n=2a$ with $a\geq1$. The identities $d=2an$ and $dm=2n^2$ give $n=am$.
Substitution gives $d=2a^2m$ and
$\kappa=d-m=m(2a^2-1)$, as required.
\end{proof}

\begin{lemma}\label{lem:integral-unit}
With the notation of \eqref{eq:am-param}, write the multiplicity-free
decomposition
\[
 I(\one)\cong \one\oplus A\oplus B\oplus
 \bigoplus_{i=1}^{n^2-1}Y_i .
\]
Here $\one,A,B,Y_1,\ldots,Y_{n^2-1}$ are pairwise nonisomorphic simple
objects of $\ZC$, each occurring once, and
\[
 \FPdim(A)=2a^2,\qquad
 \FPdim(B)=2a^2+1,\qquad
 \FPdim(Y_i)=4a^2+2.
\]
Every summand $Z$ satisfies $[F(Z),\one]=1$.
\end{lemma}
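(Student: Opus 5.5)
This is a direct specialization of Lemma~\ref{lem:codegrees} to the integral parametrization of Lemma~\ref{lem:integral-param}. By Lemma~\ref{lem:codegrees}, the ring $K_0(\C)$ is commutative and has exactly $n^2+2$ characters:
\[
\chi_+,\quad \chi_-,\quad \eta,\quad \eta_\psi\ \ (\psi\ne1).
\]
Their formal codegrees are $\Delta$, $N+d_-^2$, $N$ and $n^2$. The formal-codegree theorem \cite[Theorem~2.13]{Ostrik} then gives the following.
\begin{itemize}
\item $I(\one)$ is multiplicity-free, with one simple summand $Z_\chi$ for each character.
\item $\FPdim Z_\chi=\Delta/f_\chi$, where we work in the positive spherical structure, legitimate since $\C$ is integral and hence pseudo-unitary.
\item The summand for $\chi_+$ is $\one$.
\end{itemize}
We label the summands for $\chi_-$, $\eta$ and $\eta_\psi$ as $A$, $B$ and $Y_i$ respectively. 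Distinct characters give nonisomorphic summands, so $\one,A,B,Y_1,\dots,Y_{n^2-1}$ are pairwise nonisomorphic.

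Next we compute the dimensions from \eqref{eq:unit-dimensions}, which expresses them through $d^2/N$ and $\kappa d/N$. Substitute \eqref{eq:am-param}: $N=2n^2=2a^2m^2$, $d=2a^2m$ and $\kappa=m(2a^2-1)$. This gives
\[
\frac{d^2}{N}=\frac{4a^4m^2}{2a^2m^2}=2a^2,\qquad \frac{\kappa d}{N}=\frac{m(2a^2-1)\,2a^2m}{2a^2m^2}=2a^2-1.
\]
Hence $\FPdim A=1+(2a^2-1)=2a^2$ and $\FPdim B=2+(2a^2-1)=2a^2+1$. Also $\FPdim Y_i=4+2(2a^2-1)=4a^2+2$. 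As a consistency check, the total $1+2a^2+(2a^2+1)+(n^2-1)(4a^2+2)$ should equal $\Delta=4n^2+\kappa d$. Indeed, $\kappa d=2a^2m^2(2a^2-1)=n^2(4a^2-2)$, and both sides reduce to $n^2(4a^2+2)$.

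Finally, the multiplicity statement follows from adjunction:
\[
[F(Z),\one]=[Z,I(\one)]=1,
\]
because $I(\one)$ is multiplicity-free. There is no real obstacle here. The only care needed is to make sure the labelling of $A$ and $B$ matches the characters $\chi_-$ and $\eta$ used later, and to note that the dimension formulas hold for FP dimensions because $\C$ is pseudo-unitary.
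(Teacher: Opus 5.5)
Your proposal is correct and follows essentially the same route as the paper: you specialize Lemma~\ref{lem:codegrees} via \eqref{eq:am-param} and $d_-=-m$, get multiplicity-freeness from the formal-codegree theorem for the commutative ring $K_0(\C)$, and get the unit multiplicity by adjunction. Your explicit dimension computations and consistency check against $\Delta$ are accurate and fill in the substitution the paper leaves implicit.
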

\begin{proof}
This is Lemma~\ref{lem:codegrees} after substituting
$d=2a^2m$ and $m=-d_-$. The formal-codegree theorem gives
multiplicity one because $K_0(\C)$ is commutative, and adjunction gives
the unit multiplicity.
\end{proof}

\begin{proposition}\label{prop:m-at-most-two}
In \eqref{eq:am-param}, $m\in\{1,2\}$.
\end{proposition}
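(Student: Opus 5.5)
The plan is a dimension-counting argument on the summands of $I(\one)$ listed in Lemma~\ref{lem:integral-unit}. The main point is that $\rho$ must occur in $F(I(\one))$, but for $m\geq3$ it is too large to fit inside any of these summands.

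First, by \eqref{eq:induction-formula} (equivalently the first line of \eqref{eq:FI-general} with $g=\one$, or adjunction $[F(I(\one)),\rho]=[I(\one),I(\rho)]$), the multiplicity of $\rho$ in $F(I(\one))$ equals $\kappa$. By \eqref{eq:am-param}, $\kappa=m(2a^2-1)\geq1$, so at least one simple summand $Z$ of $I(\one)$ satisfies $[F(Z),\rho]\geq1$.

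Next, I use that $F$ preserves Frobenius--Perron dimension and that, by Lemma~\ref{lem:integral-unit}, every summand $Z$ satisfies $[F(Z),\one]=1$. If $\rho\subseteq F(Z)$, then
\[
\FPdim(Z)\geq 1+\FPdim(\rho)=1+2a^2m .
\]
I then compare this with the list of dimensions $1,\ 2a^2,\ 2a^2+1,\ 4a^2+2$:
\begin{itemize}
\item The unit object cannot contain $\rho$.
\item $A$ cannot contain $\rho$, since $1+2a^2m>2a^2$.
\item $B$ can contain $\rho$ only if $2a^2m\leq 2a^2$, that is, $m=1$.
\item Each $Y_i$ can contain $\rho$ only if $2a^2m\leq 4a^2+1$, that is, $m\leq 2$ (using $a\geq1$).
\end{itemize}
Hence for $m\geq3$ no summand of $I(\one)$ contains $\rho$. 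This contradicts $[F(I(\one)),\rho]=\kappa>0$, and so $m\in\{1,2\}$.

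I expect no serious obstacle. The only points to check carefully are that $\kappa>0$ in the integral case, which holds because $\kappa=0$ was shown to force $d=n\sqrt2\notin\Z$ in Lemma~\ref{lem:integral-param}, and that the list in Lemma~\ref{lem:integral-unit} exhausts $I(\one)$. The latter holds because it accounts for all $n^2+2$ characters of $K_0(\C)$.
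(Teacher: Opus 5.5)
Your proposal is correct and follows the paper's argument: since $\kappa=m(2a^2-1)>0$, some summand of $I(\one)$ contains both $\one$ and $\rho$. Such a summand has dimension at least $1+2a^2m$, which exceeds the largest available dimension $4a^2+2$ once $m\geq3$. Your separate check of $A$, $B$ and the $Y_i$ is only a more detailed version of the paper's single comparison with that largest dimension.
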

\begin{proof}
The largest dimension of a summand of $I(\one)$ is $4a^2+2$ by
Lemma~\ref{lem:integral-unit}. Since $\kappa>0$, equation
\eqref{eq:FI-general} implies that some summand $Z$ has
$[F(Z),\rho]>0$. It also contains $\one$, and therefore
\[
\FPdim(Z)\geq 1+d=1+2a^2m.
\]
If $m\geq3$, then $1+2a^2m>4a^2+2$ for every $a\geq1$.
This is a contradiction. Thus $m\leq2$.
\end{proof}

\subsubsection{The branch $m=1$}

Here $n=a$, $d=2n^2$, and $\kappa=2n^2-1$. Put $N=2n^2=d$.
The summands of $I(\one)$ have dimensions $1,N,N+1,2N+2$.

\begin{lemma}\label{lem:m1-images}
Let $A,B,Y_i$ be the simple summands named in
Lemma~\ref{lem:integral-unit}. Thus
\[
 \FPdim(A)=N,\qquad \FPdim(B)=N+1,\qquad
 \FPdim(Y_i)=2N+2 .
\]
After relabeling $G$ and indexing the $Y_i$ by
$g\in G\setminus\{\one\}$ as $X_g$, their forgetful images are
\[
F(A)\cong \bigoplus_{g\in G}g\oplus n t,\qquad F(B)\cong \one\oplus \rho,\qquad
F(X_g)\cong \one\oplus g\oplus 2 \rho\quad(g\in G\setminus\{\one\}).
\]
\end{lemma}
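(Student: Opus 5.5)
The plan is to determine the forgetful images by bookkeeping with dimensions and with the multiplicities of $F(I(\one))$. Each summand $Z$ contains $\one$ exactly once, by Lemma~\ref{lem:integral-unit}. By \eqref{eq:FI-general} with $g=\one$ we have
\[
F(I(\one))\cong (n^2+2)\one\oplus\bigoplus_{h\ne\one}2h\oplus n t\oplus\kappa\rho,
\]
where $n^2+2$ is the number of summands, $\kappa=2n^2-1=N-1$, and $d=N$. Write $F(Z)\cong\one\oplus\bigoplus_{h\ne\one}m_h(Z)h\oplus u_Zt\oplus b_Z\rho$, so that $\FPdim Z=1+s'_Z+nu_Z+Nb_Z$, where $s'_Z=\sum_{h\ne\one}m_h(Z)$. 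The constraints are
\[
\sum_Z b_Z=N-1,\qquad \sum_Z u_Z=n,\qquad \sum_Z m_h(Z)=2\ \text{ for each } h\ne\one .
\]

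First I restrict the possible $b_Z$. Since $\FPdim A=N$, we get $b_A=0$, so $A$ is $\one$ plus a $\D$-part of dimension $N-1$. Since $\FPdim B=N+1$, we get $b_B\in\{0,1\}$. Since $\FPdim X=2N+2$, we get $b_X\le 2$. Counting $\rho$-multiplicities, $\sum b_Z=N-1=2n^2-1$ over $B$ and the $n^2-1$ objects $X$. The maximum $1+2(n^2-1)=2n^2-1$ is attained, so equality is forced: $b_B=1$ and $b_X=2$ for every $X$. Hence $F(B)\cong\one\oplus\rho$. Each $F(X)$ is then $\one\oplus 2\rho$ plus a $\D$-part of dimension $1$, which is a single invertible $g_X\ne\one$, because $t$ has dimension $n\ge2$ and $\one$ occurs only once. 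The whole of $t^{\oplus n}$ therefore goes to $A$. The remaining invertible multiplicities in $A$ add up to $N-1-n^2=n^2-1$.

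Next I distribute the invertibles. Each $h\ne\one$ has total multiplicity $2$, giving a total of $2(n^2-1)$. Of this, $n^2-1$ goes to the $X$'s and $n^2-1$ goes to $A$. To show that $A$ contains each $h\ne\one$ exactly once, I would use the Gram entry $M_{\one,\one}$: by \eqref{eq:induction}, $[I(\one),I(\one)]=[F(I(\one)),\one]=n^2+2$, which matches multiplicity-freeness but gives nothing new. The better tool is adjunction applied to $I(h)$ together with invariance of $A$ under tensoring with invertibles. Concretely, I would use the $S$-matrix character identity \eqref{eq:central-character} for the summands $Y_\psi$ attached to the nontrivial characters $\psi$ of $G$:
\[
\Ss_{Y_\psi,A}=\dim(Y_\psi)\,\psi(F(A))/\dim Y_\psi\cdot\ldots
\]
Symmetry of $\Ss$ gives $\Ss_{A,Y_\psi}=\dim A\cdot\chi_A(F(Y_\psi))$. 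Here $A$ corresponds to $\chi_-$, whose values are $1$ on $G$, $n$ on $t$ and $-m=-1$ on $\rho$. Since $F(Y_\psi)\cong\one\oplus g\oplus2\rho$, this gives
\[
\chi_-(F(Y_\psi))=1+1-2=0 .
\]
Hence $\psi(F(A))=0$ for every nontrivial $\psi$. Using $\psi(t)=0$, this says that the invertible part $\sum_x m_x(A)x$ of $F(A)$ is annihilated by every nontrivial character, so it is a constant multiple of the regular element $\sum_{x\in G}x$. Because $m_\one(A)=1$, the constant is $1$, and $F(A)\cong\bigoplus_g g\oplus nt$.

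Finally, the $X$'s consume each $h\ne\one$ exactly once, so $h\mapsto X_h$ is a bijection onto $G\setminus\{\one\}$. The labeling is then as stated. The main obstacle is justifying the character step cleanly: one has to match the correct summand with $\chi_-$ (it is $A$, since $\dim A=d^2/N=N$) and apply the symmetry $\Ss_{A,Y}=\Ss_{Y,A}$ with $\Ss_{Y,A}=\dim Y\,\psi(F(A))$. Since $\dim Y\ne0$, this gives the vanishing.
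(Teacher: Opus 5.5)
Your proof is correct. The first half agrees with the paper's argument: you force the $\rho$-multiplicities $0,1,2$ via the count $1+2(n^2-1)=N-1$, read off $F(B)\cong\one\oplus\rho$ and $F(Y_i)\cong\one\oplus g_i\oplus 2\rho$, and place all of $nt$ in $F(A)$. The routes differ in the step showing that each $g\in G$ occurs exactly once in $F(A)$.

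\textbf{The paper's route.} It uses adjunction and positivity: $n=[F(I(g)),t]=\sum_Z[F(Z),g][F(Z),t]\ge a_g n$ gives $a_g\le 1$, and then $\sum_g a_g=n^2$ forces $a_g=1$.

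\textbf{Your route.} You use symmetry of $\Ss$ together with \eqref{eq:central-character}. The summand $A$ is matched with $\chi_-$, where $\chi_-(\rho)=d_-=-1$. Since $\chi_-(\one\oplus g\oplus 2\rho)=0$, one gets $\Ss_{A,Y_\psi}=0$, hence $\eta_\psi(F(A))=0$ for every nontrivial $\psi$. So the $G$-part of $F(A)$ is a multiple of the regular element, and $[F(A),\one]=1$ fixes the multiple. If $G$ has non-real characters, a possible complex conjugation in the character identity does no harm, because $\psi\mapsto\bar\psi$ permutes the nontrivial characters.

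\textbf{Comparison.} The paper's argument is more elementary: it needs only one Gram entry and nonnegativity, with no modular data and no matching of central summands to characters. On the other hand, it depends on having already located all copies of $t$ inside $A$. Yours needs the dimension-based identifications $A\leftrightarrow\chi_-$ and $Y_i\leftrightarrow\eta_\psi$, and it uses $m=1$ through $\chi_-(\rho)=-1$. In exchange, it gives an exact vanishing statement and does not use where $t$ sits.

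Finally, remove from the write-up the garbled display ending in ``$\cdot\ldots$'' and the abandoned remark about $[I(\one),I(\one)]$.
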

\begin{proof}
By Lemma~\ref{lem:integral-unit}, every image contains $\one$ exactly once.
The dimensions of $A,B,Y_i$ and the fact that the total $\rho$
coefficient in $F(I(\one))$ is $\kappa=N-1$ give the upper bounds
$0,1,2$ for the $\rho$ multiplicities in $A,B,Y_i$, respectively.
Their sum is at most
\[
 0+1+2(n^2-1)=N-1,
\]
which is the total coefficient. Thus all three bounds are attained.
The remaining dimension gives $F(B)\cong \one\oplus \rho$ and shows that each
$Y_i$ has the form $\one\oplus u_i\oplus 2 \rho$ with $u_i\in G$.
The total $t$ coefficient is $n$, so all copies of $t$ occur in $F(A)$.
Write
\[
 F(A)\cong \bigoplus_{g\in G}a_g g\oplus n t .
\]
Since $\FPdim(A)=N=2n^2$, dimension gives
$\sum_{g\in G}a_g=n^2$. Using adjunction for $g$ and $t$,
\[
 n=[F(I(g)),t]
   =\sum_Z[F(Z),g][F(Z),t]\geq a_g n,
\]
so $a_g\leq1$ for every $g$. Hence $a_g=1$ for all $g\in G$.
Finally, the coefficient of every nonidentity $g$ in $F(I(\one))$ is two,
whereas $F(A)$ contributes one. Thus the $Y_i$ are indexed exactly once
by the nonidentity elements of $G$, giving the notation $X_g$ and the
displayed formulas.
\end{proof}

We use the convention for the image of a tensor functor from
Section~\ref{general-NTYC}.

\begin{lemma}\label{lem:ratio}
Let $L:\mathcal U\to\mathcal V$ be a dominant tensor functor with
right adjoint $R$. Then
\begin{equation}\label{eq:ratio}
\frac{\FPdim(\mathcal U)}{\FPdim(\mathcal V)}
=\FPdim(R(\one))=\sum_{X\in\Irr(\mathcal U)}[L(X),\one]\FPdim(X).
\end{equation}
If $R(\one)\cong \one$, then $L$ is fully faithful. For a fusion subcategory
$\mathcal U\subset\ZC$ and $L=F|_{\mathcal U}$, $R(\one)$ is precisely
the part of $I(\one)$ supported in $\mathcal U$.
\end{lemma}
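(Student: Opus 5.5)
The plan is to deduce the ratio formula from the standard Frobenius--Perron theory of dominant tensor functors in \cite{etingof2005fusion}, with adjunction doing most of the work. First I would compute $R(\one)$ by adjunction. For every simple $X\in\mathcal U$,
\[
[X,R(\one)]=\dim\Hom_{\mathcal U}(X,R(\one))=\dim\Hom_{\mathcal V}(L(X),\one)=[L(X),\one].
\]
Since $\mathcal U$ is semisimple, this gives $R(\one)\cong\bigoplus_{X}[L(X),\one]\,X$. Taking $\FPdim$ yields the second equality in \eqref{eq:ratio}.

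For the first equality, I would invoke the known result that for a dominant (surjective) tensor functor between fusion categories one has $\FPdim(R(\one))=\FPdim(\mathcal U)/\FPdim(\mathcal V)$; see \cite[Lemma~3.4 or Prop.~8.? ]{etingof2005fusion} or EGNO, Lemma~6.2.4. If a self-contained argument is preferred, I would proceed as follows. Consider the regular object $\mathsf R_{\mathcal V}=\bigoplus_{Y}\FPdim(Y)Y$ of $\mathcal V$, which is characterized up to scalar as the unique nonzero element of $K_0(\mathcal V)\otimes\mathbb R$ satisfying $Y\mathsf R_{\mathcal V}=\FPdim(Y)\mathsf R_{\mathcal V}$. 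The class $R(\mathsf R_{\mathcal V})$ satisfies the analogous eigen-property for $\mathcal U$: by the projection formula $R(L(X)\otimes W)\cong X\otimes R(W)$, together with $L(X)\mathsf R_{\mathcal V}=\FPdim(X)\mathsf R_{\mathcal V}$, we get $X\otimes R(\mathsf R_{\mathcal V})=\FPdim(X)R(\mathsf R_{\mathcal V})$. Its coefficients are nonnegative, and they are nonzero because $L$ is dominant. Hence $R(\mathsf R_{\mathcal V})=\lambda\mathsf R_{\mathcal U}$ for some $\lambda>0$. Comparing the coefficient of $\one$ gives
\[
\lambda=[\one,R(\mathsf R_{\mathcal V})]=\sum_Y\FPdim(Y)[L(\one),Y]=1.
\]
Then I would compute $\FPdim R(\one)$ via the pairing $[R(\one),\mathsf R_{\mathcal U}]$ against $\FPdim$: we have $\sum_X[R(\one),X]\FPdim X=\FPdim R(\one)$. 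On the other hand, by adjunction,
\[
[L(\mathsf R_{\mathcal U}),\one]=[\mathsf R_{\mathcal U},R(\one)]=\FPdim R(\one),
\]
while $L(\mathsf R_{\mathcal U})$ is a multiple $\mu\mathsf R_{\mathcal V}$. I would determine $\mu$ by applying $\FPdim$, which $L$ preserves: this gives $\mu=\FPdim\mathcal U/\FPdim\mathcal V$. The unit coefficient of $\mu\mathsf R_{\mathcal V}$ is $\mu$. The main obstacle is justifying that $L(\mathsf R_{\mathcal U})$ is proportional to $\mathsf R_{\mathcal V}$. This again follows from the Perron--Frobenius uniqueness, since $Y\otimes L(\mathsf R_{\mathcal U})$ is handled by dominance: each $Y$ is a summand of some $L(X)$, and positivity plus uniqueness of the positive eigenvector of the irreducible matrix $\sum_Y N_Y$ give the claim.

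If $R(\one)\cong\one$, then $[L(X),\one]=0$ for simple $X\neq\one$. Hence $[L(X),L(X')]=[L(X\otimes X'^*),\one]=\delta_{X,X'}$ for simples $X,X'$, so $L$ is fully faithful. Finally, for $\mathcal U\subset\ZC$ and $L=F|_{\mathcal U}$, the right adjoint is $R(\one)=\bigoplus_{Z\in\Irr\mathcal U}[F(Z),\one]Z$, which by \eqref{eq:induction} is exactly the part of $I(\one)$ supported on $\Irr(\mathcal U)$.
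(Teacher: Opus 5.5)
Your proposal is correct and follows the same route as the paper. The key input is $L(\mathsf R_{\mathcal U})=\frac{\FPdim(\mathcal U)}{\FPdim(\mathcal V)}\mathsf R_{\mathcal V}$, which the paper simply cites as \cite[Proposition~3.3.13]{EGNO}; you then take the coefficient of $\one$ and use adjunction, and you obtain full faithfulness and the $\ZC$ statement by adjunction, exactly as the paper does.

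Two small points. Your preliminary identity $R(\mathsf R_{\mathcal V})=\mathsf R_{\mathcal U}$ is true but is never used, and it holds even without dominance. In your Perron--Frobenius sketch, the right matrix is left multiplication by $L(\bigoplus_{X}X)$ rather than $\sum_Y N_Y$: that matrix is entrywise positive by dominance and has $L(\mathsf R_{\mathcal U})$ as a nonnegative eigenvector.
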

\begin{proof}
The image of the canonical regular element of $K_0(\mathcal U)$ is
$\FPdim(\mathcal U)/\FPdim(\mathcal V)$ times the regular element
of $K_0(\mathcal V)$ \cite[Proposition~3.3.13]{EGNO}.
Taking the coefficient of $\one$ and using adjunction proves
\eqref{eq:ratio}. If $R(\one)\cong \one$, rigidity and adjunction identify
$\operatorname{Hom}(L(X),L(Y))$ with $\operatorname{Hom}(X,Y)$.
The final assertion follows from $[R(\one),Z]=[F(Z),\one]=[I(\one),Z]$.
\end{proof}

\begin{proposition}\label{prop:m1}
If $m=1$, then $n=2$, $G\cong C_2\times C_2$, $d=8$, and
$\kappa=7$.
\end{proposition}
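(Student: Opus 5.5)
Since $m=1$ gives $n=a$, $d=2n^2$ and $\kappa=2n^2-1$, it suffices to prove $n=2$; Lemma~\ref{lem:order-bound} then gives $G\cong C_2\times C_2$, and $d=8$, $\kappa=7$ follow. The introduction indicates this branch runs through a Tannakian subcategory and a group with a normal subgroup whose nonidentity elements are all conjugate, finishing with the braided Tambara--Yamagami restriction and arithmetic. I would follow that route.

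First, consider the simples $\one$, $B$ and $X_g$ ($g\ne\one$) of Lemma~\ref{lem:m1-images}. Their forgetful images avoid $t$ and lie in the fusion subcategory $\mathcal E\subseteq\C$ generated by $G$ and $\rho$. Up to $\rho$ this is everything except $t$, so it is closed under tensor products only when combined with $A$. I would instead let $\mathcal T\subseteq\ZC$ be the fusion subcategory generated by $I(\one)$, and show it is Tannakian. All summands of $I(\one)$ have twist $1$ by \eqref{eq:central-character}. Moreover $I(\one)$ is the commutative (Lagrangian) algebra, and $\mathcal T$ is symmetric because the $S$-matrix entries between summands of $I(\one)$ are computed by \eqref{eq:central-character} as $\dim(Z_\chi)\chi(F(Z_{\chi'}))$; I would check these equal $\dim Z_\chi\dim Z_{\chi'}$ using the explicit images. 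With all twists $1$ and integral dimensions, Deligne's theorem gives $\mathcal T\simeq\Rep(H)$. I would then apply Lemma~\ref{lem:ratio} to $F|_{\mathcal T}$. Its image contains $\one,\rho,g,t$ (the last through $A$), so $F|_{\mathcal T}$ is dominant onto $\C$. Hence $\FPdim\mathcal T=\FPdim(\C)\cdot\FPdim(I(\one))$, or rather $\mathcal T$ is the whole category generated by $I(\one)$, and $|H|=\FPdim\mathcal T$. Computing this gives $|H|=\FPdim(\C)=4n^2+\kappa d=N(N+1)$ with $N=2n^2$. The point is that $\C$ is then group-theoretical of the form $\Rep(H)$: the algebra $I(\one)$ restricted to $\mathcal T$ identifies $\C$ with $\Rep(H)$, or with its dual by $H$-equivariantization.

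Second, read off group theory. The irreducible characters of $H$ have degrees $1$ ($n^2$ times), $n$, and $2n^2$. Restriction to the normal subgroup $K$ corresponding to the Tambara--Yamagami part (the kernel of $\rho$'s complement, i.e.\ the subgroup acting trivially on $\D$-objects) gives $|H/K|=2n^2$. Then $\rho$, of degree $N=|H:K|\cdot$\,(something), is induced from a single $K$-character. The standard criterion then says that all nonidentity elements of $K$ are conjugate, so $|K|-1$ divides $|H|$ and $K$ is elementary abelian of prime-power order. From $\FPdim$ counting, $|K|=N+1=2n^2+1$, so $2n^2+1=p^e$ and $H=K\rtimes H_0$ with $H_0$ acting transitively on $K\setminus\{1\}$ with $\Rep(H_0)\simeq\D$ of order $2n^2$.

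Third, and this is the main obstacle, use the Tambara--Yamagami structure on $\D=\Rep(H_0)$ to force $n=2$. A group with $\Rep(H_0)$ of Tambara--Yamagami type is an extraspecial-like group of order $2n^2$ with one irreducible character of degree $n$. It acts fixed-point-freely transitively on the $2n^2$ nonzero vectors of $\FF_p^e$, so it is a Frobenius complement. Frobenius complements have cyclic or generalized quaternion Sylow $2$-subgroups and all abelian subgroups cyclic. But $H_0/[H_0,H_0]\cong G$ has order $n^2$. Combining this with Lemma~\ref{lem:order-bound}, which gives exponent at most $n$, the abelianization must be cyclic-compatible. This forces $G\cong C_2\times C_2$ with $H_0=Q_8$, hence $n=2$ and $2n^2+1=9=3^2$, which is consistent with $\PSU(3,2)\cong C_3^2\rtimes Q_8$. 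The delicate part is making the symmetric-subcategory identification and the group-theoretic reconstruction rigorous. If that fails, I would fall back on the paper's hint of a braided Tambara--Yamagami restriction, i.e.\ the Siehler classification, which requires $G$ to be an elementary abelian $2$-group, combined with the arithmetic of $2n^2+1$ being a prime power.
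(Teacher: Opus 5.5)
Your first step fails, and the rest of the proposal depends on it. The fusion subcategory $\mathcal T$ generated by the summands of $I(\one)$ is not symmetric. With $m=1$ one has $d=N$ and $d_-=-1$, so \eqref{eq:central-character} gives
$\Ss_{A,B}=\dim(A)\,\chi_-(\one\oplus\rho)=N(1-1)=0$
and
$\Ss_{B,B}=(N+1)\,\eta(\one\oplus\rho)=N+1\neq(N+1)^2$.
In particular $B$ does not even centralize itself. Your own use of Lemma~\ref{lem:ratio} identifies $\mathcal T$. The right adjoint of $F|_{\mathcal T}$ sends $\one$ to all of $I(\one)$, and the image is all of $\C$. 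Hence $\FPdim\mathcal T=\FPdim(\C)^2$, i.e.\ $\mathcal T=\ZC$, which is nondegenerate. The claimed identification of $\C$ with $\Rep(H)$ is also unjustified. So you never actually obtain a group with character degrees $1$ ($n^2$ times), $n$, $N$.

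The missing idea is to use only $A$, the summand attached to $\chi_-$, with $F(A)\cong\bigoplus_g g\oplus nt$. Showing that $A$ generates a fusion subcategory with simples exactly $\widetilde g$ ($g\in G$), $T$, $A$ is where the work lies.
\begin{enumerate}
\item Since $\one$ is the only summand of $I(\one)$ with pointed image, Lemma~\ref{lem:ratio} makes $F$ fully faithful on $F^{-1}(\C_{\mathrm{pt}})$. Hence central lifts of invertibles are unique.
\item Write $A^2\cong\one\oplus(N-1)A\oplus W$. The identity $[I(g),I(t)]=n$ is already exhausted by $A$, so no other central simple has both some $g$ and $t$ in its image. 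This splits $W$ into lifts $\widetilde g$ and a unique $T$ with $F(T)\cong t$.
\item $\widetilde gA\cong A$, because it is a dimension-$N$ summand of $I(\one)$.
\item $\Ss_{A,A}=N\chi_-(F(A))=N^2$, and balancing on $A^2$ forces $\theta_{\widetilde g}=\theta_T=1$.
\end{enumerate}
This yields a Tannakian $\Rep(K)$ with $|K|=N(N+1)$ and the fusion ring of $\C$. From there your second step essentially matches the paper, which makes your ``standard criterion'' precise via $\operatorname{Res}_VA\oplus\one_V\cong\CC[V]$.

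Your third step also needs a real input. Lemma~\ref{lem:order-bound} only bounds element orders by $n$; it does not make $K/V$ a $2$-group. And ``cyclic-compatible'' is not an argument: $Q_8$ is itself a Frobenius complement with noncyclic abelianization. The needed input is that $\{\widetilde g\}\cup\{T\}$ is a braided Tambara--Yamagami category, so Siehler gives $G$ elementary abelian of order $2^{2s}$. With that, your Frobenius-complement finish is valid. $K/V$ is a $2$-group acting regularly on $V\setminus\{1\}$, hence cyclic or generalized quaternion. Its derived subgroup has order $2$, which forces $Q_8$ and $n=2$. This is a genuine alternative to the paper's arithmetic $2^{2s+1}+1=p^r\Rightarrow p=3$, $(3^u-1)(3^u+1)=2^{2s+1}$.
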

\begin{proof}
Let $\C_{\mathrm{pt}}$ be the pointed subcategory of $\C$ and set
$\mathcal P=F^{-1}(\C_{\mathrm{pt}})\subset\ZC$. By
Lemma~\ref{lem:m1-images}, the only summand of $I(\one)$ whose
forgetful image is pointed is $\one$. Thus the right adjoint of
$F|_{\mathcal P}$ sends $\one$ to $\one$, and
Lemma~\ref{lem:ratio} makes $F|_{\mathcal P}$ fully faithful.
In particular, the forgetful image of a simple object of $\mathcal P$
is a single invertible object, and any such invertible has at most
one central lift in $\mathcal P$.

The unique dimension-$N$ summand $A$ of $I(\one)$ is self-dual.
Its forgetful image is the regular object of $\D$, so
$F(A)^2\cong N F(A)$. Among the summands of $I(\one)$, only
$\one$ and $A$ can occur in $A^2$, since every other summand has a
$\rho$ constituent after applying $F$. Adjunction and
$[F(A^2),\one]=N$ therefore give
\begin{equation}\label{eq:m1-A-square-initial}
 A^2\cong\one\oplus(N-1)A\oplus W,
 \qquad F(W)\cong\bigoplus_{g\ne\one}g\oplus nt.
\end{equation}
For a central simple $Y$, put $a_g(Y)=[F(Y),g]$ and
$b(Y)=[F(Y),t]$. The induction-adjunction formula gives
\[
 n=[I(g),I(t)]=\sum_Y a_g(Y)b(Y)\qquad(g\in G).
\]
The summand $A$ contributes $1\cdot n=n$, so nonnegativity implies
\begin{equation}\label{eq:m1-no-mixed}
 a_g(Y)b(Y)=0\qquad(g\in G,\;Y\ne A).
\end{equation}
Every simple constituent of $W$ is consequently supported, after
applying $F$, either on $G$ or on $t$ alone. Full faithfulness on
$\mathcal P$ and the coefficients of $F(W)$ show that the pointed
constituents, together with $\one$, give unique central lifts
$\widetilde g$ of all $g\in G$.

Let $T$ be a simple constituent of $W$ with $F(T)\cong bt$,
where $b>0$. The object $T\otimes T^*$ is in $\mathcal P$, and
\[
 F(T\otimes T^*)\cong\bigoplus_{g\in G}b^2g,
 \qquad
 [T\otimes T^*,\widetilde g]
 =[T,\widetilde g\otimes T]\le1.
\]
Uniqueness of the lifts therefore forces $b=1$. If $T'$ is another
simple with $F(T')\cong t$, then $T\otimes T'^*$ lies in
$\mathcal P$ and its forgetful image contains $\one$ once. Hence
$[T,T']=[T\otimes T'^*,\one]=1$, so $T\cong T'$.
It follows that $T\cong T^*$, $\widetilde gT\cong T$, and
\[
 T^2\cong\bigoplus_{g\in G}\widetilde g,
 \qquad
 \widetilde g\widetilde h\cong\widetilde{gh}.
\]
The coefficient of $t$ in $F(W)$ gives $[W,T]=n$; reciprocity
and dimensions now yield
\[
 [T A,A]=[A^2,T]=n,\qquad TA\cong nA.
\]
Thus the subcategory on $\{\widetilde g:g\in G\}\cup\{T\}$ is a
braided Tambara--Yamagami category. By
\cite[Theorem~1.2(1)]{Siehler}, $G$ is elementary abelian of
exponent two. Since $|G|=n^2$ and $n\ge2$, write $n=2^s$ with $s\ge1$.

We next establish the closure and twists needed for the Tannakian
argument. The simple $\widetilde gA$ has dimension $N$, and
$F(\widetilde gA)$ contains $\one$. It is therefore a summand of
$I(\one)$, whose unique dimension-$N$ summand is $A$. Hence
$\widetilde gA\cong A$. Equation~\eqref{eq:m1-A-square-initial}
becomes
\begin{equation}\label{eq:m1-A-square}
 A^2\cong\one\oplus(N-1)A\oplus
 \bigoplus_{g\ne\one}\widetilde g\oplus nT.
\end{equation}
Together with the preceding product formulas and self-duality of
$A,T$, this proves that
\[
 \{\widetilde g:g\in G\}\cup\{T,A\}
\]
is the complete simple-object list of a fusion subcategory
$\mathcal E\subset\ZC$.

The character associated with $A$ is $\chi_-$, with values
$\chi_-(g)=1$, $\chi_-(t)=n$, and $\chi_-(\rho)=-1$.
The induction-character identity \eqref{eq:central-character} gives
\[
 \Ss_{A,A}
 =\dim(A)\chi_-(F(A))=N^2.
\]
As $A\subset I(\one)$, its twist is $\theta_A=1$.
Balancing applied to \eqref{eq:m1-A-square} consequently gives
\[
 N^2=1+(N-1)N+
 \sum_{g\ne\one}\theta_{\widetilde g}+n^2\theta_T,
\]
or
\[
 \sum_{g\ne\one}\theta_{\widetilde g}+n^2\theta_T
 =N-1=(n^2-1)+n^2.
\]
All twists have absolute value one. Equality with the sum of the
positive weights forces $\theta_{\widetilde g}=	\theta_T=1$.
Thus every simple of $\mathcal E$ has trivial twist. Balancing
implies that $\mathcal E$ is symmetric. Its inherited positive
spherical structure, together with the trivial twist, is the
canonical symmetric structure; hence \cite[Theorem~9.9.22]{EGNO}
makes $\mathcal E$ Tannakian.

Write $\mathcal E\simeq\Rep(K)$. The complete simple list gives
\[
 |K|=|G|+n^2+N^2=N(N+1).
\]
Its Tambara--Yamagami subcategory is $\Rep(K/V)$ for a normal
subgroup $V\lhd K$, with $|K/V|=N$ and $|V|=N+1$.
The only irreducible representation not inflated from $K/V$ is
$A$, of dimension $N$. Restriction of the regular representation gives
\[
 \operatorname{Res}^K_V\CC[K]\cong N\CC[V].
\]
The inflated part of $\CC[K]$ restricts to $N\one_V$, while $A$
occurs with multiplicity $N$. Cancelling that multiplicity yields
\[
 \operatorname{Res}^K_VA\oplus\one_V\cong\CC[V].
\]
Consequently $\chi_A(v)=-1$ for every $v\in V\setminus\{1\}$.
An irreducible representation inflated from $K/V$ has character
value equal to its dimension on all of $V$ (in particular $T$
has value $n$). Thus every irreducible character of $K$ is constant
on $V\setminus\{1\}$, so character separation makes that set a
single $K$-conjugacy class.

All its elements have a common prime order $p$: if the common
order were composite, a nonidentity proper power would have a
different order. Cauchy's theorem shows that $V$ is a $p$-group.
Its nontrivial center $Z(V)$ is characteristic in $V$, and hence
normal in $K$. Transitivity on $V\setminus\{1\}$ forces
$Z(V)=V$. Therefore $V$ is abelian of exponent $p$, so
$V\cong(C_p)^r$ and $N+1=p^r$.

We obtain $2^{2s+1}+1=p^r$. Reduction modulo three gives $p=3$;
reduction modulo eight gives $r=2u$. Hence
\[
 (3^u-1)(3^u+1)=2^{2s+1}.
\]
The two factors are positive powers of two differing by two, so
they are $2$ and $4$. Therefore $u=s=1$ and $n=2$.
The group $G$ has order four and exponent two, so
$G\cong C_2\times C_2$, and $d=N=8$, $\kappa=N-1=7$.
\end{proof}

\subsubsection{The branch $m=2$}

Write $n=2a$ and $q=n^2/2=2a^2$. Then
\begin{equation}\label{eq:m2-params}
d=2q,\qquad \kappa=2q-2,\qquad
\FPdim(\D)=4q,\qquad \Delta=4q(q+1).
\end{equation}

\begin{lemma}\label{lem:m2-distinguished}
Let $A,B,Y_i$ denote the objects in
Lemma~\ref{lem:integral-unit}. In the $m=2$ branch,
\[
 \FPdim(A)=q,\qquad \FPdim(B)=q+1,\qquad
 \FPdim(Y_i)=2q+2.
\]
Exactly one of the $2q-1$ summands $Y_i$ of dimension $2q+2$
has a forgetful image without
$\rho$. Denote this distinguished central object by $X_0$. Then
$F(X_0)$ contains $t$.
\end{lemma}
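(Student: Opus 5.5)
The plan is a multiplicity count in $F(I(\one))$, following the proof of Lemma~\ref{lem:m1-images}. The dimensions come from Lemma~\ref{lem:integral-unit} with $m=2$, $n=2a$ and $q=2a^2$. They are $\FPdim(A)=2a^2=q$, $\FPdim(B)=q+1$ and $\FPdim(Y_i)=4a^2+2=2q+2$. The number of $Y_i$ is $n^2-1=2q-1$.

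\emph{Step 1: bound the $\rho$-multiplicities.} Every image contains $\one$ exactly once, and $\FPdim(\rho)=d=2q$. Hence the $\rho$-multiplicity is $0$ in $F(A)$ (since $1+2q>q$) and $0$ in $F(B)$ (since $1+2q>q+1$). In each $F(Y_i)$ it is at most $1$, since $1+4q>2q+2$ for $q\geq2$, which holds because $q=2a^2\geq2$.

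\emph{Step 2: count.} By \eqref{eq:FI-general}, the total $\rho$-coefficient of $F(I(\one))$ is $\kappa=2q-2$. This must be distributed among the $2q-1$ objects $Y_i$, each contributing $0$ or $1$. Hence exactly $2q-2$ of them contain $\rho$ once and exactly one, $X_0$, has no $\rho$. This gives the first claim.

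\emph{Step 3: show $F(X_0)$ contains $t$.} I would argue by contradiction. If $F(X_0)$ has no $t$, it is pointed of dimension $2q+2=n^2+2$ and contains $\one$ once. Each nonidentity $g$ appears with total multiplicity $2$ in $F(I(\one))$. The constituents of $X_0$ other than $\one$ therefore number $n^2+1$ counted with multiplicity, among $n^2-1$ nonidentity elements, each taken at most twice.

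To rule this out, I would compare with the other summands. For each $Y_i$ containing $\rho$, the remaining dimension is $2q+2-1-2q=1$, so $F(Y_i)\cong\one\oplus g_i\oplus\rho$ with $g_i$ invertible. Here $g_i\neq\one$, because $\one$ occurs only once. These $2q-2$ objects use $2q-2$ nonidentity invertible occurrences.

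Next, $F(A)$ has dimension $q$, contains $\one$ once and has no $\rho$. It can hold copies of $t$ (each of dimension $n$) and invertibles. Likewise $F(B)$ has dimension $q+1$, with $\one$ plus $q$ more from $G$ and $t$.

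The totals available in $F(I(\one))$ are: nonidentity invertible occurrences $2(n^2-1)=4q-2$, and $t$-multiplicity $n$. Summing the integral dimensions gives
\[
q-1+q+(2q-2)+(2q+1)=6q-2=(4q-2)+2q,
\]
which matches the dimension of these totals, $4q-2+n\cdot n=4q-2+2q$. So the bookkeeping is consistent, and the pure count does not by itself force $t$ into $X_0$.

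This is the main obstacle. To force $t$ into $X_0$, I would use the Gram entries $[I(g),I(t)]=n$ together with $[I(t),I(t)]=3n^2$ and $[I(\one),I(t)]=n$. The last entry says that the total $t$-multiplicity $n$ among the low objects is split between $A$, $B$ and $X_0$.

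I would first try the Ostrik character identity \eqref{eq:central-character} with symmetry of $\Ss$, as in \eqref{r8:eq:b-lambda}. For the summand $Y_\psi$ attached to a nontrivial character $\psi$ of $G$ and the summand $B$ attached to $\eta$, compute $\Ss_{B,Y_\psi}$ both ways. The value $\eta(F(Y_\psi))$ equals the invertible count minus $n$ times the $t$-multiplicity. This is compared with $\dim(Y_\psi)\psi(F(B))$. The resulting relation yields the $t$-multiplicity of each $Y_\psi$ explicitly.

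For $Y_\psi$ with image $\one\oplus g\oplus\rho$, the $\eta$-value is $2+d\cdot 0$. For $X_0$, that value is $(2q+2)-2nb$, where $b$ is its $t$-multiplicity. Forcing $b=0$ should then contradict the relation obtained from the $\psi$-side, after using $\sum_\psi\psi=-1$ on nontrivial elements. I expect this symmetric-$\Ss$ computation to be the decisive step.
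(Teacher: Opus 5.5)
Your Steps 1 and 2 are correct and coincide with the paper's argument. The gap is in Step 3. You never actually prove that $F(X_0)$ contains $t$: the $\Ss$-computation is left as something you ``expect'' to work. Moreover, the reason you give for abandoning counting is mistaken. Your bookkeeping only checks that aggregate dimensions balance. It ignores that $t$ enters individual objects in whole copies of dimension $n$, and that $X_0$'s spare room cannot absorb $t$ under the contradiction hypothesis.

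If $F(X_0)$ contained no $t$, all $n$ copies of $t$ in $F(I(\one))$ would have to lie in $F(A)\oplus F(B)$. This is because the $2q-2$ summands with $F(Y_i)\cong\one\oplus g_i\oplus\rho$ have no room for $t$. Apart from their two unit constituents, $F(A)$ and $F(B)$ have total dimension $(q-1)+q=2q-1$. But $n$ copies of $t$ have dimension $n^2=2q$, a contradiction. This is exactly the paper's proof. There it is phrased as $[F(A),t]\le a-1$ and $[F(B),t]\le a$, which follows from $n=2a$, $q=2a^2$ and unit multiplicity one. Hence $[F(X_0),t]\ge 2a-(2a-1)=1$.

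The $\Ss$-symmetry route you sketch does succeed if carried out, so it is a valid but heavier alternative. Symmetry of $\Ss$ together with \eqref{eq:central-character}, applied to $B$ and $Y_\psi$, gives $\eta(F(Y_\psi))=2\psi(F(B))$ for each nontrivial $\psi$. Write $s_B$ for the number of invertible constituents of $F(B)$ and $b=[F(X_0),t]$. Summing over $\psi\neq 1$ and using $\sum_{\text{all }\psi}\psi(F(B))=n^2[F(B),\one]=2q$ gives
\[
4(q-1)+(2q+2-2nb)=2(2q-s_B),\qquad\text{i.e.}\qquad s_B=nb+1-q .
\]
Since $s_B\ge1$, this forces $b\ge q/n=a\ge1$. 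This yields the sharper bound $[F(X_0),t]\ge a$, but it uses modular data where a two-line dimension count suffices.
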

\begin{proof}
The summands of dimensions $q$ and $q+1$ cannot contain $\rho$,
and each $2q+2$ summand contains at most one copy of $\rho$.
Since the total $\rho$ multiplicity is $2q-2$, exactly one, $X_0$,
contains no $\rho$. For the other summands, dimension gives
$F(X)\cong \one\oplus g\oplus \rho$ with $g\ne \one$.
Writing $q=2a^2$ and $n=2a$, the unit multiplicity bounds give
\[
[F(A),t]\le a-1,\qquad [F(B),t]\le a.
\]
Since $[F(I(\one)),t]=2a$, one has $[F(X_0),t]\ge1$.
\end{proof}

\begin{proposition}\label{prop:m2-centralizer}
Let $\mathcal E_2$ be the full fusion subcategory of $\ZC$ generated by
the simple objects whose forgetful images lie in $\D$:
\[
 \mathcal E_2=F^{-1}(\D).
\]
The prime denotes the M\"uger centralizer inside the braided category
$\ZC$; explicitly,
\[
 \mathcal E_2'
 =\left\{Z\in\ZC:
 c_{W,Z}\circ c_{Z,W}=\operatorname{id}_{Z\otimes W}
 \text{ for every }W\in\mathcal E_2\right\},
\]
where $c$ is the braiding of $\ZC$. Put
$\mathcal J=\mathcal E_2'$. Then
\[
\FPdim(\mathcal E_2)=4\Delta,\qquad
\FPdim(\mathcal J)=\Delta/4=q(q+1),
\]
and $\mathcal J$ is Tannakian. Moreover,
\[
\mathcal J\simeq\Rep(K),\quad |K|=q(q+1),\qquad
F(\mathcal J)\simeq\Rep(H),\quad |H|=q,
\]
where $H\le K$ is abelian, $[K:H]=q+1$, and its character
group $\widehat H$ identifies with a subgroup of $G$. Under these
equivalences, $F|_{\mathcal J}$ is the restriction functor
$\operatorname{Res}^K_H$, and its right-adjoint unit algebra is
$\operatorname{Fun}(K/H)\cong\one\oplus A$.
\end{proposition}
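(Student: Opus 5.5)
The plan is to get $\FPdim(\mathcal E_2)$ from Lemma~\ref{lem:ratio}, then $\FPdim(\mathcal J)$ from the double-centralizer theorem in the modular category $\ZC$, and then to identify $\mathcal J$ through its unit algebra. The main obstacle is showing that $\mathcal J$ is Tannakian rather than only super-Tannakian (the fermion step in the second paragraph). The identification of the pair $(K,H)$ in the last paragraph is the least checked step.

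\emph{Dimensions.} First, $F|_{\mathcal E_2}\colon\mathcal E_2\to\D$ is dominant. Its image is a fusion subcategory of $\D$ containing $t$, because $X_0\in\mathcal E_2$ and $F(X_0)\ni t$ by Lemma~\ref{lem:m2-distinguished}. Since $t\otimes t\cong\bigoplus_{g\in G} g$, this image is all of $\D$. The right adjoint sends $\one$ to the part of $I(\one)$ lying in $\mathcal E_2$. By Lemma~\ref{lem:m2-distinguished} this part is $\one\oplus A\oplus B\oplus X_0$, since every other $Y_i$ contains $\rho$. It has dimension $1+q+(q+1)+(2q+2)=4(q+1)$. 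Then \eqref{eq:ratio} gives $\FPdim(\mathcal E_2)=4q\cdot4(q+1)=4\Delta$. Because $\ZC$ is modular of dimension $\Delta^2$, $\FPdim(\mathcal E_2)\FPdim(\mathcal J)=\Delta^2$, so $\FPdim(\mathcal J)=\Delta/4=q(q+1)$.

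\emph{Structure of $\mathcal J$.}
\begin{enumerate}
\item Decide which summands of $I(\one)$ lie in $\mathcal J$. A summand $Z_\chi$ centralizes a simple $W$ exactly when $\Ss_{Z_\chi,W}=\dim Z_\chi\dim W$. By \eqref{eq:central-character} this means $\chi(F(W))=\FPdim F(W)$. For $W\in\mathcal E_2$ the restriction lies in $\D$. The character $\chi_-$ agrees with $\FPdim$ on $G$ and $t$, whereas $\eta$ and the $\eta_\psi$ do not; every summand $Y_i\ne X_0$ is itself in $\mathcal E_2$ and witnesses a failure. Hence the part of $I(\one)$ in $\mathcal J$ is exactly $\one\oplus A$.
\item Apply Lemma~\ref{lem:ratio} to $F|_{\mathcal J}$ onto its image. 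This gives $\FPdim F(\mathcal J)=q(q+1)/(q+1)=q$.
\item Every fusion subcategory of $\C$ containing $\rho$ is all of $\C$, because $\rho\otimes\rho$ contains every simple. So $F(\mathcal J)\subset\D$.
\item The fusion subcategories of $\D$ are $\D$ itself and the $\Vect_{H'}$ with $H'\le G$. Dimension $q<4q$ forces $F(\mathcal J)=\Vect_{H'}$ with $|H'|=q$, and its associator is trivial, as in Lemma~\ref{r8:lem:pointed}.
\item Since $F(\mathcal J)\subset\D$, we get $\mathcal J\subset\mathcal E_2=\mathcal J'$ by the double-centralizer theorem. Thus $\mathcal J$ is symmetric.
\item Deligne's theorem gives $\mathcal J\simeq\Rep(K,z)$, and it remains to exclude a fermion $z$. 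Such a $z$ is invertible, lies in $\mathcal J$, and has $F(z)=h\in H'$. Then $z\otimes A$ is simple of dimension $q$ and its restriction $h\otimes F(A)=F(A)$ contains $\one$. So $z\otimes A$ is a summand of $I(\one)$ of dimension $q$, which forces $z\otimes A\cong A$.
\item Twists in $I(\one)$ are trivial, so $\theta_{z\otimes A}=\theta_A=1$. But in a symmetric category the double braiding is trivial, so $\theta_{z\otimes A}=\theta_z\theta_A=-1$. This contradiction shows $\mathcal J$ is Tannakian, and $|K|=q(q+1)$.
\end{enumerate}

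\emph{The subgroup $H$.} Finally, compose $F|_{\mathcal J}$ with the fiber functor of $\Vect_{H'}$. This gives a fiber functor on $\mathcal J$, and I would choose the equivalence $\mathcal J\simeq\Rep(K)$ compatibly with it. The tensor functor $\Rep(K)\to\Vect_{H'}$ is then dominant with unit algebra $R(\one)=\one\oplus A$, which is a commutative algebra in $\Rep(K)$ with $\dim\Hom(\one,R(\one))=1$. Such algebras have the form $\operatorname{Fun}(K/H)$ for a subgroup $H$, and the functor is $\operatorname{Res}^K_H$ followed by an identification $\Rep(H)\simeq\Vect_{H'}$. The index is $[K:H]=\dim R(\one)=q+1$. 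Pointedness makes $H$ abelian with $\widehat H\cong H'\le G$, and $\operatorname{Fun}(K/H)\cong\one\oplus A$. This is the point where I would cite the standard correspondence between connected étale algebras in $\Rep(K)$ and subgroups of $K$.
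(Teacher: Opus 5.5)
The dimension count is the same as the paper's. Steps 1--5 are a valid and somewhat more direct route to symmetry. You get $A\in\mathcal J$ immediately from $\chi_-$ agreeing with $\FPdim$ on $\D$, and this gives $\FPdim F(\mathcal J)=q$ and pointedness at once. The paper instead first excludes $\rho$ and $t$ from $F(\mathcal J)$ through the nonintegral ratios $1/4$ and $(q+1)/4$, and only pins the unit part down to $\one\oplus A$ after the Tannakian step. There is one slip in step 1: the $Y_i\neq X_0$ are not in $\mathcal E_2$, because their images contain $\rho$. The witness for $\eta$ and for every $\eta_\psi$ is $X_0$ itself, since these characters take the value $-n$ or $0$ on $t$ and $F(X_0)$ contains $t$.

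The genuine gap is step 6. In Deligne's theorem, $z$ is a central element of order two in $K$, not an object of $\mathcal J$. An invertible object of twist $-1$ exists in $\Rep(K,z)$ only if some linear character of $K$ takes the value $-1$ at $z$, i.e.\ only if $z\notin[K,K]$. For example, $\Rep(Q_8,-1)$ is super-Tannakian, yet all four of its invertibles are even. So your argument does not exclude a super-Tannakian $\mathcal J$.

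Even granting an invertible fermion $\delta$ with $F(\delta)=h$, the isomorphism $h\otimes F(A)\cong F(A)$ is asserted without proof. It is true, but it needs an argument. Take $\Ss_{A,Y}=q\,\chi_-(\one\oplus g\oplus\rho)=0$ for $Y\neq X_0$ and $\Ss_{A,X_0}=q(2q+2)$. Symmetry of $\Ss$, \eqref{eq:central-character}, and Fourier inversion on $G$ then give $F(A)\cong\bigoplus_{g\in H'}g$.

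The paper avoids fermion objects altogether. If $\mathcal J$ were not Tannakian, its maximal Tannakian subcategory would have dimension $\Delta/8$. Its centralizer would then have dimension $8\Delta$ and strictly contain $\mathcal E_2$, so its forgetful image would be all of $\C$. Lemma~\ref{lem:ratio} would then force its unit part to have dimension $8$. But that unit part contains $\one\oplus A\oplus B\oplus X_0$, of dimension $4(q+1)\geq 12$.

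Within your own setup there is also a fix. Since $\theta_A=1$, the object $\one\oplus A$ lies in the even part $\mathcal J_0$, which has dimension $q(q+1)/2$. Lemma~\ref{lem:ratio} then gives $\FPdim F(\mathcal J_0)=q/2$. Once $F(A)\cong\bigoplus_{g\in H'}g$ is established, this is too small to contain the $q$ distinct invertibles of $F(A)$.

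Finally, in your last paragraph the structure you need comes from the central structure of $F|_{\mathcal J}$ via \cite[Lemma~3.5]{DMNO}, as in the paper. That is the source of commutativity and \'etaleness of $\one\oplus A$, and of identifying $F|_{\mathcal J}$ with the free-module functor. Choosing $\mathcal J\simeq\Rep(K)$ compatibly with the composite fiber functor presupposes that this composite is braided, which is not known in advance.
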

\begin{proof}
The image of $F|_{\mathcal E_2}$ contains $t$ by Lemma~\ref{lem:m2-distinguished}
and hence is all of $\D$. The part of $I(\one)$ in $\mathcal E_2$ is
$\one\oplus A\oplus B\oplus X_0$, of dimension $4(q+1)$. Lemma~\ref{lem:ratio}
gives $\FPdim(\mathcal E_2)=4q\cdot4(q+1)=4\Delta$.
The M\"uger dimension formula in the nondegenerate category $\ZC$
\cite[Corollary~8.21.7]{EGNO} gives
$\FPdim(\mathcal J)=\Delta^2/(4\Delta)=\Delta/4$.

If $F(\mathcal J)$ contained $\rho$, it would be all of $\C$ and
the ratio in Lemma~\ref{lem:ratio} would be $1/4$, impossible because
the right adjoint unit image has integral positive dimension. If it
contained $t$, it would be all of $\D$, and the ratio would be
$(q+1)/4$, not an integer because $q$ is even. Hence $F(\mathcal J)$
is pointed. In particular $\mathcal J\subseteq\mathcal E_2$,
while the double-centralizer theorem gives
$\mathcal J'=\mathcal E_2$. Hence $\mathcal J\subseteq\mathcal J'$
and $\mathcal J$ is symmetric.

If $\mathcal J$ were not Tannakian, its maximal Tannakian subcategory
would have half the dimension by \cite[Corollary~9.9.32]{EGNO}.
Its centralizer would then have dimension $8\Delta$. This centralizer
contains $\mathcal E_2$; since its dimension is larger than
$\FPdim(\mathcal E_2)=4\Delta$, it cannot be contained in
$\mathcal E_2$, so its forgetful image contains $\rho$ and is all of
$\C$. The ratio in Lemma~\ref{lem:ratio} is therefore $8$.
On the other hand it contains $\one,A,B,X_0$, so its right adjoint unit image
has dimension at least $4(q+1)\ge12$, a contradiction. Hence
$\mathcal J\simeq\Rep(K)$.

It remains to identify the right-adjoint unit image exactly. Let
$\eta_B$ be the character of $K_0(\C)$ corresponding to the summand
$B$; explicitly,
\[
 \eta_B(g)=1\quad(g\in G),\qquad
 \eta_B(t)=-n,\qquad \eta_B(\rho)=0,
 \qquad \FPdim(B)=q+1.
\]
Equation~\eqref{eq:central-character} gives, for every central simple $Z$,
\[
 \Ss_{B,Z}=(q+1)\eta_B(FZ),
\]
with the normalization of \eqref{eq:central-character}. Since
$F(X_0)$ contains $t$ and no $\rho$, write
$F(X_0)\cong \bigoplus_g a_g g\oplus \gamma t$ with $\gamma\ge1$. If $B$ belonged
to $\mathcal J$, then $B$ would centralize $X_0$, and hence
\[
 (q+1)\eta_B(FX_0)=\Ss_{B,X_0}
   =\FPdim(B)\FPdim(X_0).
\]
But
\[
 \eta_B(FX_0)=\sum_g a_g-n\gamma
   <\sum_g a_g+n\gamma=\FPdim(X_0),
\]
a contradiction. Thus $B\notin\mathcal J$. The object $X_0$ is not in
$\mathcal J$ because its image contains $t$, and every other $Y_i$ is not
in $\mathcal J$ because its image contains $\rho$. Therefore the part of
$I(\one)$ supported in $\mathcal J$ is contained in $\one\oplus A$. It cannot
be only $\one$: in that case the pointed image would have dimension
$q(q+1)>|G|=2q$, contradicting that the pointed subcategory of $\C$ has
exactly $|G|$ simples. Hence the right-adjoint unit image is exactly
$\one\oplus A$. The ratio in Lemma~\ref{lem:ratio} gives
$\FPdim(F(\mathcal J))=q$.

Set $\mathcal B=F(\mathcal J)$ and $L=F|_{\mathcal J}:\mathcal J\to
\mathcal B$. Restricting the given half-braidings to objects of
$\mathcal B$ makes $L$ a dominant central functor. By
\cite[Lemma~3.5]{DMNO}, its right-adjoint unit object
$A_0=\one\oplus A$ is a connected \'etale algebra in
$\mathcal J\simeq\Rep(K)$, and
\[
 \mathcal B\simeq\Rep(K)_{A_0}.
\]
A finite-dimensional commutative separable algebra in $\Rep(K)$
is the function algebra of a finite $K$-set. Connectedness means
that the set is transitive. Thus, for a stabilizer subgroup $H\le K$,
\[
 A_0\cong\operatorname{Fun}(K/H),\qquad [K:H]=\dim(A_0)=q+1.
\]
Taking the fiber of an equivariant module at the coset $H$ gives
$\Rep(K)_{A_0}\simeq\Rep(H)$; the free-module functor corresponds
to $\operatorname{Res}^K_H$. It follows that $L$ is this restriction
functor under the stated equivalences. Since $\mathcal B$ is pointed,
$H$ is abelian, and $|H|=|K|/(q+1)=q$. Its simple representations
are its characters; their images in $\C_{\mathrm{pt}}$ identify
$\widehat H$, rather than the stabilizer $H$ itself, with a
subgroup of $G$.
\end{proof}

\begin{proposition}\label{prop:m2}
If $m=2$, then $n=2$, $G\cong C_2\times C_2$, $d=4$, and $\kappa=2$.
\end{proposition}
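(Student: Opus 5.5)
Building on Proposition~\ref{prop:m2-centralizer}, I will show that $K$ acts sharply $2$-transitively on $K/H$. Then $K$ is a Frobenius group whose kernel $N_0$ has order $q+1$ and whose complement $H$ has order $q$. The arithmetic consequences, together with Lemma~\ref{lem:order-bound}, should then force $n=2$.

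The permutation character of $K$ on $K/H$ is $\operatorname{Fun}(K/H)\cong\one\oplus A$ in $\Rep(K)$, and $A$ is simple in $\mathcal J$. The permutation representation is therefore the sum of the trivial character and one irreducible character, so the action of $K$ on $K/H$ is doubly transitive. It has degree $q+1$ and $|K|=q(q+1)$, so it is sharply $2$-transitive. By the classical structure theory (Frobenius, Zassenhaus), $K$ is then a Frobenius group with complement $H$ and an elementary abelian normal kernel of order $q+1$. Hence $q+1=p^r$ for a prime $p$, and $H$ acts regularly on the nonidentity elements of the kernel. A Frobenius complement that is abelian must be cyclic, because its Sylow subgroups are cyclic or generalized quaternion, and the quaternion option is nonabelian. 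So $H\cong C_q$.

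Next I use $\widehat H\cong C_q\hookrightarrow G$. By Lemma~\ref{lem:order-bound}, every element of $G$ has order at most $n$. The group $\widehat H\cong C_q$ contains an element of order $q=n^2/2$, so $n^2/2\le n$, that is $n\le2$. Since $n=2a\ge2$, we get $n=2$, $a=1$, $q=2$, $d=2q=4$ and $\kappa=2q-2=2$. The case $n=2$ of Lemma~\ref{lem:order-bound} gives $G\cong C_2\times C_2$. As a consistency check, $q+1=3$ is a prime power, and $K\cong S_3$ is sharply $2$-transitive of degree $3$.

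The main obstacle is the first step: deducing double transitivity, and sharpness, from the fact that $A$ is a single simple object of $\mathcal J\simeq\Rep(K)$. This needs care with the identification of the unit algebra as a $K$-representation and with the fact that $\dim A=q$ as a $K$-module. The group-theoretic input, that an abelian Frobenius complement is cyclic, is standard. I would cite it, or avoid it entirely: any element of the group $\widehat H\le G$ already has order at most $n$, and $|\widehat H|=q$ alone does not bound $n$, so cyclicity is genuinely needed. An alternative is to use that sharply $2$-transitive groups of degree $q+1$ have complement isomorphic to the multiplicative group of a near-field, which for abelian complements is a field, hence cyclic.
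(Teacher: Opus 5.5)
Your proposal is correct and follows the paper's proof essentially step for step. Double transitivity comes from $\CC[K/H]\cong\one\oplus A$, sharpness from $|K|=q(q+1)$, and then the Frobenius kernel structure, cyclicity of the abelian complement $H$, and the order bound of Lemma~\ref{lem:order-bound} applied to $\widehat H\le G$; the only minor difference is that you obtain cyclicity of $H$ from the Sylow structure of Frobenius complements, whereas the paper argues directly that $H$ has a faithful character on $V\otimes_{\FF_p}\overline{\FF}_p$ and so embeds in the multiplicative group of a finite field.
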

\begin{proof}
By Proposition~\ref{prop:m2-centralizer}, $K$ acts transitively on
$\Omega=K/H$, where $|\Omega|=q+1$, $|H|=q$, and
\[
 \CC[\Omega]\cong\one\oplus A.
\]
Thus $\dim\End_K(\CC[\Omega])=2$. This dimension counts the
$K$-orbits on $\Omega\times\Omega$; the diagonal is one orbit,
so the action is doubly transitive. Moreover,
\[
 |K|=q(q+1)=|\Omega|(|\Omega|-1).
\]
The stabilizer of an ordered pair of distinct points is therefore
trivial. In particular the action is faithful and sharply doubly
transitive.

The Frobenius kernel theorem for finite sharply doubly transitive
groups \cite[Section~7.6]{DixonMortimer} gives
\[
 K=V\rtimes H,\qquad |V|=q+1,
\]
where $V$ is a regular normal subgroup and $H$ acts regularly by
conjugation on $V\setminus\{1\}$. For completeness, this
transitivity also shows that $V$ is elementary abelian. All its
nonidentity elements have a common prime order $p$; Cauchy's
theorem makes $V$ a $p$-group. Since $Z(V)$ is nontrivial and
characteristic, transitivity forces $Z(V)=V$. Thus
$V\cong(C_p)^r$, $q+1=p^r$, and $p\nmid|H|$.

The group $H$ is abelian by Proposition~\ref{prop:m2-centralizer}.
Its representation on
$V\otimes_{\mathbb F_p}\overline{\mathbb F}_p$ is therefore a
sum of one-dimensional characters. Every occurring character is
faithful. Indeed, if a nonidentity $h\in H$ lay in its kernel,
then $h-I$ would have a nonzero kernel over
$\overline{\mathbb F}_p$. Its determinant, computed over
$\mathbb F_p$, would vanish, so it would already have a nonzero
kernel over $\mathbb F_p$. This contradicts the fixed-point-free
action of $h$ on $V\setminus\{0\}$.
A faithful character embeds $H$ in the multiplicative group of
some finite extension of $\mathbb F_p$, which is cyclic. Hence
$H$ and its character group $\widehat H$ are cyclic of order $q$.
Since $\widehat H\le G$, the group $G$ has an element of order
$q=n^2/2$.

Lemma~\ref{lem:order-bound} gives $\ord(g)\le n$ for every
$g\in G$. Hence $n^2/2\le n$, and $n\ge2$ implies $n=2$.
Now $G$ has order four and every element has order at most two,
so $G\cong C_2\times C_2$. Equation~\eqref{eq:m2-params} gives
$d=4$ and $\kappa=2$.
\end{proof}

\begin{theorem}\label{thm:integral}
If $d_+\in\Z$, then necessarily
\[
(n,d_+,\kappa)=(2,4,2)\quad\text{or}\quad(2,8,7),
\]
and in either case $G\cong C_2\times C_2$ and $\operatorname{rank}(\C)=6$.

\end{theorem}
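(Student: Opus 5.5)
This theorem assembles the integral-case results already proved in this section, so the plan is a short case analysis on the parameters of Lemma~\ref{lem:integral-param}. First I will make sure the lemma applies, which needs $\kappa>0$. If $\kappa=0$, then $d_+^2=2n^2$, so $d_+=n\sqrt2\notin\Z$, contradicting $d_+\in\Z$. Hence $\kappa>0$, and Lemma~\ref{lem:integral-param} supplies positive integers $a,m$ with $n=am$, $d=2a^2m$ and $\kappa=m(2a^2-1)$. Proposition~\ref{prop:m-at-most-two} then restricts to $m\in\{1,2\}$.

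Next I will split into the two branches and quote the corresponding propositions. For $m=1$, Proposition~\ref{prop:m1} gives $n=2$, $G\cong C_2\times C_2$, $d=8$ and $\kappa=7$, so $(n,d_+,\kappa)=(2,8,7)$. For $m=2$, Proposition~\ref{prop:m2} gives $n=2$, $G\cong C_2\times C_2$, $d=4$ and $\kappa=2$, so $(n,d_+,\kappa)=(2,4,2)$. As a consistency check, both triples satisfy $d^2=\kappa d+2n^2$: $64=56+8$ and $16=8+8$.

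Finally I will count simple objects. The simples of $\C$ are the elements of $G$ together with $t$ and $\rho$. With $|G|=n^2=4$, this gives $\operatorname{rank}(\C)=4+1+1=6$.

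The substantive difficulties, namely the Tannakian argument in the $m=1$ branch and the sharply doubly transitive group in the $m=2$ branch, are already handled in those propositions. The only point needing care here is excluding $\kappa=0$ before invoking Lemma~\ref{lem:integral-param}, and irrationality of $n\sqrt2$ settles that.
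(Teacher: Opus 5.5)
Your proposal is correct and follows the paper's proof: restrict to $m\in\{1,2\}$ via Proposition~\ref{prop:m-at-most-two}, then quote Propositions~\ref{prop:m1} and~\ref{prop:m2}, with rank $n^2+2=6$. Your explicit exclusion of $\kappa=0$ via $d_+=n\sqrt2\notin\Z$ is the same observation the paper makes inside the proof of Lemma~\ref{lem:integral-param}.
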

\begin{proof}
Proposition~\ref{prop:m-at-most-two} leaves $m=1$ or $m=2$. Propositions
\ref{prop:m1} and \ref{prop:m2} give the two alternatives.
\end{proof}

\medbreak
Thus the integral case leaves $R_2$ and $R_7$, as defined in
\eqref{eq:Rkappa}. Example~\ref{example:PSU} categorifies $R_7$;
we now construct a categorification of $R_2$.

\subsection{A group-theoretical categorification of \texorpdfstring{$R_2$}{R2}}\label{r2:sec:positive}

\subsubsection{Group-theoretical data}

For a finite group $L$, a normalized unitary $3$-cocycle $\omega$, a
subgroup $H$, and a normalized unitary $2$-cochain $\psi$ satisfying
$\delta\psi=\omega|_H$, denote by $\cC(L,\omega,H,\psi)$ the category of
bimodules over the twisted regular algebra of $H$ in $\Vect_L^\omega$.
Its simple objects are indexed by pairs $(x,\pi)$, where $x$ represents
a double coset $HxH$ and $\pi$ is an irreducible projective representation
of $H_x=H\cap xHx^{-1}$. Their dimensions are
\begin{equation}\label{r2:eq:double-coset-dimension}
\FPdim(x,\pi)=[H:H_x]\dim\pi.
\end{equation}
The double-coset support of tensor products is multiplicative, and the
component supported on $H$ is $\Rep(H)$; see
\cite[Theorems~4.1 and~5.1]{GN}.

Set
\[
L=\PGL_2(\FF_3)\cong S_4.
\]
For $x\in L$, choose the representative $s(x)\in\GL_2(\FF_3)$ whose first
nonzero entry, in row-major order, equals $1$. Define
\begin{equation}\label{r2:eq:alpha-beta}
s(x)s(y)=(-I)^{\beta(x,y)}s(xy),\qquad
\alpha(x)=
\begin{cases}
0,&\det s(x)=1,\\
1,&\det s(x)=2.
\end{cases}
\end{equation}
Here $\beta$ takes values in $\FF_2$ and is the normalized $2$-cocycle
of the central extension by $\{I,-I\}$; $\alpha:L\to\FF_2$ is a
homomorphism. The cup product gives a normalized unitary $3$-cocycle
\begin{equation}\label{r2:eq:omega}
\omega(x,y,z)=(-1)^{\alpha(x)\beta(y,z)}.
\end{equation}

Take $H=\{1,a,b,c\}$, with representatives
\begin{equation}\label{r2:eq:H}
s(1)=I,\qquad
s(a)=\begin{pmatrix}0&1\\1&0\end{pmatrix},\qquad
s(b)=\begin{pmatrix}0&1\\2&0\end{pmatrix},\qquad
s(c)=\begin{pmatrix}1&0\\0&2\end{pmatrix}.
\end{equation}
In the action on $\mathbb P^1(\FF_3)$, this is the subgroup generated by
two disjoint transpositions. In particular, $H\cong C_2^2$ is not normal.
Define $\psi(x,y)=\ii^{q(x,y)}$, where $q(1,x)=q(x,1)=0$ and
\begin{equation}\label{r2:eq:q-table}
\begin{array}{c|rrr}
q&a&b&c\\ \hline
a&0&2&0\\
b&3&0&1\\
c&1&0&1
\end{array}
\qquad\text{in }\ZZ/4\ZZ.
\end{equation}

\begin{lemma}\label{r2:lem:cochains}
The cochains \eqref{r2:eq:omega} and \eqref{r2:eq:q-table} satisfy
$\delta\psi=\omega|_H$.
\end{lemma}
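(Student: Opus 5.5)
The identity is a finite verification on $H\cong C_2^2$, so the plan is to compute both sides explicitly and compare them. Write everything additively. Put $\omega=(-1)^{w}$, so that $w(x,y,z)=\alpha(x)\beta(y,z)\in\FF_2$, and use $\psi=\ii^{q}$. The multiplicative coboundary
\[
(\delta\psi)(x,y,z)=\psi(y,z)\,\psi(xy,z)^{-1}\,\psi(x,yz)\,\psi(x,y)^{-1}
\]
then becomes the congruence
\[
q(y,z)-q(xy,z)+q(x,yz)-q(x,y)\equiv 2\,\alpha(x)\beta(y,z)\pmod 4
\]
for all $x,y,z\in H$. We read $2\alpha\beta$ in $\ZZ/4\ZZ$ through the embedding $\FF_2\to\ZZ/4\ZZ$, $1\mapsto 2$. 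Both sides vanish as soon as one argument is $1$, because $q$, $\alpha$ and $\beta$ are normalized. Hence only the $27$ triples in $\{a,b,c\}^3$ need checking.

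First I restrict $\alpha$ and $\beta$ to $H$ using the representatives \eqref{r2:eq:H}.
\begin{itemize}
\item Determinants: $\det s(a)=2$, $\det s(b)=1$, $\det s(c)=2$. So $\alpha(a)=\alpha(c)=1$ and $\alpha(b)=0$.
\item Squares: $s(a)^2=s(c)^2=I$ and $s(b)^2=-I$. So $\beta(a,a)=\beta(c,c)=0$ and $\beta(b,b)=1$.
\item Mixed products: $s(a)s(b)=-s(c)$, $s(b)s(a)=s(c)$, $s(a)s(c)=-s(b)$, $s(c)s(a)=s(b)$, $s(b)s(c)=-s(a)$, $s(c)s(b)=s(a)$. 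So $\beta(a,b)=\beta(a,c)=\beta(b,c)=1$ and $\beta(b,a)=\beta(c,a)=\beta(c,b)=0$.
\end{itemize}
These are immediate $2\times2$ computations over $\FF_3$, normalized by the first-nonzero-entry convention. In particular the multiplication on $H$ is the Klein one: $ab=c$, $bc=a$, $ac=b$.

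Next I organize the $27$ checks by $x$. For $x=b$ the right side is identically $0$, since $\alpha(b)=0$. For $x=a$ and $x=c$ the right side is $2\beta(y,z)$, which depends only on the $3\times3$ table of $\beta$ just computed. For each $x$, the left side is obtained from the table \eqref{r2:eq:q-table}. The term $q(y,z)$ is the table itself. The terms $q(xy,z)$, $q(x,yz)$ and $q(x,y)$ are the same table with its rows permuted by left multiplication by $x$, a column of the $x$-row read off at $yz$, and a constant along $z$, respectively. So each $x$ gives one $3\times3$ array of residues mod $4$, to be compared entrywise with $0$ or $2\beta$.

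A quick sanity check first: the left side must take values in $\{0,2\}$. This is a necessary condition, since $\omega|_H$ is $\pm1$-valued. I would verify it on the diagonal triples $(x,x,x)$ and on the cases with $y=z$ before doing the full tables; any typo in the table \eqref{r2:eq:q-table} or in $\beta$ would surface there.

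The only real obstacle is bookkeeping: sign conventions in $\delta$ and the normalization of the representatives $s(\cdot)$. I would fix the coboundary convention to match the pentagon convention $\omega(x,y,z)$ used for $\Vect_L^\omega$ in \cite{GN}. If the tables then disagree by a global pattern, the first thing to try is the opposite convention (inverting $\psi$). That replaces $q$ by $-q$, and modulo the factor $2$ on the right side it changes nothing, because $2\beta$ is fixed by negation. So the check is in fact convention-independent, and only the row/column reading of $q$ matters.
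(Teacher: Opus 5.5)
Your plan is correct and is in substance the paper's proof. Your values $\alpha(a)=\alpha(c)=1$, $\alpha(b)=0$ and your $\beta$-table, which is nonzero exactly at $(a,b),(a,c),(b,b),(b,c)$, agree with the paper's closed forms $\alpha(c^va^u)=u+v$ and $\beta(c^sa^r,c^wa^p)=rw$. With your stated convention for $\delta\psi$, all $27$ entrywise congruences do hold, so the fallback to the inverted convention is never needed. The only difference is bookkeeping: instead of a case table, the paper writes $q(x,y)=v(r+s)+2(vrs+urs+uvr)$ in binary coordinates and checks the single polynomial congruence $q(y,z)+q(x,yz)-q(xy,z)-q(x,y)\equiv 2(u+v)rw \pmod 4$.
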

\begin{proof}
Write $x=c^v a^u$, $y=c^s a^r$, and $z=c^w a^p$, with all six exponents
in $\{0,1\}$. The representatives in \eqref{r2:eq:H} have
$s(c)s(a)=s(b)$ and $s(a)s(c)=-s(c)s(a)$, so
\[
\alpha(x)=u+v\pmod2,\qquad \beta(y,z)=rw\pmod2.
\]
The table \eqref{r2:eq:q-table} is equivalently
\[
q(x,y)=v(r+s)+2(vrs+urs+uvr)\pmod4.
\]
Expanding products in $H$ by the identity
$r\mathbin{\oplus}s=r+s-2rs$ for binary coordinates gives
\[
q(y,z)+q(x,yz)-q(xy,z)-q(x,y)
\equiv2(u+v)rw\pmod4.
\]
This is precisely the exponent identity for
$\delta\psi(x,y,z)=\omega(x,y,z)$.
\end{proof}

\subsubsection{The three double cosets}

Choose
\begin{equation}\label{r2:eq:double-representatives}
x=\left[\begin{pmatrix}1&1\\1&2\end{pmatrix}\right],\qquad
u=\left[\begin{pmatrix}0&1\\1&1\end{pmatrix}\right].
\end{equation}
Label the four projective points by $(1,0),(0,1),(1,1),(1,2)$.
Then $a=(12)$, $c=(34)$, $x=(13)(24)$, and $u=(1234)$.
The normalizer of $H=\langle(12),(34)\rangle$ has order $8$, and $x$
lies in its nontrivial coset modulo $H$. Moreover,
$uHu^{-1}=\langle(23),(14)\rangle$ has trivial intersection with $H$.
Thus the double cosets represented by $1,x,u$ have sizes $4,4,16$ and
exhaust $L$:
\begin{equation}\label{r2:eq:three-cosets}
L=H\sqcup xH\sqcup HuH,\qquad N_L(H)=H\sqcup xH.
\end{equation}
For any $y\in L$, the orbit--stabilizer formula for the left action of
$H\times H$ on $HyH$ is
\[
 |HyH|=\frac{|H|^2}{|H\cap yHy^{-1}|}.
\]
For $y=1$ and $y=x$, the intersection is $H$ because both elements lie
in $N_L(H)$, giving size $4$ in each case. For $y=u$, the displayed
conjugate subgroup has trivial intersection with $H$, giving size $16$.
The sizes add to $24=|L|$, so no further double coset is possible.

For clarity, in the conventions of \cite[Section~5.1]{GN}, the projective
factor on $H_x$ is
\begin{equation}\label{r2:eq:factor}
\psi^x(h,k)=\psi(h,k)\psi(x^{-1}k^{-1}x,x^{-1}h^{-1}x)
\frac{\omega(h,k,x)\omega(h,kx,x^{-1}k^{-1}x)}
{\omega(hkx,x^{-1}k^{-1}x,x^{-1}h^{-1}x)}.
\end{equation}
Substitution of \eqref{r2:eq:alpha-beta}--\eqref{r2:eq:q-table} gives the exponent
matrix for $\psi^x$, in the order $1,a,b,c$,
\begin{equation}\label{r2:eq:factor-table}
\begin{pmatrix}
0&0&0&0\\
0&1&1&0\\
0&3&0&1\\
0&2&3&1
\end{pmatrix}\pmod4.
\end{equation}
For example, substituting $(h,k)=(a,a),(a,b),(b,a),(b,c)$ into
\eqref{r2:eq:factor} gives the exponents $1,1,3,1$, respectively; the
other entries are obtained by the same four-element calculation and are
listed in the displayed matrix. Hence
$\psi^x(h,k)/\psi^x(k,h)=-1$ for any two distinct nonidentity elements
of $H$. Its radical is therefore trivial. Equivalently,
the ordinary algebraic center is one-dimensional:
\[
Z_{\mathrm{alg}}\bigl(\CC_{\psi^x}[H]\bigr)=\CC,\qquad
\CC_{\psi^x}[H]\cong M_2(\CC).
\]

\begin{theorem}\label{r2:prop:R2}
The category $\cC(L,\omega,H,\psi)$ defined above is a unitary
categorification of $R_2$.
\end{theorem}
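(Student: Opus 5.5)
We will first list the simple objects via the double-coset description \eqref{r2:eq:double-coset-dimension}, and then identify the fusion rules with those of $R_2$ in \eqref{eq:Rkappa}.

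For the trivial double coset $H$, the stabilizer is $H_1=H$. Since $\psi$ restricted to $H$ is the twisting cochain and $\omega|_H=\delta\psi$, the component is $\Rep(H)$ twisted trivially. We thus obtain four invertible simples, forming the group $\widehat H\cong C_2\times C_2$. For $x$, we have $H_x=H$ and the projective factor $\psi^x$ is nondegenerate by \eqref{r2:eq:factor-table}. Hence $\CC_{\psi^x}[H]\cong M_2(\CC)$ has a unique irreducible projective representation, of dimension $2$, which gives a simple $t$ with $\FPdim(t)=2$. For $u$, we have $H_u=1$, which gives a single simple $\rho$ with $\FPdim(\rho)=[H:1]=4$. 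The rank is six and the global dimension is $4+4+16=24=|L|$, as required for a group-theoretical category. Unitarity follows because $\omega$ and $\psi$ take values in roots of unity: the bimodule category over a unitary twisted group algebra in unitary $\Vect_L^\omega$ is a unitary fusion category, as in \cite{GN}.

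Next we will determine the fusion rules using the multiplicativity of double-coset supports. Because $xH=Hx$ and $x^2\in H$ (it lies in the order-$8$ normalizer, and $x=(13)(24)$ is an involution), we get $t\otimes t$ supported on $H$. It has dimension $4$ and is therefore a sum of invertibles. Since $t$ is the only simple on its double coset, $g\otimes t\cong t$ for every invertible $g$. Frobenius reciprocity then forces each invertible to occur in $t\otimes t$ exactly once, so $t^2=\sum_g g$. Similarly $g\otimes\rho\cong\rho$, and $t\otimes\rho$ is supported on $xHuH=HuH$, which forces $t\rho=2\rho$ by dimension. For $\rho\otimes\rho$, reciprocity gives $[\rho\rho,g]=[\rho,g\rho]=1$ for each $g\in\widehat H$, and $[\rho\rho,t]=[\rho t,\rho]=2$ because $\rho$ is self-dual. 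Self-duality holds because $HuH=Hu^{-1}H$: this is the only double coset of size $16$. The remaining dimension is $16-4-4=8=2\cdot 4$, so $\rho^2=\sum_g g+2t+2\rho$. This is exactly $R_2$ with $G=\widehat H\cong C_2\times C_2$.

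The main obstacle is the identification of $t$. We must confirm that the component on $HxH$ has a single simple of dimension $2$, rather than four invertibles. This rests entirely on the nondegeneracy of $\psi^x$, that is, on the commutator computation from \eqref{r2:eq:factor} recorded in \eqref{r2:eq:factor-table}. We take that table as given and only check that the alternating form $\psi^x(h,k)/\psi^x(k,h)$ is nontrivial on every pair of distinct nonidentity elements. If it were degenerate, $t$ would split, the category would be pointed of rank $9$, and it would not realize $R_2$.
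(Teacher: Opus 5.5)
Your proposal is correct and follows the paper's proof essentially step by step: the six simples come from the three double cosets, using the nondegeneracy of $\psi^x$ on $H_x=H$. The fusion rules then follow from multiplicativity of supports, Frobenius reciprocity and a dimension count, and unitarity is handled by the same remark about the unitary associator and the twisted group algebra. Two small points: the step $xHuH=HuH$ (and likewise $HuH\,xH=HuH$ for $\rho\otimes t$) deserves the one-line reason that multiplication by $x\in N_L(H)$ preserves the complement $HuH$ of $N_L(H)$. Also, your counterfactual aside is off: a trivial class for $\psi^x$ would give rank nine with eight invertibles and $\rho$ still of dimension $4$, so the category would not be pointed.
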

\begin{proof}
The component over $H$ consists of four invertibles indexed by
$\widehat H\cong C_2^2$. The nondegenerate factor \eqref{r2:eq:factor-table}
gives one simple object $t$ of dimension $2$ over $xH$. The trivial
stabilizer of $HuH$ gives one simple object $\rho$ of dimension $4$.
These are all the simple objects, by \eqref{r2:eq:double-coset-dimension}.

Both $xH$ and $HuH$ are invariant under inversion: the first is the
nonidentity coset in the group $N_L(H)/H$ of order two, and the second is
the complement of $N_L(H)$ in $L$. Thus $t$ and $\rho$ are self-dual.
Tensoring by an invertible preserves each of these supports, so it fixes
$t$ and $\rho$ on both sides. The product of the support $xH$ with itself
is $H$. For every $g\in\widehat H$, Frobenius reciprocity and $g\otimes t\cong t$
give
\[
 [t\otimes t,g]=[t,g\otimes t]=1,
\]
while no other simple is supported on $H$; hence
$t\otimes t\cong \bigoplus_{g\in\widehat H}g$.
Multiplication by $N_L(H)$ preserves its complement $HuH$ on both sides.
Therefore $t\otimes\rho$ and $\rho\otimes t$ are supported on $HuH$, where $\rho$ is
the only simple object. Comparing dimensions gives
$t\otimes\rho\cong \rho\otimes t\cong 2 \rho$. Finally, reciprocity gives, for every
$g\in\widehat H$,
$[\rho\otimes \rho,g]=[\rho,g\otimes\rho]=1$, and
$[\rho\otimes \rho,t]=[\rho,t\otimes\rho]=2$. Thus
\[
\rho\otimes \rho\cong \bigoplus_{g\in\widehat H}g\oplus 2 t\oplus m \rho
\]
for some $m\ge0$. The dimension equation $16=4+4+4m$ yields $m=2$.

The associator in $\Vect_L^\omega$ is unitary. The twisted regular algebra
of $H$, with its orthonormal homogeneous basis and the unit-modulus
multiplication coefficients $\psi$, is the standard separable unitary
algebra. Its bimodule category, with relative tensor product, is a unitary
fusion category. This proves existence with the asserted structure.
\end{proof}

We can now prove the classification stated in the introduction.

\begin{proof}[Proof of Theorem~\ref{thm:classification}]
If $d_+\notin\Z$, Proposition~\ref{prop:rank-reduction} and
Theorem~\ref{thm:main} leave $n=2$ and $\kappa\in\{0,8\}$.
Theorem~\ref{r8:nonexistence} excludes $\kappa=8$, while
Theorem~\ref{thm:kappa-zero} treats $\kappa=0$ and supplies realizations.
If $d_+\in\Z$, Theorem~\ref{thm:integral} gives exactly the two
possibilities $(n,d_+,\kappa)=(2,4,2),(2,8,7)$.
Lemma~\ref{lem:order-bound} identifies $G$ with the Klein four-group,
and \eqref{eq:fusion} gives $K_0(\C)\cong R_\kappa$.
Theorem~\ref{r2:prop:R2} and Example~\ref{example:PSU} establish
existence for $R_2$ and $R_7$, respectively. The dimension table follows
from $\FPdim(\C)=8+d_+^2$.
\end{proof}

\subsection*{Acknowledgments}
We sincerely thank Andrew Schopieray for proposing this topic and helping us prove Proposition 3.1. S\'ebastien Palcoux is supported by the National Natural Science
Foundation of China (NSFC), Grant no.~12471031. GPT-6 Astra was used as a research assistant. The authors take full responsibility for the final manuscript.


\begin{thebibliography}{99}

\bibitem{BNRW}
P.~Bruillard, S.-H.~Ng, E.~C.~Rowell, and Z.~Wang,
\emph{Rank-finiteness for modular categories},
J. Amer. Math. Soc. \textbf{29} (2016), no.~3, 857--881.

\bibitem{CLS}
Y.~Choi, D.-C.~Lu, and Z.~Sun,
\emph{Self-duality under gauging a non-invertible symmetry},
J. High Energy Phys. 2024, no.~1, 142.



\bibitem{DMNO}
A.~Davydov, M.~M\"uger, D.~Nikshych, and V.~Ostrik,
\emph{The Witt group of non-degenerate braided fusion categories},
J. Reine Angew. Math. \textbf{677} (2013), 135--177.

\bibitem{DixonMortimer}
J.~D. Dixon and B.~Mortimer,
\emph{Permutation Groups}, Graduate Texts in Mathematics 163,
Springer, New York, 1996.


\bibitem{DLN}
C.~Dong, X.~Lin, and S.-H.~Ng,
\emph{Congruence property in conformal field theory},
Algebra Number Theory \textbf{9} (2015), no.~9, 2121--2166.

\bibitem{DS}
J.~Dong and A.~Schopieray,
\emph{Near-integral fusion}, preprint (2024), 
\href{https://arxiv.org/abs/2407.15955}{arXiv:2407.15955}.

\bibitem{EGNO}
P.~Etingof, S.~Gelaki, D.~Nikshych, and V.~Ostrik,
\emph{Tensor Categories}, Math. Surveys Monogr. 205,
American Mathematical Society, 2015.

\bibitem{EGNOcorr}
P.~Etingof, S.~Gelaki, D.~Nikshych, and V.~Ostrik,
\emph{Corrections to the book ``Tensor Categories''},
February 14, 2026, p.~5 (Example~7.12.26).

\bibitem{ENO}
P.~Etingof, D.~Nikshych, and V.~Ostrik, \emph{Weakly group-theoretical and solvable fusion categories}, Adv. Math. \textbf{226} (2011), 176--205.

\bibitem{ENOH}
P.~Etingof, D.~Nikshych, and V.~Ostrik,
\emph{Fusion categories and homotopy theory},
Quantum Topol. \textbf{1} (2010), no.~3, 209--273.

\bibitem{etingof2005fusion}
P.~Etingof, D.~Nikshych, and V.~Ostrik,
\emph{On fusion categories},
Ann. of Math. (2) \textbf{162} (2005), no.~2, 581--642.

\bibitem{GHR}
C.~Galindo, S.-M.~Hong, and E.~C.~Rowell,
\emph{Generalized and quasi-localizations of braid group representations},
Int. Math. Res. Not. IMRN 2013, no.~3, 693--731.

\bibitem{GN}
S.~Gelaki and D.~Naidu,
\emph{Some properties of group-theoretical categories},
J. Algebra \textbf{322} (2009), no.~8, 2631--2641.

\bibitem{Larson}
H.~K.~Larson,
\emph{Pseudo-unitary non-self-dual fusion categories of rank 4},
J. Algebra \textbf{415} (2014), 184--213.

\bibitem{Masuoka}
A.~Masuoka,
\emph{Semisimple Hopf algebras of dimension 6, 8},
Israel J. Math. \textbf{92} (1995), 361--373.

\bibitem{NS}
S.-H.~Ng and P.~Schauenburg,
\emph{Frobenius--Schur indicators and exponents of spherical categories},
Adv. Math. \textbf{211} (2007), no.~1, 34--71.

\bibitem{Ostrik}
V.~Ostrik,
\emph{Pivotal fusion categories of rank 3},
Mosc. Math. J. \textbf{15} (2015), no.~2, 373--396.

\bibitem{Shimizu}
K.~Shimizu,
\emph{Frobenius--Schur indicators in Tambara--Yamagami categories},
J. Algebra \textbf{332} (2011), 543--564.

\bibitem{Shimizu2017}
K.~Shimizu,
\emph{On unimodular finite tensor categories},
Int. Math. Res. Not. IMRN 2017, no.~1, 277--322.

\bibitem{Siehler}
J.~Siehler,
\emph{Braided near-group categories},
\href{https://arxiv.org/abs/math/0011037}{arXiv:math/0011037} (2000).

\bibitem{TY}
D.~Tambara and S.~Yamagami,
\emph{Tensor categories with fusion rules of self-duality for finite abelian groups},
J. Algebra \textbf{209} (1998), 692--707.

\bibitem{Vafa1988421}
C.~Vafa, \emph{Toward classification of conformal theories}, Phys. Lett. B \textbf{206}~(3)  (1988), 421 -- 426.

\end{thebibliography}
\end{document}